\DeclareFontFamily{U}{shuffle}{}
\DeclareFontShape{U}{shuffle}{m}{n}{ <-8>shuffle7 <8->shuffle10}{}
\newcommand{\A}{{\rm A}}
\newcommand{\ola}{\overleftarrow}
\newcommand{\ora}{\overrightarrow}
\newcommand{\bfj}{{\boldsymbol{\sl{j}}}}
\newcommand{\bfk}{{\boldsymbol{\sl{k}}}}
\newcommand{\bfa}{{\boldsymbol{\sl{a}}}}
\newcommand{\bfl}{{\boldsymbol{\sl{l}}}}
\newcommand{\bfm}{{\boldsymbol{\sl{m}}}}
\newcommand{\bfx}{{\boldsymbol{\sl{x}}}}
\newcommand{\bfz}{{\boldsymbol{\sl{z}}}}
\def\int{\displaystyle\!int}
\def\lim{\displaystyle\!lim}
\def\sum{\displaystyle\!sum}
\def\sup{\displaystyle\!sup}
\def\inf{\displaystyle\!inf}
\def\cap{\displaystyle\!cap}
\def\max{\displaystyle\!max}
\def\min{\displaystyle\!min}
\let\oldsection\section
\renewcommand\section{\setcounter{equation}{0}\oldsection}
\DeclareMathOperator*{\dep}{dep}
\DeclareMathOperator{\Li}{Li}
\def\R{\mathbb{R}}
\def\N{\mathbb{N}}
\def\Z{\mathbb{Z}}
\def\Q{\mathbb{Q}}
\def\ze{\zeta}
\theoremstyle{plain}
\newtheorem{thm}{Theorem}[section]
\newtheorem{lem}[thm]{Lemma}
\newtheorem{cor}[thm]{Corollary}
\theoremstyle{definition}
\newtheorem{re}[thm]{Remark}
\begin{document}
\title{\bf Parametric Ap\'{e}ry-type Series and Hurwitz-type Multiple Zeta Values}
\author{
{Masanobu Kaneko${}^{a,}$\thanks{Email: mkaneko@math.kyushu-u.ac.jp (M. Kaneko), ORCID 0000-0002-6658-7313},\ Weiping Wang${}^{b,}$\thanks{Email: wpingwang@zstu.edu.cn (W. Wang), ORCID 0009-0001-5162-0598}, \ Ce Xu${}^{c,}$\thanks{Email: cexu2020@ahnu.edu.cn (C. Xu),\ {\bf corresponding author}, ORCID 0000-0002-0059-7420}\ and\ Jianqiang Zhao${}^{d,}$\thanks{Email: zhaoj@ihes.fr (J. Zhao), ORCID 0000-0003-1407-4230}}\\[1mm]
\small a. Faculty of Mathematics, Kyushu University
744 Motooka, Nishi-ku, Fukuoka, 819-0395, Japan\\
\small b. School of Science, Zhejiang Sci-Tech University, Hangzhou 310018, P.R. China\\
\small c. School of Mathematics and Statistics, Anhui Normal University, Wuhu 241002, P.R. China\\
\small d. Department of Mathematics, The Bishop's School, La Jolla, CA 92037, United States of America
}

\date{}
\maketitle

\noindent{\bf Abstract.} In this paper, we extend the main results of a 2024 \emph{Advances in Applied Mathematics} paper \cite{XuZhao2021c} about Ap\'{e}ry-type series involving central binomial coefficients and the multiple ($t-$)harmonic sums to parametric Ap\'{e}ry-type series involving parametric binomial coefficients and Hurwitz-type multiple harmonic (star) sums. In particular, we will establish many explicit relations between parametric Ap\'{e}ry-type series involving one or two parametric binomial coefficients and Hurwitz-type multiple zeta values (with $r$-variables) by using the method of iterated integrals.

\medskip
\noindent{\bf Keywords}: Parametric Ap\'{e}ry-type series; (colored) multiple zeta values; Hurwitz-type multiple zeta values; Hurwitz-type multiple harmonic (star) sums; Hurwitz-type multiple $T$-values.

\noindent{\bf AMS Subject Classifications (2020):} 11M32, 11M99.

\section{Introduction}
We begin with some basic notations. Let $\N$, $\N^-$, $\Q$, $\R$ and $\mathbb{C}$ be the set of positive integers, negative integers, rational numbers, real numbers and complex numbers, respectively, and $\N_0:=\N\cup \{0\}$ and $\N_0^-:=\N^-\cup \{0\}$.
A finite sequence $\bfk:=(k_1,\ldots, k_r)\in\N^r$ is called a \emph{composition}. We put
\begin{equation*}
 |\bfk|:=k_1+\cdots+k_r,\quad \dep(\bfk):=r,
\end{equation*}
and call them the weight and the depth of $\bfk$, respectively. If $k_1>1$, $\bfk$ is called \emph{admissible}.
As a convention, we denote by $\{m\}_r$ the sequence of $m$'s with $r$ repetitions.

For a composition $\bfk=(k_1,\ldots,k_r)$ and positive integer $n$, the \emph{multiple harmonic sums} and \emph{multiple harmonic star sums} are defined by
\begin{align}
\zeta_n(\bfk):=\sum\limits_{n\geq n_1>\cdots>n_r>0 } \frac{1}{n_1^{k_1}\cdots n_r^{k_r}}\in \Q\quad
\text{and}\quad
\zeta^\star_n(\bfk):=\sum\limits_{n\geq n_1\geq\cdots\geq n_r>0} \frac{1}{n_1^{k_1}\cdots n_r^{k_r}}\in \Q\label{MHSs+MHSSs},
\end{align}
respectively. If $n<r$ then ${\zeta_n}(\bfk):=0$ and ${\zeta _n}(\emptyset )={\zeta^\star _n}(\emptyset ):=1$. When $\bfk$ is admissible, by taking the limit $n\rightarrow \infty$ in \eqref{MHSs+MHSSs} we get the \emph{multiple zeta values} (MZVs) and the \emph{multiple zeta star values}, respectively
\begin{align*}
{\zeta}( \bfk):=\lim_{n\rightarrow \infty}{\zeta _n}(\bfk)\in \R \quad
\text{and}\quad
{\zeta^\star}( \bfk):=\lim_{n\rightarrow \infty}{\zeta^\star_n}( \bfk)\in \R.
\end{align*}
When the depth is 1, we recover the classical Riemann zeta values $\zeta(n)$.

As is well known, the transcendence of $\zeta(2n+1)$ is still an unsolved problem. In 1979, Ap\'ery \cite{Apery1978} shocked the math world by showing that $\zeta(3)$ is irrational. His key idea is to use the series expansion of the form
\begin{equation}\label{equ:Apery3}
\zeta(3)=\frac52 \sum_{k=1}^\infty \frac{(-1)^{k+1}}{k^3\binom{2k}{k}}.
\end{equation}
We say a series is of \emph{Ap\'ery-type} if it is obtained from a (variant of the) MZV series by multiplying or dividing a factor of some binomial coefficient. In this paper, we will mainly study these series, but often with the binomial coefficient appearing in the numerator, contrary to what Ap\'ery did in \eqref{equ:Apery3} or the series
considered in \cite{BBB2006,BorweinBr1997}.

The systematic study of MZVs began in the early 1990s with the works of Hoffman \cite{H1992} and Zagier \cite{DZ1994}. Due to their surprising and sometimes mysterious appearance in the study of many branches of mathematics and theoretical physics, these special values have attracted a lot of attention and interest in the past three decades (for example, see the book by the last author \cite{Zhao2016}).

In general, let $\bfk=(k_1,\ldots,k_r)\in\N^r$ and $\bfz=(z_1,\dotsc,z_r)$ where $z_1,\dotsc,z_r$ are the $N$th roots of unity. We can define the \emph{colored MZVs} of level $N$ by
\begin{equation}\label{equ:defnMPL}
\Li_{\bfk}(\bfz):=\sum_{n_1>\cdots>n_r>0}
\frac{z_1^{n_1}\cdots z_r^{n_r}}{n_1^{k_1} \cdots n_r^{k_r}}\in \mathbb{C},
\end{equation}
which converges if $({\bfk};\bfz)$ is \emph{admissible}, that is, $(k_1,z_1)\ne (1,1)$ (see \cite[Ch. 15]{Zhao2016}). The level two colored MZVs are often called \emph{Euler sums} or \emph{alternating MZVs}. In this case, namely,
when $(z_1,\dotsc,z_r)\in\{\pm 1\}^r$ and $(k_1,z_1)\ne (1,1)$, we denote
$\ze(\bfk;\bfz)= \Li_\bfk(\bfz)$. Further, we put a bar on top of
$k_{j}$ if $z_{j}=-1$. For example,
\begin{equation*}
\ze(\bar2,3,\bar1,4)=\ze(2,3,1,4;-1,1,-1,1).
\end{equation*}
More generally, for any composition $(k_1,\dotsc,k_r)\in\N^r$, the \emph{multiple polylogarithm function} with $r$-variables is defined by
\begin{align}\label{equ:classicalLi}
\Li_{\bfk}(\bfx)\equiv \Li_{k_1,\dotsc,k_r}(x_1,\dotsc,x_r):=\sum_{n_1>\cdots>n_r>0} \frac{x_1^{n_1}\dotsm x_r^{n_r}}{n_1^{k_1}\dotsm n_r^{k_r}}
\end{align}
which converges if $|x_1\cdots x_j|<1$ for all $j=1,\dotsc,r$. They can be analytically continued to a multi-valued meromorphic function on $\mathbb{C}^r$ (see \cite{Zhao2007d}). In particular, if $x_1=x,x_2=\cdots=x_r=1$, then $\Li_{k_1,\ldots,k_r}(x,\{1\}_{r-1})$ is the single-variable multiple polylogarithm function $\Li_{k_1,\ldots,k_r}(x)$.

Kaneko and Tsumura \cite{KanekoTs2018b,KanekoTs2019} introduced the following new kind of multiple polylogarithm functions of level two
\begin{align}\label{b1}
{\rm A}(k_1,\dotsc,k_r;x): &= 2^r\sum\limits_{n_1> \cdots> n_r>0\atop n_j\equiv r+1-j\pmod{2}} {\frac{{{x^{{n_1}}}}}{{n_1^{{k_1}}n_2^{{k_2}} \cdots n_r^{{k_r}}}}}\nonumber\\
&=2^r \sum\limits_{m_1>\cdots>m_r>0}  {\frac{{{x^{{2m_1-r}}}}}{{(2m_1-r)^{{k_1}}(2m_2-r+1)^{{k_2}} \dotsm (2m_r-1)^{{k_r}}}}},
\end{align}
where $|x|\leq 1$, $k_1,\dotsc,k_r$ are positive integers and $(k_1,x)\neq (1,1)$. We call them \emph{Kaneko--Tsumura {\rm A}-functions}. Note that $2^{-r}{\rm A}(k_1,\dotsc,k_r;x)$ is denoted by ${\rm Ath}(k_1,\dotsc,k_r;x)$ in \cite{KanekoTs2018b}. Clearly, if $k_1>1$ and $x=1$,
then the Kaneko--Tsumura {\rm A}-functions in \eqref{b1} reduce to the \emph{multiple $T$-values} defined by
\begin{align}\label{Defn-MTVs}
T(k_1,\dotsc,k_r):&= \sum_{n_1>\cdots>n_r>0 \atop n_j\equiv r+1-j\pmod{2}} \frac{2^r}{n_1^{k_1}n_2^{k_2}\dotsm n_r^{k_r}}\nonumber\\
&=\sum\limits_{m_1>\cdots>m_r>0}  {\frac{2^r}{{(2m_1-r)^{{k_1}}(2m_2-r+1)^{{k_2}} \dotsm (2m_r-1)^{{k_r}}}}}.
\end{align}

Similarly to the multiple harmonic sums and multiple harmonic star sums, for a composition $\bfk=(k_1,\ldots,k_r)$ and positive integer $n$ with $\alpha\in \mathbb{C}\backslash \N_0^-$, we define the \emph{Hurwitz-type multiple harmonic sums} and \emph{Hurwitz-type multiple harmonic star sums} by
\begin{align}
\zeta_n(\bfk;\alpha):=\sum\limits_{n\geq n_1>\cdots>n_r>0 } \frac{1}{(n_1+\alpha-1)^{k_1}\cdots (n_r+\alpha-1)^{k_r}}\label{HTMHSs}
\end{align}
and
\begin{align}
\zeta^\star_n(\bfk;\alpha):=\sum\limits_{n\geq n_1\geq\cdots\geq n_r>0} \frac{1}{(n_1+\alpha-1)^{k_1}\cdots (n_r+\alpha-1)^{k_r}}\label{THMHSSs},
\end{align}
respectively. If $n<r$ then ${\zeta_n}(\bfk;\alpha):=0$ and ${\zeta _n}(\emptyset;\alpha)={\zeta^\star _n}(\emptyset;\alpha):=1$. Further, for $k_1>1$, we let
\begin{align}
\zeta(\bfk;\alpha):=\lim_{n\rightarrow \infty}{\zeta _n}(\bfk;\alpha) \quad\text{and}\quad
{\zeta^\star}( \bfk;\alpha):=\lim_{n\rightarrow \infty}{\zeta^\star_n}( \bfk;\alpha)\label{HTMZVs+HTMZSVs}
\end{align}
and call them \emph{Hurwitz-type multiple zeta values} \cite{MXY2021} and \emph{Hurwitz-type multiple zeta star values}, respectively. Similarly, for $k_1>1$ and $\alpha\in \mathbb{C}\backslash \N_0^-$, we define the \emph{Hurwitz-type multiple $T$-values} by
\begin{align}\label{Defn-HTMTVs}
T(\bfk;\alpha)&\equiv T(k_1,\dotsc,k_r;\alpha)\nonumber\\
:&=\sum\limits_{m_1>\cdots>m_r>0}  {\frac{2^r}{{(2m_1-r-1+\alpha)^{{k_1}}(2m_2-r+\alpha)^{{k_2}} \dotsm (2m_r-2+\alpha)^{{k_r}}}}}.
\end{align}
Clearly, $\zeta(\bfk;1)=\zeta(\bfk),\zeta^\star(\bfk;1)=\zeta^\star(\bfk)$ and $T(\bfk;1)=T(\bfk)$.

More generally, for a composition $\bfk=(k_1,\ldots,k_r)$, positive integer $n$ and $\bfa=(a_1,\ldots,a_r)\in \mathbb{C}^r$ with all $a_j\in \mathbb{C}\backslash \N_0^-$, we define the Hurwitz-type multiple harmonic sums and Hurwitz-type multiple harmonic star sums with $r$-variables by
\begin{align}
\zeta_n(\bfk;\bfa)\equiv\zeta_n(k_1,\ldots,k_r;a_1,\ldots,a_r):=\sum\limits_{n\geq n_1>\cdots>n_r>0 } \frac{1}{(n_1+a_1-1)^{k_1}\cdots (n_r+a_r-1)^{k_r}}\label{HTMHSs-r},
\end{align}
and
\begin{align}
\zeta^\star_n(\bfk;\bfa)\equiv\zeta_n^\star(k_1,\ldots,k_r;a_1,\ldots,a_r):=\sum\limits_{n\geq n_1\geq\cdots\geq n_r>0} \frac{1}{(n_1+a_1-1)^{k_1}\cdots (n_r+a_r-1)^{k_r}}\label{THMHSSs-r},
\end{align}
respectively. If $n<r$ then ${\zeta_n}(\bfk;\bfa):=0$ and ${\zeta _n}(\emptyset;\bfa)={\zeta^\star _n}(\emptyset;\bfa):=1$. Further, for $k_1>1$, let
\begin{align}
\zeta(\bfk;\bfa):=\lim_{n\rightarrow \infty}{\zeta _n}(\bfk;\bfa) \quad\text{and}\quad
{\zeta^\star}( \bfk;\bfa):=\lim_{n\rightarrow \infty}{\zeta^\star_n}( \bfk;\bfa),\label{HTMZVs+HTMZSVs-r}
\end{align}
and call them Hurwitz-type multiple zeta values and Hurwitz-type multiple zeta star values with $r$-variables, respectively. For convenience, if $a_1=\cdots=a_r=a$ in \eqref{HTMHSs-r}-\eqref{HTMZVs+HTMZSVs-r}, let $\zeta_n(\bfk;a):=\zeta_n(\bfk;\{a\}_r),\zeta^\star_n(\bfk;a):=\zeta^\star_n(\bfk;\{a\}_r),\zeta(\bfk;a):=\zeta(\bfk;\{a\}_r)$ and $\zeta^\star(\bfk;a):=\zeta^\star(\bfk;\{a\}_r)$.

In particular, if all $a_j=1/2$ then \eqref{HTMHSs-r} and \eqref{THMHSSs-r} essentially reduce to the
\emph{multiple t-harmonic sum} and \emph{multiple t-harmonic star sum} respectively defined by \cite{XuZhao2021c}
\begin{align*}
t_n(\bfk):=\sum_{n\geq n_1>\cdots>n_r>0} \prod_{j=1}^r \frac{1}{(2n_{j}-1)^{k_{j}}}\quad
\text{and}\quad
t^\star_n(\bfk):=\sum_{n\geq n_1\geq \cdots\geq n_r>0} \prod_{j=1}^r \frac{1}{(2n_{j}-1)^{k_{j}}}.
\end{align*}
If $n<r$ then ${t_n}(\bfk):=0$ and ${t_n}(\emptyset)={t^\star_n}(\emptyset ):=1$. When taking the limit $n\rightarrow \infty$ we get the so-called \emph{multiple $t$-values} and \emph{multiple $t$-star values}, respectively, see \cite{H2019}. Clearly, the multiple $T$-values \cite{KanekoTs2019} and \emph{multiple mixed values}  \cite{XuZhao2020a} are also special cases of the Hurwitz-type MZVs with $r$-variables. For example, letting $a_j=(j+1-r)/2\ (j=1,2,\ldots,r)$ in the first formula of \eqref{HTMZVs+HTMZSVs-r} gives the multiple $T$-value
\begin{align}
&\zeta\left(k_1,k_2,\ldots,k_r;1-\frac{r}{2},1-\frac{r-1}{2},\ldots,1-\frac{1}{2}\right)\nonumber\\
&=2^{k_1+k_2+\cdots+k_r} \sum\limits_{n_1>n_2>\cdots>n_r>0} \frac{1}{(2n_1-r)^{k_1}(2n_2-r+1)^{k_2}\cdots (2n_r-1)^{k_r}}\nonumber\\
&=2^{k_1+k_2+\cdots+k_r-r} T(k_1,k_2,\ldots,k_r).
\end{align}

In this paper, by considering several integrals involving multiple polylogarithm, we will find some explicit relations between the Hurwitz-type multiple zeta values and the following two parametric Ap\'{e}ry-type series
\begin{align}
&\sum_{n=1}^\infty \frac{\ze_{n-1}(k_2,\ldots,k_r)\ze_n^\star(\{1\}_k;1-\alpha)}{n^{k_1+1}\binom{n-\alpha}{n}}\label{Para-Apery-MHS-I},\\
&\sum_{n=1}^\infty \frac{\ze_n(\{1\}_k;\alpha)\ze^\star_n(k_2,\ldots,k_r)}{n^{k_1}} \binom{n+\alpha-1}{n},\label{Para-Apery-MHS-II}
\end{align}
where $(k,k_1,\ldots,k_r)\in \N^{r+1}$, $\Re(\alpha)<1$ with $\alpha \notin \N_0^-$, and the general binomial coefficient $\binom{a}{b}$ is denoted by
\[\binom{a}{b}:=\frac{\Gamma(a+1)}{\Gamma(b+1)\Gamma(a-b+1)}
    \quad\text{for}\ a,b,a-b\notin \N^-.\]
In particular, if $a=n-1/2$ and $b=n\ (n\in \N)$, we have
\[\binom{n-1/2}{n}=\frac{1}{4^n}\binom{2n}{n}.\]
Furthermore, we also study some special cases of the following Ap\'{e}ry-type series involving two parametric binomial coefficients
\begin{align}\label{Para-Apery-MHS-III}
\sum\limits_{n=1}^{\infty}\frac{\zeta_{n}(k_1,\ldots,k_r;\alpha)\zeta_{n}^{\star}(l_1,\ldots,l_p;1-\beta)}{n^{m+2}}
\frac{\binom{n+\alpha-1}{n}}{\binom{n-\beta}{n}}\,,\quad m\in\N_0.
\end{align}
Letting $(\alpha,\beta)\rightarrow (0,0)$ in \eqref{Para-Apery-MHS-III} and applying \eqref{Limit-mhns-para}, we can recover the Kaneko--Yamamoto multiple zeta value $\ze(\bfk\circledast \bfl^\star)$ \cite{KanekoYa2018} defined by
\begin{align}
\zeta({\bfk}\circledast{\bfl}^\star):&=\sum\limits_{0<m_r<\cdots<m_1=n_1\geq \cdots \geq n_p>0} \frac{1}{m_1^{k_1}\cdots m_r^{k_r}n_1^{l_1}\cdots n_p^{l_p}}\nonumber\\
&=\sum\limits_{n=1}^\infty \frac{\zeta_{n-1}(k_2,\ldots,k_r)\zeta^\star_n(l_2,\ldots,l_p)}{n^{k_1+l_1}},
\end{align}
where ${\bfk}=(k_1,\ldots,k_r)$ and ${\bfl}=(l_1,\ldots,l_p)$ are two compositions. It should be emphasized that Kaneko and Yamamoto presented a new ``integral $=$ series" type identity of MZVs, and conjectured that this identity is enough to describe all linear relations of MZVs over $\mathbb{Q}$ (see  \cite[Conj. 4.3]{KanekoYa2018}). It is obvious that, the Arakawa--Kaneko zeta values
\begin{align*}
\xi(p;{\bfk})=\sum\limits_{n=1}^\infty \frac{\zeta_{n-1}(k_2,\ldots,k_r)\zeta^\star_n(\{1\}_{p-1})}{n^{k_1+1}}=\zeta({\bfk}\circledast \{\underbrace{1,\ldots,1}_{p}\}^\star)
\end{align*}
are special cases of the Kaneko--Yamamoto MZVs (see \cite{Ku2010}), where $p,k_1,\ldots,k_r\in \N$.

The main results of the present paper can be used to extend some results on the Ap\'ery-type series studied in \cite{XuZhao2021c}.
See \cite{Akhilesh,Au2020,Au2022,XuZhao2022a,XuZhao2022b} and the references therein for other recent results on Ap\'ery-type series and colored multiple zeta values.

\begin{re} From the asymptotic expansion for the ratio of two gamma functions (see \cite[Sections 2.3 and 2.11]{Luke69.1}), we have
\begin{align*}
\frac{\Gamma(z+a)}{\Gamma(z+b)}=z^{a-b}\left(1+O\Big(\frac1{z}\Big) \right)\quad \text{for } |{\rm arg}(z)|\leq \pi-\varepsilon,\ \varepsilon>0,
    \ |z|\rightarrow \infty.
\end{align*}
Hence, for \eqref{Para-Apery-MHS-III}, by a direct calculation, we obtain
\begin{align*}
\frac{\binom{n+\alpha-1}{n}}{\binom{n-\beta}{n}}=\frac{\Gamma(1-\beta)}{\Gamma(\alpha)} \frac{1}{n^{1-\alpha-\beta}}\left(1+O\Big(\frac1{n}\Big) \right),\quad n \rightarrow \infty.
\end{align*}
Thus, for $m\in\N_0$ and complex numbers $\alpha\in \mathbb{C}\backslash \N_0^-$ and $\beta\in \mathbb{C}\backslash \N$ with $\Re(\alpha+\beta)<2$, the Ap\'{e}ry-type series involving two parametric binomial coefficients in \eqref{Para-Apery-MHS-III} are convergent.
\end{re}

\section{Ap\'ery-type series with one parametric binomial coefficient}\label{sec-mt}

In this section, we will use the iterated integrals to prove some interesting results involving Ap\'ery-type series with one parametric binomial coefficient.

\subsection{Iterated Integrals and Notations}\label{sec-mt}

The theory of iterated integrals was developed firstly by K.T. Chen in the 1960's \cite{KTChen1971,KTChen1977}. It has played important roles in the study of algebraic topology and algebraic geometry in the past half century. Its simplest form is
$$\int_{a}^b f_p(t)dtf_{p-1}(t)dt\cdots f_1(t)dt:=\int_{a<t_p<\cdots<t_1<b}f_p(t_p)f_{p-1}(t_{p-1})\cdots f_1(t_1)dt_1dt_2\cdots dt_p.$$
In particular, according to the definitions, for any (admissible) composition $\bfk:=(k_1,k_2,\ldots,k_r)$ and $\alpha \in \mathbb{C}\setminus \N^-$, we have
\begin{align}
&\Li_{\bfk}(x)=\int_0^x \left(\frac{dt}{1-t}\right)\left(\frac{dt}{t}\right)^{k_r-1}\cdots
\left(\frac{dt}{1-t}\right)\left(\frac{dt}{t}\right)^{k_1-1},\label{Eq-MPL-ItIn}\\
&\A(\bfk;x)=\int_0^x \left(\frac{2dt}{1-t^2}\right)\left(\frac{dt}{t}\right)^{k_r-1}\cdots
\left(\frac{2dt}{1-t^2}\right)\left(\frac{dt}{t}\right)^{k_1-1},\label{Eq-KTA-ItIn}\\
&\ze(\bfk;1+\alpha)=\int_0^1 \frac{t^\alpha dt}{1-t}\left(\frac{dt}{t}\right)^{k_r-1}\left(\frac{dt}{1-t}\right)\left(\frac{dt}{t}\right)^{k_{r-1}-1}\cdots
\left(\frac{dt}{1-t}\right)\left(\frac{dt}{t}\right)^{k_1-1},\label{HTMZVs-Iterated-Integeral}\\
&T(\bfk;1+\alpha)=\int_0^1 \frac{2t^\alpha dt}{1-t^2}\left(\frac{dt}{t}\right)^{k_r-1}\left(\frac{2dt}{1-t^2}\right)\left(\frac{dt}{t}\right)^{k_{r-1}-1}\cdots
\left(\frac{2dt}{1-t^2}\right)\left(\frac{dt}{t}\right)^{k_1-1}.\label{HTMTVs-Iterated-Integeral}
\end{align}

More generally, by the definition of Hurwitz-type MZVs, we have the following iterated integral expressions
\begin{align}
&\zeta(k_1,k_2,\ldots,k_r;a_1+1,a_2+1,\ldots,a_r+1)\nonumber\\
&\quad=\int_0^1 \frac{t^{a_r}dt}{1-t}\left( \frac{dt}{t}\right)^{k_r-1}\frac{t^{a_{r-1}-a_r}dt}{1-t}\left( \frac{dt}{t}\right)^{k_{r-1}-1}\cdots \frac{t^{a_{1}-a_2}dt}{1-t}\left( \frac{dt}{t}\right)^{k_{1}-1}
\end{align}
and
\begin{align}
&\zeta(k_1,k_2,\ldots,k_r;a_1+\cdots+a_r+1,a_2+\cdots+a_r+1,\ldots,a_r+1)\nonumber\\
&\quad=\int_0^1 \frac{t^{a_r}dt}{1-t}\left( \frac{dt}{t}\right)^{k_r-1}\frac{t^{a_{r-1}}dt}{1-t}\left( \frac{dt}{t}\right)^{k_{r-1}-1}\cdots \frac{t^{a_{1}}dt}{1-t}\left( \frac{dt}{t}\right)^{k_{1}-1}.
\end{align}

For convenience, for compositions $\bfk:=(k_1,k_2,\ldots,k_r)$ and $\bfj:=(j_1,j_2,\ldots,j_r)\in \N_0^r$, let $\overleftarrow{\bfk}:=(k_r,\ldots,k_2,k_1)$ be the reversal of $\bfk$ and
\[
B(\bfk;\bfj):=\prod_{i=1}^r\binom{k_i+j_i-1}{j_i}.
\]
Moreover, for two such $\bfk$  and $\bfj$ of the same depth, we denote by $\bfk+\bfj$ the composition obtained by adding the corresponding components.

The Hoffman dual of a composition $\bfk=(k_1,\ldots,k_r)$ is $\bfk^\vee=(k'_1,\ldots,k'_{r'})$ determined by
$|\bfk|:=k_1+\cdots+k_r=k'_1+\cdots+k'_{r'}$ and
\begin{equation*}
\{1,2,\ldots,|\bfk|-1\}
=\Big\{\sum\nolimits_{i=1}^{j} k_i\Big\}_{j=1}^{r-1}
 \coprod \Big\{\sum\nolimits_{i=1}^{j} k_i'\Big\}_{j=1}^{r'-1}.
\end{equation*}
Here $\coprod$ denotes the union of two disjoint sets. Equivalently, $\bfk^\vee$ can be obtained from $\bfk$ by swapping the commas ``,'' and the plus signs ``+'' in the expression
\begin{equation}\label{eq:HdualDef}
 \bfk=(\underbrace{1+\cdots+1}_{\text{$k_1$ times}},\dotsc,\underbrace{1+\cdots+1}_{\text{$k_r$ times}}).
\end{equation}
For example, we have
$({1,1,2,1})^\vee=(3,2)\quad\text{and}\quad ({1,2,1,1})^\vee=(2,3).$ More generally, one obtains
\begin{align}\label{eq:HdualDef2}
{\bfk}^\vee=(\underbrace{1,\ldots,1}_{k_1}+\underbrace{1,\ldots,1}_{k_2}+1,\ldots,1+\underbrace{1,\ldots,1}_{k_r}).
\end{align}
In particular, setting $\bfk_+:=(k_1+1,k_2,\ldots,k_r)$ we have
\[\Big( \overleftarrow{\bfk}^\vee\Big)_+=(1,k_r,k_{r-1},\ldots,k_1)^\vee,
\]
which is equal to the usual dual composition (see for instance \cite[Ch.~5]{Zhao2016} for the precise definition)
of $(k_1+1,k_2,\ldots, k_r)$.

\begin{lem}(\cite[Thm. 2.1]{KWXZ2024})\label{thm-MPL-MAL-Para}
For any composition $\bfk:=(k_1,k_2,\ldots,k_r)$, $k\in \N$, and any $\alpha\in \mathbb{C}\backslash \N$, the following explicit formulas for the Arakawa-Kaneko zeta-type values hold:
\begin{align}
&\int_0^1 \frac{\Li_{\bfk}(x)\log^k(1-x)}{x(1-x)^\alpha}dx\nonumber\\
&\quad=(-1)^kk!\sum_{|\bfj|=k,\dep(\bfj)=n} B\left(\Big( \overleftarrow{\bfk}^\vee\Big)_+;\bfj\right)\ze\left(\Big( \overleftarrow{\bfk}^\vee\Big)_++\bfj;1-\alpha\right),\label{EQ-MPL-LOG-A}\\
&\int_0^1 \frac{\A(\bfk;x)\log^k\left(\frac{1-x}{1+x}\right)}{x\left(\frac{1-x}{1+x}\right)^\alpha}dx\nonumber\\
&\quad=(-1)^kk!\sum_{|\bfj|=k,\dep(\bfj)=n} B\left(\Big( \overleftarrow{\bfk}^\vee\Big)_+;\bfj\right)T\left(\Big( \overleftarrow{\bfk}^\vee\Big)_++\bfj;1-\alpha\right),\label{EQ-MAL-LOG-A}
\end{align}
where $n:=|\bfk|+1-\dep(\bfk)$.
\end{lem}

\subsection{Main Results}

To save space, for any composition $\bfk=(k_1,\dotsc,k_r)\in\N^r$ and $i,j\in\N$, we put
\begin{align*}
&\ora\bfk_{\hskip-2pt i,j}:=
\left\{
  \begin{array}{cl}
    (k_i,\ldots,k_{j}),&\ \hbox{if $i\le j\le r$;} \\
    \emptyset, &\ \hbox{if $i>j$,}
  \end{array}
\right.
 \quad &\ola\bfk_{\hskip-2pt i,j}:=
\left\{
  \begin{array}{cl}
     (k_{j},\ldots,k_i),&\ \hbox{if $i\le j\le r$;} \\
     \emptyset, &\ \hbox{if $i>j$.}
  \end{array}
\right.
\end{align*}
Set $\ora\bfk_{\hskip-2pt i}=\ora\bfk_{\hskip-2pt 1,i}=(k_1,\ldots,k_i)$ and $\ola\bfk_{\hskip-2pt i}=\ola\bfk_{\hskip-2pt i,r}=(k_r,\ldots,k_i)$ for all $1\le i\le r$.

\begin{lem}(cf. \cite[Thm. 2.1]{XuZhao2020b}) Let $r,n\in \N$ and ${\bfk}:=(k_1,\dotsc,k_r)\in \N^r$ with $k_{r+1}:=2$. Then
\begin{align}\label{Eq-x-n-MPL-IN}
 \int_0^1 {x^{n-1}{\rm Li}_{\bfk}(x)dx}
 &=\sum\limits_{j=1}^{k_{1}-1}(-1)^{j-1}\frac{\zeta(k_1+j-1,\overrightarrow{\bfk}_{2,r})}{n^{j}}\nonumber\\&\quad
 +\sum\limits_{l=1}^{r}(-1)^{\overrightarrow{\mid{\bf k}_{l}\mid}-l}\sum\limits_{j=1}^{k_{l+1}-1}(-1)^{j-1}
\frac{\zeta_{n}^{\star}(\overrightarrow{\bfk}_{2,l},j)}{n^{k_{1}}}\zeta(k_{l+1}+1-j,\overrightarrow{\bfk}_{l+2,r}),
\end{align}
where $(k_{r+1}+1-j,\overrightarrow{\bfk}_{r+2,r}):=\emptyset$.
\end{lem}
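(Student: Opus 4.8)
The plan is to convert the integral into a convergent multiple series, isolate one partial fraction that lowers the depth by one, and then induct on $r$.

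Write $I_n(\bfk):=\int_0^1 x^{n-1}\Li_{\bfk}(x)\,dx$. Expanding $\Li_{\bfk}$ by its defining series \eqref{equ:classicalLi} and integrating term by term over $[0,1]$ — legitimate since all terms are nonnegative there and $I_n(\bfk)$ is finite ($\Li_{\bfk}(x)$ grows at most like a power of $\log\frac1{1-x}$ as $x\to1^-$) — gives
\[
I_n(\bfk)=\sum_{m_1>\cdots>m_r>0}\frac{1}{m_1^{k_1}\cdots m_r^{k_r}}\cdot\frac{1}{n+m_1}.
\]
The crux is the partial fraction decomposition
\[
\frac{1}{m^{k_1}(n+m)}=\sum_{j=1}^{k_1-1}\frac{(-1)^{j-1}}{n^{j}\,m^{k_1+1-j}}+\frac{(-1)^{k_1-1}}{n^{k_1}}\Bigl(\frac1m-\frac1{n+m}\Bigr),
\]
applied to the factor $1/(m_1^{k_1}(n+m_1))$. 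Summing the first group against $1/(m_2^{k_2}\cdots m_r^{k_r})$ over the simplex $m_1>m_2>\cdots>m_r>0$ produces $\sum_{j=1}^{k_1-1}(-1)^{j-1}n^{-j}\zeta(k_1+1-j,\overrightarrow{\bfk}_{2,r})$, a sum of admissible MZVs; in the last group the innermost sum telescopes, $\sum_{m_1>m_2}\bigl(\tfrac1{m_1}-\tfrac1{n+m_1}\bigr)=\sum_{i=1}^{n}\tfrac1{m_2+i}$, after which $\sum_{m_2>\cdots>m_r>0}(m_2+i)^{-1}m_2^{-k_2}\cdots m_r^{-k_r}=I_i(k_2,\ldots,k_r)$. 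This yields the depth-lowering recursion
\[
I_n(\bfk)=\sum_{j=1}^{k_1-1}\frac{(-1)^{j-1}}{n^{j}}\,\zeta(k_1+1-j,\overrightarrow{\bfk}_{2,r})+\frac{(-1)^{k_1-1}}{n^{k_1}}\sum_{i=1}^{n}I_i(k_2,\ldots,k_r)\qquad(r\ge2),
\]
while for $r=1$ the same decomposition already gives $I_n(k_1)=\sum_{j=1}^{k_1-1}(-1)^{j-1}n^{-j}\zeta(k_1+1-j)+(-1)^{k_1-1}n^{-k_1}\zeta_n^\star(1)$, since $\sum_{m\ge1}\bigl(\tfrac1m-\tfrac1{n+m}\bigr)=\zeta_n^\star(1)$. (Equivalently the recursion follows by integration by parts: for $k_1\ge2$ use $\tfrac{d}{dx}\Li_{\bfk}(x)=\tfrac1x\Li_{k_1-1,k_2,\ldots,k_r}(x)$ repeatedly to bring the leading index down to $1$, then integrate by parts once more with the antiderivative $(x^n-1)/n$ of $x^{n-1}$, the constant $-1$ being exactly what kills the boundary term against the logarithmic singularity of $\Li_{1,k_2,\ldots,k_r}$ at $x=1$.)

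Now induct on $r$. The case $r=1$ is the formula displayed just above, which is \eqref{Eq-x-n-MPL-IN} in depth one: the second sum has only $l=1$, $\overrightarrow{\bfk}_{2,1}=\emptyset$, the convention $k_2:=2$ forces $j=1$, and $\zeta(k_2+1-j,\overrightarrow{\bfk}_{3,1})=\zeta(\emptyset)=1$. For $r\ge2$, substitute the depth-$(r-1)$ instance of \eqref{Eq-x-n-MPL-IN} for $I_i(k_2,\ldots,k_r)$ into the recursion and interchange $\sum_{i=1}^{n}$ with the inner finite sums. The identity doing the work is $\sum_{i=1}^{n}i^{-k_2}\zeta_i^\star(\bfm)=\zeta_n^\star(k_2,\bfm)$, which promotes each depth-$(r-1)$ star multiple harmonic sum to the matching depth-$r$ one, the extra factor $n^{-k_1}$ carrying the power from $i^{k_2}$ over to $n^{k_1}$. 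Matching terms, the first sum of $I_i(k_2,\ldots,k_r)$ becomes the $l=1$ term of \eqref{Eq-x-n-MPL-IN} for $\bfk$ and its $l$-th second-sum term becomes the $(l+1)$-st; the sign prefactors reconcile after a one-line re-indexing, since prepending $k_1$ to a composition lengthens it by one and raises its partial weights by $k_1$.

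The part I expect to be the real obstacle is not the (admittedly tedious) index bookkeeping in the inductive step but the convergence issue that dictates the shape of the recursion: one must keep the last partial fraction in the combined form $\tfrac1m-\tfrac1{n+m}$. Both $\sum_{m_1>\cdots>m_r>0}(m_1m_2^{k_2}\cdots m_r^{k_r})^{-1}$ and $\sum_{m_1>\cdots>m_r>0}((n+m_1)m_2^{k_2}\cdots m_r^{k_r})^{-1}$ are divergent $\zeta(1,k_2,\ldots,k_r)$-type series, and it is precisely their mutual cancellation that simultaneously restores summability, produces the finite range $1\le i\le n$, and performs the Hurwitz-type shift $m_2\mapsto m_2+i$ that regenerates a genuine lower-depth copy $I_i(k_2,\ldots,k_r)$. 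On the integral side this is exactly why one cannot integrate by parts directly off a leading index $1$ without first correcting the antiderivative by a constant.
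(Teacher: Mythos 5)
Your argument is correct, and it is essentially the standard proof of this identity: the paper itself gives no proof but quotes the lemma from \cite{XuZhao2020b}, where the derivation runs along the same lines (series expansion, the partial-fraction identity for $1/(m^{k_1}(n+m))$ with the last two terms kept combined for convergence, and recursion on the depth). Every step of yours checks out: the term-by-term integration is justified by positivity, the telescoping $\sum_{m_1>m_2}\bigl(\tfrac1{m_1}-\tfrac1{n+m_1}\bigr)=\sum_{i=1}^{n}\tfrac1{m_2+i}$ does regenerate $I_i(k_2,\ldots,k_r)$, the base case $r=1$ is right, and the bookkeeping identity $\sum_{i=1}^{n}i^{-k_2}\zeta_i^\star(\bfm)=\zeta_n^\star(k_2,\bfm)$ together with $(-1)^{k_1-1}(-1)^{(k_2+\cdots+k_l)-(l-1)}=(-1)^{|\ora\bfk_l|-l}$ closes the induction. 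One important remark: your first sum reads $\zeta(k_1+1-j,\ora\bfk_{2,r})$, whereas the statement as printed has $\zeta(k_1+j-1,\ora\bfk_{2,r})$; these agree only at $j=1$, and it is your version that is correct. The printed form fails the weight check (its $j$-th term has weight $|\bfk|+2j-1$ rather than the uniform $|\bfk|+1$), and the direct computation $\int_0^1 x^{n-1}\Li_3(x)\,dx=\zeta(3)/n-\zeta(2)/n^2+\zeta_n(1)/n^3$ matches $\zeta(k_1+1-j)$ and not $\zeta(k_1+j-1)$. So the displayed statement contains a typo (the source \cite{XuZhao2020b} has $k_1+1-j$), and what you have proved is the intended, correct formula.
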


\begin{lem}(cf. \cite[Thm. 4.1]{Xu2017})\label{lem-S-1}
Define two sequences ${A_m(n)}$ and ${B_m(n)}$ by
$$ A_m(n) = (m-1)!\underset{i = 0}{\overset{m-1}{\sum}}\dfrac{A_i(n)}{i!}\underset{k = 1}{\overset{n}{\sum}}
x_k^{m-i},\quad A_0(n) = 1,\quad \left( {{x_k} \in \mathbb{C},k = 1,2, \cdots ,n} \right),$$
$$ B_m(n) = \underset{k_1 = 1}{\overset{n}{\sum}}x_{k_1}\underset{k_2 = 1}{\overset{k_1}{\sum}}x_{k_2}\cdots\underset{k_m = 1}{\overset{k_{m-1}}{\sum}}x_{k_m},\quad B_0(n) = 1, \quad \left( {{x_k} \in \mathbb{C},k = 1,2, \cdots ,n} \right).$$
Then \[A_m(n) = m!B_m(n).\]
\end{lem}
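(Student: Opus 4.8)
The plan is to observe that, after dividing by $m!$, the recursion defining $A_m(n)$ is exactly the Newton--Girard recursion characterizing the complete homogeneous symmetric polynomials in $x_1,\dots,x_n$, which is precisely what $B_m(n)$ computes. Concretely, introduce the power sums $p_j:=\sum_{k=1}^n x_k^j$ and set $a_m:=A_m(n)/m!$. Substituting $A_i(n)=i!\,a_i$ into the defining relation and dividing by $m!$ gives, after the reindexing $j=m-i$,
\[
m\,a_m=\sum_{i=0}^{m-1}a_i\,p_{m-i}=\sum_{j=1}^{m}p_j\,a_{m-j},\qquad a_0=1.
\]
Hence it suffices to prove that $B_m(n)$ satisfies the same recursion $m\,B_m(n)=\sum_{j=1}^m p_j\,B_{m-j}(n)$ with $B_0(n)=1$; a strong induction on $m$ then forces $a_m=B_m(n)$ for all $m$, i.e. $A_m(n)=m!\,B_m(n)$.

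For the recursion on $B_m(n)$, note that by its definition as the sum over $n\ge k_1\ge\cdots\ge k_m\ge 1$, $B_m(n)$ is the sum of all degree-$m$ monomials in $x_1,\dots,x_n$ (repetitions allowed), each with coefficient $1$. Fix such a monomial $x_{i_1}^{e_1}\cdots x_{i_s}^{e_s}$ with $e_t\ge 1$ and $e_1+\cdots+e_s=m$. On the right-hand side, a copy of it occurs in $p_j\,B_{m-j}(n)$ exactly once for each pair $(t,j)$ with $1\le t\le s$ and $1\le j\le e_t$: the factor $p_j$ supplies $x_{i_t}^{\,j}$ and the remaining degree-$(m-j)$ monomial is supplied, with coefficient $1$, by $B_{m-j}(n)$. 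So the coefficient on the right is $\sum_{t=1}^s e_t=m$, matching $m$ times the coefficient of the monomial on the left. (Equivalently, this follows at once from logarithmic differentiation of the generating identity $\sum_{m\ge 0}B_m(n)t^m=\prod_{k=1}^n(1-x_kt)^{-1}$.) Since both manipulations are purely formal, the final identity, being an identity of polynomials in $x_1,\dots,x_n$, holds for all complex values of the $x_k$.

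There is no real obstacle here; the only points needing care are the two bookkeeping steps — matching $\sum_{i=0}^{m-1}$ with $\sum_{j=1}^{m}$ in the $A$-recursion, and verifying in the $B$-recursion that $j$ may run all the way up to $e_t$, which still leaves a legitimate (possibly empty) degree-$(m-j)$ monomial for $B_{m-j}(n)$. Everything else is the routine induction described above.
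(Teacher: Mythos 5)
Your proof is correct. The paper itself gives no argument for this lemma---it is quoted from \cite[Thm.\ 4.1]{Xu2017} without proof---so there is no in-paper derivation to compare against; judged on its own, your argument is complete. The two key observations are both right: dividing the $A$-recursion by $m!$ turns it into $m\,a_m=\sum_{j=1}^m p_j a_{m-j}$ with $a_0=1$, and $B_m(n)$ is precisely the complete homogeneous symmetric polynomial $h_m(x_1,\dots,x_n)$ (the sum is over $n\ge k_1\ge\cdots\ge k_m\ge 1$), which satisfies the same Newton--Girard-type recursion; your coefficient count $\sum_{j=1}^{m}\#\{t:e_t\ge j\}=\sum_t e_t=m$ is the standard verification, and the generating-function remark $\sum_{m\ge0}B_m(n)t^m=\prod_{k=1}^n(1-x_kt)^{-1}$ gives the same identity in one line. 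Uniqueness of the solution of the recursion by strong induction then finishes the proof. As a side remark, the same template proves the companion Lemma~\ref{lem-S-2} of the paper, with $h_m$ replaced by the elementary symmetric polynomials and the alternating signs coming from $\prod_k(1+x_kt)$ in place of $\prod_k(1-x_kt)^{-1}$.
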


\begin{lem}(cf. \cite[Thm. 4.2]{Xu2017})\label{lem-S-2}
Define two sequences ${{\bar A}_m(n)}$ and ${{\bar B}_m(n)}$ by
$${\bar A}_m(n) = (m-1)!(-1)^{m-1}\underset{i = 0}{\overset{m-1}{\sum}}(-1)^{i}\dfrac{{\bar A}_i(n)}{i!}\underset{k = 1}{\overset{n}{\sum}}x_k^{m-i},\quad {\bar A}_0(n)=1,$$
$${\bar B}_m(n) = \underset{k_1 = 1}{\overset{n}{\sum}}x_{k_1}\underset{k_2 = 1}{\overset{k_1-1}{\sum}}x_{k_2}\cdots\underset{k_m = 1}{\overset{k_{m-1}-1}{\sum}}x_{k_m},\quad {\bar B}_0(n) = 1.$$
Then \[{\bar A}_m(n) = m!{\bar B}_m(n).\]
\end{lem}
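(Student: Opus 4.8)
The plan is to identify $\bar B_m(n)$ with the $m$-th elementary symmetric polynomial $e_m(x_1,\ldots,x_n)$: the nested sum over $n\ge k_1>k_2>\cdots>k_m\ge 1$ is exactly a sum over $m$-element subsets $\{k_1,\ldots,k_m\}\subseteq\{1,\ldots,n\}$, so that $\bar B_m(n)=\sum_{1\le j_1<\cdots<j_m\le n}x_{j_1}\cdots x_{j_m}$. With this reading, the asserted identity $\bar A_m(n)=m!\,\bar B_m(n)$ is a repackaging of Newton's identities relating the power sums $S_j:=\sum_{k=1}^n x_k^{j}$ to the $e_m$, and I would establish it by a generating-function comparison.

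First I would set $E(t):=\prod_{k=1}^n(1+x_kt)$, so that $E(t)=\sum_{m\ge 0}\bar B_m(n)\,t^m$, and record the logarithmic-derivative identity
$$\frac{E'(t)}{E(t)}=\sum_{k=1}^n\frac{x_k}{1+x_kt}=\sum_{k=1}^n x_k\sum_{j\ge 1}(-x_kt)^{j-1}=\sum_{j\ge 1}(-1)^{j-1}S_j\,t^{j-1},$$
valid as an identity of formal power series in $t$.

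Next, set $F(t):=\sum_{m\ge 0}\frac{\bar A_m(n)}{m!}\,t^m$, so that $F(0)=\bar A_0(n)=1$. Dividing the recursion defining $\bar A_m(n)$ by $m!$, multiplying through by $m$, using $(-1)^{m-1}(-1)^i=(-1)^{m-1-i}$, and substituting $j=m-i$ rewrites it as
$$m\cdot\frac{\bar A_m(n)}{m!}=\sum_{j=1}^{m}(-1)^{j-1}S_j\,\frac{\bar A_{m-j}(n)}{(m-j)!}\qquad(m\ge 1),$$
which is precisely the statement that $F'(t)=F(t)\sum_{j\ge 1}(-1)^{j-1}S_j\,t^{j-1}$. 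Combining this with the identity from the previous paragraph gives $F'(t)/F(t)=E'(t)/E(t)$, and since $F(0)=E(0)=1$ we conclude $F(t)=E(t)$, hence $\bar A_m(n)=m!\,\bar B_m(n)$ for every $m\ge 0$.

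A generating-function-free alternative — probably the argument behind \cite[Thm.~4.2]{Xu2017} — proceeds by double induction: induct on $m$, feed the inductive hypothesis $\bar A_i(n)=i!\,\bar B_i(n)$ for $i<m$ into the recursion for $\bar A_m(n)$, and reduce the goal to Newton's identity $m\,\bar B_m(n)=\sum_{j=1}^m(-1)^{j-1}\bar B_{m-j}(n)\,S_j$, which follows from a nested induction on $n$ via the Pascal-type relation $\bar B_m(n)=\bar B_m(n-1)+x_n\,\bar B_{m-1}(n-1)$. The only delicate point in either approach is sign bookkeeping — checking that the alternating signs built into the definition of $\bar A_m(n)$ align, after the index change $i\mapsto m-i$, with those of Newton's identity, and that the conventions $\bar A_0=\bar B_0=1$ are respected. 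The companion Lemma~\ref{lem-S-1} is handled identically, with $E(t)$ replaced by $\prod_{k=1}^n(1-x_kt)^{-1}=\sum_{m\ge 0}h_m(x_1,\ldots,x_n)\,t^m$ (complete homogeneous symmetric functions), which explains the absence of sign factors in the recursion defining $A_m(n)$ there.
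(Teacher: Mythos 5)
Your proof is correct: the identification $\bar B_m(n)=e_m(x_1,\ldots,x_n)$ is right (the nested sums enforce $n\ge k_1>k_2>\cdots>k_m\ge 1$), the index shift $j=m-i$ turns the defining recursion into $m\,\bar A_m(n)/m!=\sum_{j=1}^m(-1)^{j-1}S_j\,\bar A_{m-j}(n)/(m-j)!$ exactly as you claim, and the comparison $F'/F=E'/E$ with $F(0)=E(0)=1$ pins down $F=E$ since the first-order recursion determines the coefficients uniquely. The paper itself gives no proof of this lemma --- it is only quoted from \cite[Thm.\ 4.2]{Xu2017} --- so there is nothing to compare against; your generating-function argument (essentially Newton's identities for the elementary symmetric polynomials) is a clean, self-contained justification, and your remark about the companion Lemma \ref{lem-S-1} via $\prod_k(1-x_kt)^{-1}$ is likewise accurate.
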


\begin{thm}\label{thm-More-Gen-Fun-1-x-a} For any $k\in\N$ and $\alpha\in \mathbb{C}\backslash \N_0^-$ with $x\in (-1,1)$, we have
\begin{align}\label{eq-More-Gen-Fun-1-x-a}
&\frac{(-1)^{k}}{k!}\frac{\log^{k}(1-x)}{(1-x)^{\alpha}}=\sum\limits_{n=1}^{\infty}\binom{n+\alpha-1}{n}\zeta_{n}(\{1\}_{k};\alpha)x^{n}.
\end{align}
\end{thm}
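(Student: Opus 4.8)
The plan is to recognize the left-hand side of \eqref{eq-More-Gen-Fun-1-x-a} as the $k$-th derivative in the parameter $\alpha$ of the Newton binomial series for $(1-x)^{-\alpha}$, and then to identify the resulting coefficients with $\zeta_n(\{1\}_k;\alpha)$ by reading them off as Taylor coefficients of a shifted Pochhammer symbol. For $|x|<1$ one has the absolutely convergent expansion
\begin{align*}
\frac{1}{(1-x)^\alpha}=\sum_{n=0}^\infty \binom{n+\alpha-1}{n}x^n ,
\end{align*}
and since $\frac{\partial}{\partial\alpha}(1-x)^{-\alpha}=-\log(1-x)\,(1-x)^{-\alpha}$, iterating gives $\frac{\partial^k}{\partial\alpha^k}(1-x)^{-\alpha}=(-\log(1-x))^k(1-x)^{-\alpha}$. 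Hence the left-hand side equals $\frac{1}{k!}\,\partial_\alpha^k (1-x)^{-\alpha}$.

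First I would differentiate the Newton series termwise $k$ times with respect to $\alpha$. This is legitimate on compact subsets of $\{|x|<1\}\times(\mathbb{C}\setminus\N_0^-)$: the coefficients $\binom{n+\alpha-1}{n}=(\alpha)_n/n!$ with $(\alpha)_n:=\prod_{j=0}^{n-1}(\alpha+j)$ are entire in $\alpha$ and, together with their $\alpha$-derivatives, grow only polynomially in $n$ locally uniformly in $\alpha$, so the series and all its $\alpha$-derivatives converge locally uniformly. This yields
\begin{align*}
\frac{(-1)^k\log^k(1-x)}{k!\,(1-x)^\alpha}=\sum_{n=0}^\infty \frac{1}{k!}\frac{\partial^k}{\partial\alpha^k}\binom{n+\alpha-1}{n}\,x^n .
\end{align*}

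The heart of the matter is the evaluation of $\tfrac1{k!}\partial_\alpha^k\binom{n+\alpha-1}{n}$. I would use the factorization
\begin{align*}
\frac{(\alpha+t)_n}{(\alpha)_n}=\prod_{j=0}^{n-1}\Bigl(1+\frac{t}{\alpha+j}\Bigr)=\sum_{k=0}^n e_k\!\left(\tfrac{1}{\alpha},\tfrac{1}{\alpha+1},\dots,\tfrac{1}{\alpha+n-1}\right)t^k=\sum_{k=0}^n \zeta_n(\{1\}_k;\alpha)\,t^k ,
\end{align*}
where the last equality is just the definition \eqref{HTMHSs}: the strict inequalities $n\ge n_1>\cdots>n_k>0$ select $k$ distinct denominators among $\alpha,\alpha+1,\dots,\alpha+n-1$, so $\zeta_n(\{1\}_k;\alpha)$ is the $k$-th elementary symmetric function of $1/\alpha,\dots,1/(\alpha+n-1)$ (and is $0$ once $k>n$). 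On the other hand $(\alpha+t)_n$ is a polynomial of degree $n$ in $t$, so Taylor's formula at $t=0$ gives $(\alpha+t)_n=\sum_{k\ge0}\frac{1}{k!}\bigl(\partial_\alpha^k(\alpha)_n\bigr)t^k$. Comparing coefficients of $t^k$ gives $\frac1{k!}\partial_\alpha^k(\alpha)_n=(\alpha)_n\,\zeta_n(\{1\}_k;\alpha)$, i.e.
\begin{align*}
\frac{1}{k!}\frac{\partial^k}{\partial\alpha^k}\binom{n+\alpha-1}{n}=\binom{n+\alpha-1}{n}\,\zeta_n(\{1\}_k;\alpha).
\end{align*}

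Substituting this into the differentiated series proves \eqref{eq-More-Gen-Fun-1-x-a}; the $n=0$ summand contributes $\tfrac1{k!}\partial_\alpha^k 1=0$ for $k\in\N$, which is why the sum may start at $n=1$ (and the summands with $1\le n<k$ vanish automatically, consistent with $\zeta_n(\{1\}_k;\alpha)=0$ there). I expect the only genuinely delicate point to be the justification of termwise differentiation in the parameter $\alpha$; everything else reduces to the elementary symmetric-function identity above. An alternative route, avoiding differentiation, would be to expand $\frac{(-\log(1-x))^k}{k!}$ and $(1-x)^{-\alpha}$ separately and take their Cauchy product, but one would still have to perform essentially the same symmetric-function bookkeeping to collapse the convolution into $\binom{n+\alpha-1}{n}\zeta_n(\{1\}_k;\alpha)$, so the derivative-in-$\alpha$ argument seems cleaner.
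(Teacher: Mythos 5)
Your proof is correct, and while it shares the paper's overall skeleton (both reduce the claim to the identity $\tfrac{1}{k!}\partial_\alpha^k(\alpha)_n=(\alpha)_n\,\zeta_n(\{1\}_k;\alpha)$ after recognizing the left-hand side as $\tfrac{1}{k!}\partial_\alpha^k(1-x)^{-\alpha}$), you establish that key identity by a genuinely different and cleaner route. The paper derives a recurrence for $\partial_\alpha^k(\alpha)_n$ by repeatedly differentiating via polygamma functions, derives a parallel recurrence for $\zeta_n(\{1\}_k;\alpha)$ from its Lemma 3.3 (a Newton-type identity relating power sums $\zeta_n(k-i;\alpha)$ to the nested sums), and concludes by matching the two recursions inductively. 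You instead observe that $(\alpha+t)_n/(\alpha)_n=\prod_{j=0}^{n-1}\bigl(1+\tfrac{t}{\alpha+j}\bigr)$ is the generating polynomial of the elementary symmetric functions of $1/\alpha,\dots,1/(\alpha+n-1)$, which are precisely $\zeta_n(\{1\}_k;\alpha)$ by definition, and compare with the Taylor expansion of the degree-$n$ polynomial $(\alpha+t)_n$ in $t$. This is a one-line closed-form argument that bypasses both the polygamma machinery and Lemma 3.3, and it makes transparent why the nested sums appear (they are elementary symmetric functions) and why $\zeta_n(\{1\}_k;\alpha)=0$ for $k>n$. Your justification of termwise differentiation is also sound, and in fact can be shortened: since each partial sum is holomorphic in $\alpha$ and the series converges locally uniformly on $\{|x|\le r\}\times K$ for compact $K\subset\mathbb{C}\setminus\N_0^-$, Weierstrass's theorem already gives locally uniform convergence of all $\alpha$-derivatives. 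The paper, by contrast, works formally with the Cauchy product of \eqref{Eq-GF-Log-k} and \eqref{eq-Gen-Fun-1-x-a} and does not dwell on this point.
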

\begin{proof}
Note that
\begin{align}\label{eq-Gen-Fun-1-x-a}
\frac{1}{(1-x)^{\alpha}}=1+\sum\limits_{n=1}^{\infty}\frac{(\alpha)_{n}}{n!}x^{n}
    =1+\sum\limits_{n=1}^{\infty}\binom{n+\alpha-1}{n}x^{n}\,,\quad x\in(-1,1),
\end{align}
where $(\alpha)_n$ represents the \emph{Pochhammer symbol} (or the \emph{shifted factorial}) given
by
\begin{equation}\label{equ:Pochhammer}
(\alpha)_n=\alpha(\alpha+1)\cdots(\alpha+n-1)\quad \text{and}\quad (\alpha)_0:=1.
\end{equation}
From \cite{Xu2017}, we have
\begin{align}\label{Eq-GF-Log-k}
\log^k(1-x)=(-1)^kk!\sum_{n=1}^\infty \frac{\ze_{n-1}(\{1\}_{k-1})}{n}x^n
    \,,\quad x\in [-1,1).
\end{align}
By the \emph{Cauchy product}, we obtain
\begin{align}\label{eq-coffic-gen-fun}
\frac{(-1)^{k}}{k!}\frac{\log^{k}(1-x)}{(1-x)^{\alpha}}
&=\sum\limits_{n=1}^{\infty}\left\{\frac{\zeta_{n-1}(\{1\}_{k-1})}{n}
    +\sum\limits_{i+j=n\atop i,j\geq1}
        \frac{\zeta_{i-1}(\{1\}_{k-1})}{i}\cdot
        \frac{(\alpha)_{j}}{j!}\right\}\cdot x^{n}\nonumber\\
&=\frac{1}{k!}\frac{\partial^{k}}{\partial\alpha^{k}}\cdot\frac{1}{(1-x)^{\alpha}}
=\frac{1}{k!}\sum\limits_{n=0}^{\infty}\frac{\partial^{k}(\alpha)_{n}}{\partial\alpha^{k}}\frac{x^{n}}{n!}.
\end{align}
Then,
\begin{align}\label{eq-coffic-sum}
\frac{\zeta_{n-1}(\{1\}_{k-1})}{n}+\sum\limits_{i+j=n\atop i,j\geq1}\frac{\zeta_{i-1}(\{1\}_{k-1})}{i}\cdot\frac{(\alpha)_{j}}{j!}
=\frac{1}{k!n!}\frac{\partial^{k}(\alpha)_{n}}{\partial\alpha^{k}}=:a_{n}^{(\alpha)}(k).
\end{align}

By a simple calculation, $\frac{\partial^k(\alpha)_n}{\partial \alpha^k}$ satisfy a recurrence relation in the form
\begin{align}\label{Eq-CF-1}
\frac{{{\partial ^k}{{\left(\alpha\right)}_n}}}{{\partial {\alpha^k}}} =
\sum\limits_{i = 0}^{k-1} \binom{k-1}{i}\frac{\partial^i(\alpha)_n}{\partial \alpha^i}
\left[\psi^{(k -i-1)}(\alpha + n)-\psi^{(k-i-1)}(\alpha) \right]
    \,,\quad\text{for } k\in \N.
\end{align}
Here, $\psi ^{(m)}(x)$ stands for the polygamma function of order $m$ defined by
\[
\psi^{(m)}(x):=\frac{d^m}{dx^m}\psi(x) = \frac{d^{m+1}}{d x^{m+1}}\log \Gamma (x).
\]
Thus
\[{\psi ^{\left( 0 \right)}}(x) = \psi (x) = \frac{{\Gamma '(x)}}{{\Gamma (x)}}\]
holds, where $\psi (x)$ is the digamma function. The functions ${\psi ^{(m)}}(x)$ satisfy the following relations:
\begin{align*}
\psi (x)
    &= -\gamma  + \sum\limits_{k = 0}^\infty
    \left(\frac{1}{k + 1}-\frac{1}{k + x}\right),\\
{\psi ^{\left( n \right)}}(x)
    &= {\left( {-1} \right)^{n + 1}}n!\sum\limits_{k = 0}^\infty  \frac{1}{(x + k)^{n + 1}},\\
\psi \left( {x + n} \right)
    &= \frac{1}{x} + \frac{1}{{x + 1}} +  \cdots  + \frac{1}{{x + n-1}}
    + \psi (x),
\end{align*}
for $x\notin  \N^-_0$ and $n \in \N$. Here, $\gamma$ denotes the \emph{Euler-Mascheroni constant}, defined by
\[\gamma  := \mathop {\lim }\limits_{n \to \infty } \left( {\sum\limits_{k = 1}^n {\frac{1}{k}} -\log n} \right) = -\psi \left( 1 \right) \approx {\rm{ 0 }}{\rm{. 577215664901532860606512 }}....\]

From \eqref{Eq-CF-1} we have
\begin{align*}
\frac{\partial^{k}(\alpha)_{n}}{\partial \alpha^{k}}
&=\sum\limits_{i=0}^{k-1}\binom{k-1}{i}\cdot\frac{\partial^{i}(\alpha)_{n}}{\partial \alpha^{i}}\cdot
[\psi^{(k-i-1)}(\alpha+n)-\psi^{(k-i-1)}(\alpha)]\\
&=\sum\limits_{i=0}^{k-1}\binom{k-1}{i}\cdot\frac{\partial^{i}(\alpha)_{n}}{\partial \alpha^{i}}\cdot(-1)^{k-i}(k-i-1)!
\sum\limits_{m=0}^{\infty}\left\{\frac{1}{(m+\alpha+n)^{k-i}}-\frac{1}{(m+\alpha)^{k-i}}\right\}\\
&=\sum\limits_{i=0}^{k-1}\frac{(k-1)!}{i!(k-i-1)!}\cdot\frac{\partial^{i}(\alpha)_{n}}{\partial \alpha^{i}}\cdot
(-1)^{k-i}(k-i-1)!(-1)\sum\limits_{m=0}^{n-1}\frac{1}{(m+\alpha)^{k-i}}\\
&=(k-1)!(-1)^{k-1}\sum\limits_{i=0}^{k-1}(-1)^{i}\cdot\frac{1}{i!}\cdot\frac{\partial^{i}(\alpha)_{n}}{\partial \alpha^{i}}\cdot\zeta_{n}(k-i;\alpha)\,.
\end{align*}
Hence, we obtain the recurrence relation
\begin{align}\label{recurre-eq-Phs}
\frac{1}{k!}\cdot\frac{\partial^{k}(\alpha)_{n}}{\partial\alpha^{k}}=\frac{(-1)^{k-1}}{k}\sum\limits_{i=0}^{k-1}(-1)^{i}\cdot\frac{1}{i!}
\cdot\frac{\partial^{i}(\alpha)_{n}}{\partial \alpha^{i}}\cdot\zeta_{n}(k-i;\alpha).
\end{align}
In particular, if $k=0$ then $(\alpha)_{n}=\alpha(\alpha+1)\cdots(\alpha+n-1)=\frac{\Gamma(\alpha+n)}{\Gamma(\alpha)}$.

In Lemma \ref{lem-S-2}, letting $x_{k}={1}/(k+\alpha-1)$ yields
\begin{align}\label{recurre-eq-mhss}
\zeta_{n}(\{1\}_{m};\alpha)
    =\frac{(-1)^{m-1}}{m}\sum\limits_{i=0}^{m-1}(-1)^{i}\zeta_{n}(\{1\}_{i};\alpha)\zeta_{n}(m-i;\alpha).
\end{align}
Multiplying \eqref{recurre-eq-mhss} by $(\alpha)_n$, then comparing it with \eqref{recurre-eq-Phs}, we obtain $\frac{\partial^{k}(\alpha)_{n}}{\partial \alpha^{k}}=k!(\alpha)_{n}\zeta_{n}(\{1\}_{k};\alpha)$. Thus, we get
\begin{align}\label{eq-exp-for-coffic-gen-fun}
a_{n}^{(\alpha)}(k)=\frac{k!}{k!n!}(\alpha)_{n}\zeta_{n}(\{1\}_{k};\alpha)
=\frac{(\alpha)_{n}}{n!}\zeta_{n}(\{1\}_{k};\alpha)=\binom{n+\alpha-1}{n}\zeta_{n}(\{1\}_{k};\alpha).
\end{align}
Combining \eqref{eq-coffic-gen-fun}, \eqref{eq-coffic-sum} and \eqref{eq-exp-for-coffic-gen-fun}, we obtain the desired formula.
\end{proof}

\begin{thm}\label{thm-Int-log-aphla-bc} For any $k\in \N_0,n\in \N$ and $\alpha\in \mathbb{C}$ with $\Re(\alpha)<1$, we have
\begin{align}\label{eq-Int-log-aphla-bc}
\int_0^1 {x^{n-1}\frac{\log^{k}(1-x)}{(1-x)^{\alpha}}dx}=(-1)^{k}k!\frac{1}{n\binom{n-\alpha}{n}}\zeta_{n}^{\star}(\{1\}_{k};1-\alpha).
\end{align}
\end{thm}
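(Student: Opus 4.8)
The plan is to treat the base case $k=0$ via the Euler Beta integral and then deduce the general case by differentiating $k$ times in the parameter $\alpha$, with Lemma~\ref{lem-S-1} evaluating the resulting derivatives in closed form.

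First, since $\Re(\alpha)<1$ the integral converges at $x=1$, and the Beta integral gives
\begin{align*}
\int_0^1 x^{n-1}(1-x)^{-\alpha}\,dx=\frac{\Gamma(n)\Gamma(1-\alpha)}{\Gamma(n+1-\alpha)}=(n-1)!\prod_{j=1}^n\frac1{j-\alpha}=\frac1{n\binom{n-\alpha}{n}},
\end{align*}
the last equality being immediate from the definition of $\binom{n-\alpha}{n}$. Since $\dfrac{\partial^k}{\partial\alpha^k}(1-x)^{-\alpha}=(-\log(1-x))^k(1-x)^{-\alpha}$ and $\alpha\mapsto\int_0^1 x^{n-1}(1-x)^{-\alpha}\,dx$ is holomorphic on $\{\Re(\alpha)<1\}$, I would differentiate under the integral $k$ times to obtain
\begin{align*}
\int_0^1 x^{n-1}\frac{\log^k(1-x)}{(1-x)^{\alpha}}\,dx=(-1)^k(n-1)!\,g^{(k)}(\alpha),\qquad g(\alpha):=\prod_{j=1}^n\frac1{j-\alpha}.
\end{align*}

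It remains to compute $g^{(k)}(\alpha)$. From $g'(\alpha)=g(\alpha)\sum_{j=1}^n\frac1{j-\alpha}$ and $\frac{d^m}{d\alpha^m}\frac1{j-\alpha}=\frac{m!}{(j-\alpha)^{m+1}}$, the general Leibniz rule applied to $g^{(k)}=\bigl(g(\alpha)\sum_{j=1}^n\frac1{j-\alpha}\bigr)^{(k-1)}$ yields, with $x_j:=1/(j-\alpha)$ and $P_k:=g^{(k)}/g$,
\begin{align*}
P_k=(k-1)!\sum_{i=0}^{k-1}\frac{P_i}{i!}\sum_{j=1}^n x_j^{\,k-i},\qquad P_0=1.
\end{align*}
This is exactly the recurrence defining $A_k(n)$ in Lemma~\ref{lem-S-1} for this choice of $x_1,\dots,x_n$ (note $\alpha\notin\{1,\dots,n\}$ since $\Re(\alpha)<1$), so $P_k=A_k(n)=k!\,B_k(n)$; and $B_k(n)=\sum_{n\ge n_1\ge\cdots\ge n_k\ge1}\prod_{i=1}^k\frac1{n_i-\alpha}=\zeta_n^\star(\{1\}_k;1-\alpha)$ by the definition \eqref{THMHSSs} with parameter $1-\alpha$. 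Combining the three displays gives
\begin{align*}
\int_0^1 x^{n-1}\frac{\log^k(1-x)}{(1-x)^{\alpha}}\,dx=(-1)^k(n-1)!\,g(\alpha)\,k!\,\zeta_n^\star(\{1\}_k;1-\alpha)=(-1)^k k!\,\frac1{n\binom{n-\alpha}{n}}\,\zeta_n^\star(\{1\}_k;1-\alpha).
\end{align*}

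The only delicate point is the interchange of differentiation and integration near $x=1$: for $\Re(\alpha)<1$ one has $\bigl|\log^k(1-x)\,(1-x)^{-\alpha}\bigr|\le C_\varepsilon(1-x)^{-\Re(\alpha)-\varepsilon}$ for small $\varepsilon>0$, which is integrable and locally uniformly bounded in $\alpha$, so dominated convergence (equivalently, holomorphy of the Beta integral in $\alpha$ together with Cauchy's formula) justifies every step. All the remaining work is the routine recursion bookkeeping already packaged in Lemma~\ref{lem-S-1}.
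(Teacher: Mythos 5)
Your proof is correct and follows essentially the same route as the paper: both start from the Beta integral $B(n,1-\alpha)=\frac{1}{n\binom{n-\alpha}{n}}$, differentiate $k$ times in the parameter, and identify the resulting recurrence with the one in Lemma~\ref{lem-S-1} for $x_j=1/(j-\alpha)$ to produce $\zeta_n^\star(\{1\}_k;1-\alpha)$. The only (harmless) difference is that you exploit the integrality of $a=n$ to apply Leibniz directly to the explicit product $\prod_{j=1}^n(j-\alpha)^{-1}$, whereas the paper derives the same recurrence for $\partial^k B(a,b)/\partial b^k$ via polygamma-function differences.
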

\begin{proof}
We note that
\begin{equation}\label{equ:betaIntegralExp}
\int_0^1 {x^{n-1}\frac{\log^{k}(1-x)}{(1-x)^{\alpha}}dx}=\frac{\partial^{k}B(a,b)}{\partial b^{k}}\bigg|_{a=n\atop b=1-\alpha},
\end{equation}
where $B(a,b)$ is the \emph{beta function} defined by
\begin{equation}\label{equ:betaFunction}
B\left( {a,b} \right) := \int_0^1 {{x^{a-1}}{{\left( {1-x} \right)}^{b-1}}dx}
 = \frac{{\Gamma \left(a \right)\Gamma \left( b \right)}}{{\Gamma \left( {a+b} \right)}}\,,\quad\text{for }{\mathop{\Re}\nolimits} \left( a\right) > 0,{\mathop{\Re}\nolimits} \left(b\right) > 0.
\end{equation}
By using the definitions of the beta function ${B\left( a,b\right)}$ and the digamma function $\psi (x)$, it is obvious that
\[\frac{{\partial B\left( a,b \right)}}{{\partial a }} = B\left( a,b \right)\left[ {\psi \left( a  \right)-\psi \left( a+b \right)} \right].\]
Therefore, differentiating this equality $k-1$ times, we can deduce that
\begin{align}\label{Eq-Beta-Relat}
\frac{\partial^k B(a,b)}{\partial a^k} = \sum\limits_{i = 0}^{k-1}\binom{k-1}{i}\frac{\partial^i B(a,b)}{\partial a^i} \cdot
\Big[ \psi^{(k-i-1)} (a)-\psi^{(k-i-1)} (a+b) \Big].
\end{align}
Note that $B(a,b)=B(b,a)$. Hence, we have
\begin{align*}
\frac{\partial^{k}}{\partial b^{k}}B(a,b)\bigg|_{a=n\atop b=1-\alpha}
&=\sum\limits_{i=0}^{k-1}\binom{k-1}{i}
\frac{\partial^{i}B(a,b)}{\partial b^{i}}\bigg|_{a=n\atop b=1-\alpha}(-1)^{k-i}(k-i-1)!\\
&\qquad\qquad\qquad\qquad\qquad\times
\left\{\sum\limits_{m=0}^{\infty}\frac{1}{(\alpha+m)^{k-i}}-\sum\limits_{m=0}^{\infty}\frac{1}{(\alpha+n+m)^{k-i}}\right\}\\
&=\sum\limits_{i=0}^{k-1}\binom{k-1}{i}\frac{\partial^{i}B(a,b)}{\partial b^{i}}\bigg|_{a=n\atop b=1-\alpha}
(-1)^{k-i}(k-i-1)!\sum\limits_{m=0}^{n-1}\frac{1}{(\alpha+m)^{k-i}}\\
&=(k-1)!(-1)^{k}\sum\limits_{i=0}^{k-1}\frac{(-1)^{i}}{i!}
    \frac{\partial^{i}B(a,b)}{\partial b^{i}}
    \bigg|_{a=n\atop b=1-\alpha}\zeta_{n}(k-i;\alpha).
\end{align*}
This yields the recurrence relation
\begin{align}\label{eq-recureence-MHS-Beta-Para}
\left\{\frac{(-1)^{k}}{k!}\cdot\frac{\partial^{k}B(a,b)}{\partial b^{k}}\right\}\bigg|_{a=n\atop b=1-\alpha}
=\frac{1}{k}\sum\limits_{i=0}^{k-1}\left\{\frac{(-1)^{i}}{i!}\cdot
\frac{\partial^{i}B(a,b)}{\partial b^{i}}\right\}\bigg|_{a=n\atop b=1-\alpha}\zeta_{n}(k-i;1-\alpha).
\end{align}
In particular, if $k=0$ then
\[B(n,1-\alpha)=\frac{1}{n\binom{n-\alpha}{n}}.\]
On the other hand, taking $x_{k}=\frac{1}{k-\alpha}$ in Lemma \ref{lem-S-1} we see that
\begin{align}\label{eq-recureence-MHSH-Para}
\zeta_{n}^{\star}(\{1\}_{m};1-\alpha)=\frac{1}{m}\sum\limits_{i=0}^{m-1}\zeta_{n}^{\star}(\{1\}_{i};1-\alpha)\zeta_{n}(m-i;1-\alpha).
\end{align}
Multiplying \eqref{eq-recureence-MHSH-Para} by $\frac{1}{n\binom{n-\alpha}{n}}$ and comparing it with \eqref{eq-recureence-MHS-Beta-Para} yield
\begin{align}
\frac{(-1)^{k}}{k!}\frac{\partial^{k}B(a,b)}{\partial b^{k}}\bigg|_{a=n\atop b=1-\alpha}=\frac{1}{n\binom{n-\alpha}{n}}\zeta_{n}^{\star}(\{1\}_{k};1-\alpha).
\end{align}
This completes the proof of Theorem \ref{thm-Int-log-aphla-bc} by \eqref{equ:betaIntegralExp}.
\end{proof}

Obviously, from \eqref{eq-Int-log-aphla-bc} we have
\begin{align}\label{diff-para-binom-mhss}
&\frac{d^k}{d\alpha^k}\frac{1}{\binom{n-\alpha}{n}}=\frac{k!}{\binom{n-\alpha}{n}}\ze^\star_n(\{1\}_k;1-\alpha).
\end{align}
Setting $\alpha=1/2$ in \eqref{eq-Int-log-aphla-bc} yields the well-known result \cite[Eq. (7.2)]{XuZhao2021c}.

\begin{thm}\label{thm-mtss-mhs-cb-aphla} For any $k\in\N_0,\bfk=(k_1,\ldots,k_r)\in \N^r$ and $\alpha\in \mathbb{C}$ with $\Re(\alpha)<1$, we have
\begin{align}\label{eq-mtss-mhs-cb-aphla}
&\sum_{n=1}^\infty \frac{\ze_{n-1}(k_2,\ldots,k_r)\ze_n^\star(\{1\}_k;1-\alpha)}{n^{k_1+1}\binom{n-\alpha}{n}}\nonumber\\
&\quad=\sum_{|\bfj|=k\atop \dep(\bfj)=|\bfk|+1-\dep(\bfk)} B\left(\Big( \overleftarrow{\bfk}^\vee\Big)_+;\bfj\right)\ze\left(\Big( \overleftarrow{\bfk}^\vee\Big)_++\bfj;1-\alpha\right).
\end{align}
\end{thm}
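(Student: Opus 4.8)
The plan is to identify both sides of \eqref{eq-mtss-mhs-cb-aphla}, up to the constant factor $(-1)^kk!$, with the single integral
\[
I:=\int_0^1 \frac{\Li_{\bfk}(x)\log^k(1-x)}{x(1-x)^\alpha}\,dx .
\]
First note that, with $n:=|\bfk|+1-\dep(\bfk)$, the sum over $\bfj$ on the right-hand side of \eqref{eq-mtss-mhs-cb-aphla} is precisely the sum over $\bfj$ on the right-hand side of \eqref{EQ-MPL-LOG-A} in Theorem \ref{thm-MPL-MAL-Para}. Since the hypothesis $\Re(\alpha)<1$ forces $\alpha\notin\N$, that theorem applies and shows that $I$ equals $(-1)^kk!$ times the right-hand side of \eqref{eq-mtss-mhs-cb-aphla}. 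It therefore remains only to prove that $I$ equals $(-1)^kk!$ times the \emph{left}-hand side of \eqref{eq-mtss-mhs-cb-aphla}.

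To this end I would expand $\Li_{\bfk}(x)/x$ in powers of $x$, collecting the terms of the defining series \eqref{equ:classicalLi} according to the outermost index $n_1=n$, which gives
\[
\frac{\Li_{\bfk}(x)}{x}=\sum_{n=1}^\infty \frac{\ze_{n-1}(k_2,\ldots,k_r)}{n^{k_1}}\,x^{n-1}\qquad(x\in[0,1)) ,
\]
and then integrate term by term, evaluating each resulting integral by Theorem \ref{thm-Int-log-aphla-bc} (whose hypothesis $\Re(\alpha)<1$ is exactly the present one):
\[
\int_0^1 x^{n-1}\frac{\log^k(1-x)}{(1-x)^\alpha}\,dx=(-1)^kk!\,\frac{\ze_n^\star(\{1\}_k;1-\alpha)}{n\binom{n-\alpha}{n}} .
\]
Summing over $n$ then yields
\[
I=(-1)^kk!\sum_{n=1}^\infty \frac{\ze_{n-1}(k_2,\ldots,k_r)\,\ze_n^\star(\{1\}_k;1-\alpha)}{n^{k_1+1}\binom{n-\alpha}{n}},
\]
which, combined with the previous paragraph and cancellation of the factor $(-1)^kk!$, is exactly the assertion of the theorem. (For $k=0$ this degenerates to \eqref{eq-MPL-IT-NOT-Log-HTMZVs}, so the statement is consistent.)

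The only step that is not purely formal — and the one I expect to be the main obstacle — is the justification of the term-by-term integration, which I would handle by dominated convergence (equivalently, Tonelli's theorem applied to the nonnegative series of absolute values). Because all coefficients $\ze_{n-1}(k_2,\ldots,k_r)$ are nonnegative, the sum of the absolute values of the summands equals
\[
\frac{\Li_{\bfk}(x)}{x}\cdot\frac{\bigl|\log(1-x)\bigr|^{k}}{(1-x)^{\Re(\alpha)}} .
\]
This function is continuous on $[0,1)$ (the factor $\Li_{\bfk}(x)/x$ extends continuously across $x=0$), and near $x=1$ the crude bound $0\le\Li_{\bfk}(x)\le\bigl(-\log(1-x)\bigr)^{r}/r!$ — valid on $[0,1)$ with $r=\dep(\bfk)$, obtained by replacing each $k_i$ by $1$ in \eqref{equ:classicalLi} — shows it is $O\bigl(|\log(1-x)|^{k+r}(1-x)^{-\Re(\alpha)}\bigr)$, which is integrable on $(0,1)$ since $\Re(\alpha)<1$. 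Hence the interchange of summation and integration is legitimate, and the proof is complete.
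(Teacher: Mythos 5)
Your proof is correct and follows essentially the same route as the paper's: expand $\Li_{\bfk}(x)/x$ as a power series, integrate term by term via Theorem \ref{thm-Int-log-aphla-bc}, and identify the resulting integral with the right-hand side via \eqref{EQ-MPL-LOG-A}. The only addition is your explicit Tonelli/dominated-convergence justification of the interchange of summation and integration (using $0\le\Li_{\bfk}(x)\le(-\log(1-x))^{r}/r!$), which the paper takes for granted; this is a welcome piece of extra rigor but not a different method.
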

\begin{proof}
Note that
\begin{align*}
\int_0^1 {\frac{{\rm Li}_{k_1,k_2,\ldots,k_r}(x)\log^{k}(1-x)}{x(1-x)^{\alpha}}dx}=\sum_{n=1}^\infty \frac{\ze_{n-1}(k_2,\ldots,k_r)}{n^{k_1}}\int_0^1{x^{n-1}\frac{\log^{k}(1-x)}{(1-x)^{\alpha}}dx}.
\end{align*}
By \eqref{eq-Int-log-aphla-bc}, there holds
\begin{align*}
\int_0^1 {\frac{{\rm Li}_{k_1,k_2,\ldots,k_r}(x)\log^{k}(1-x)}{x(1-x)^{\alpha}}dx}=(-1)^kk!\sum_{n=1}^\infty \frac{\ze_{n-1}(k_2,\ldots,k_r)\ze^\star(\{1\}_k;1-\alpha)}{n^{k_1+1}\binom{n-\alpha}{n}}.
\end{align*}
Thus we deduce the desired result by \eqref{EQ-MPL-LOG-A}.
\end{proof}

For example, we have
\begin{align*}
&\sum\limits_{n=1}^{\infty}\frac{\zeta_{n}^{\star}(1;1-\alpha)}{n^{3}\binom{n-\alpha}{n}}=2\zeta(3,1;1-\alpha)+\zeta(2,2;1-\alpha),\\
&\sum\limits_{n=1}^{\infty}\frac{\zeta_{n-1}(1)\zeta_{n}^{\star}(1;1-\alpha)}
 {n^{3}\binom{n-\alpha}{n}}=3\ze(4,1;1-\alpha)+\ze(3,2;1-\alpha),\\
&\sum\limits_{n=1}^{\infty}\frac{\zeta_{n-1}(2)\zeta_{n}^{\star}(1;1-\alpha)}
 {n^{2}\binom{n-\alpha}{n}}=2\ze(3,2;1-\alpha)+2\ze(2,3;1-\alpha),\\
&\sum\limits_{n=1}^{\infty}\frac{\zeta_{n-1}(2)\zeta_{n}^{\star}(1;1-\alpha)}
 {n^{3}\binom{n-\alpha}{n}}=2\zeta(3,2,1;1-\alpha)+2\zeta(2,3,1;1-\alpha)+\zeta(2,2,2;1-\alpha).
\end{align*}
In particular, letting $k=0$ and $\alpha=0$ in \eqref{eq-mtss-mhs-cb-aphla} gives the duality relation
\[\ze(k_1+1,k_2,\ldots,k_r)=\ze((1,k_r,k_{r-1},\ldots,k_1)^\vee).\]

\begin{re} It should be pointed out that \eqref{eq-mtss-mhs-cb-aphla} was
firstly proved by Chen \cite[Thm. 4.4]{ChenKW19}.
\end{re}

\begin{thm}\label{thm-mtss-mhs-mhss-cb-aphla} For any composition $\bfk:=(k_1,k_2,\ldots,k_r)$, $k\in \N_0$, and any $\alpha\in \mathbb{C}\backslash \Z$, we have
\begin{align}\label{eq-mtss-mhs-mhss-cb-aphla}
\delta_{0,k}\ze(k_1+1,k_2,\ldots,k_r)
&+\sum\limits_{j=1}^{k_{1}-1}(-1)^{j-1}\zeta(k_1+1-j,k_2,\ldots,k_r)
\sum\limits_{n=1}^{\infty}\frac{\binom{n+\alpha-1}{n}\zeta_{n}(\{1\}_{k};\alpha)}{n^{j}}\nonumber\\
&+\sum\limits_{l=1}^{r}(-1)^{\overrightarrow{\mid{\bfk}_{l}\mid}-l}\sum\limits_{j=1}^{k_{l+1}-1}(-1)^{j-1}
\zeta(k_{l+1}+1-j,k_l+2,\ldots,k_r)\nonumber\\
&\quad\quad\quad\quad\quad\quad\quad\times\sum\limits_{n=1}^{\infty}\frac{\binom{n+\alpha-1}{n}\zeta_{n}(\{1\}_{k};\alpha)\zeta_{n}^{\star}(k_2,\ldots,k_l,j)}{n^{k_{1}}}\nonumber\\
&=\sum_{|\bfj|=k\atop \dep(\bfj)=|\bfk|+1-\dep(\bfk)} B\left(\Big( \overleftarrow{\bfk}^\vee\Big)_+;\bfj\right)\ze\left(\Big( \overleftarrow{\bfk}^\vee\Big)_++\bfj;1-\alpha\right),
\end{align}
where $\delta_{0,0}:=1$ and $\delta_{0,k}:=0$ for $k>0$.
\end{thm}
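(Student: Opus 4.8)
The plan is to combine the two integral evaluations already established — the "MPL times power of log" identity \eqref{EQ-MPL-LOG-A} of Theorem \ref{thm-MPL-MAL-Para} on one side, and the series expansions of Theorems \ref{thm-More-Gen-Fun-1-x-a} and \ref{thm-Int-log-aphla-bc} together with the Apéry-type expansion in \eqref{Eq-x-n-MPL-IN} on the other — by evaluating a single definite integral in two different ways. The integral in question is
\begin{align*}
I:=\int_0^1 \frac{\Li_{\bfk}(x)\log^k(1-x)}{x(1-x)^\alpha}\,dx.
\end{align*}

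First I would compute $I$ via Theorem \ref{thm-MPL-MAL-Para}: by \eqref{EQ-MPL-LOG-A},
\begin{align*}
I=(-1)^k k!\sum_{|\bfj|=k,\ \dep(\bfj)=|\bfk|+1-\dep(\bfk)} B\!\left(\Big(\overleftarrow{\bfk}^\vee\Big)_+;\bfj\right)\ze\!\left(\Big(\overleftarrow{\bfk}^\vee\Big)_++\bfj;1-\alpha\right),
\end{align*}
which is exactly $(-1)^k k!$ times the right-hand side of \eqref{eq-mtss-mhs-mhss-cb-aphla}. The remaining task is to show that the left-hand side of \eqref{eq-mtss-mhs-mhss-cb-aphla}, multiplied by $(-1)^k k!$, equals $I$ as well.

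For the second evaluation I would insert Theorem \ref{thm-More-Gen-Fun-1-x-a}, namely
\begin{align*}
\frac{(-1)^k}{k!}\frac{\log^k(1-x)}{(1-x)^\alpha}=\sum_{n=1}^\infty \binom{n+\alpha-1}{n}\ze_n(\{1\}_k;\alpha)x^n,
\end{align*}
so that
\begin{align*}
\frac{(-1)^k}{k!}\,I=\sum_{n=1}^\infty \binom{n+\alpha-1}{n}\ze_n(\{1\}_k;\alpha)\int_0^1 x^{n-1}\Li_{\bfk}(x)\,dx,
\end{align*}
after justifying term-by-term integration (absolute/uniform convergence on $[0,1-\eps]$ plus a boundary estimate near $x=1$ using $\Re(\alpha)<1$; one may also first integrate over $[0,1-\eps]$ and let $\eps\to0$). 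Now I would substitute the closed form \eqref{Eq-x-n-MPL-IN} for $\int_0^1 x^{n-1}\Li_{\bfk}(x)\,dx$. This produces two groups of terms: the diagonal group $\sum_{j=1}^{k_1-1}(-1)^{j-1}\ze(k_1+j-1,\overrightarrow{\bfk}_{2,r})/n^{j}$ — wait, more precisely the first sum in \eqref{Eq-x-n-MPL-IN} involves $\ze(k_1+j-1,\overrightarrow{\bfk}_{2,r})$ with a $1/n^j$, and the second, double, sum involves $\ze_n^\star(\overrightarrow{\bfk}_{2,l},j)/n^{k_1}$ against $\ze(k_{l+1}+1-j,\overrightarrow{\bfk}_{l+2,r})$. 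Resumming over $n$ term by term yields precisely the two multi-sums appearing on the left of \eqref{eq-mtss-mhs-mhss-cb-aphla}, matching the notation $\overrightarrow{|\bfk_l|}$, the signs $(-1)^{\overrightarrow{|\bfk_l|}-l}$ and $(-1)^{j-1}$, and the indices $(k_{l+1}+1-j,k_l+2,\dots,k_r)$ versus $(k_{l+1}+1-j,\overrightarrow{\bfk}_{l+2,r})$. The stray Kronecker-delta term $\delta_{0,k}\ze(k_1+1,k_2,\dots,k_r)$ on the left accounts for the $k=0$ case, where $\log^0(1-x)=1$ and the $n$-series is a geometric-type rearrangement requiring the $n\to\infty$ limit of the first sum in \eqref{Eq-x-n-MPL-IN} to be extracted separately; alternatively this is forced by comparing with the $k=0$ specialization which is Theorem \ref{thm-mtss-mhs-cb-aphla} combined with the duality.

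The main obstacle I anticipate is the bookkeeping of convergence and index matching, not any deep idea: one must check that each of the doubly- and triply-indexed series converges (the Remark's asymptotic $\binom{n+\alpha-1}{n}\sim n^{\alpha-1}/\Gamma(\alpha)$ together with $\ze_n(\{1\}_k;\alpha)=O((\log n)^k)$ keeps every term $n^{k_1}$-summable since $\Re(\alpha)<1$ and $k_1\ge 1$, or $k_1+1-j$ effectively $\ge 2$ after the reindexing), and that the interchange of $\sum_n$ with $\int_0^1$ is legitimate up to the boundary. A secondary subtlety is the $k=0$ term: the finite sum $\sum_{j=1}^{k_1-1}(-1)^{j-1}\ze(k_1+j-1,\overrightarrow{\bfk}_{2,r})n^{-j}$ in \eqref{Eq-x-n-MPL-IN} contributes a genuine constant (independent of the $\ze_n^\star$ factor) only when $k=0$ because $\ze_n(\{1\}_0;\alpha)=1$, forcing the $\delta_{0,k}$ correction; for $k\ge1$, $\ze_n(\{1\}_k;\alpha)\to\infty$ but is tempered by $\binom{n+\alpha-1}{n}\to 0$, and the limiting constant does not appear. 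Handling this uniformly — e.g.\ by carrying the partial sums $\sum_{n\le N}$ and tracking the $N\to\infty$ behavior of the "tail" piece coming from $\lim_{n}\int_0^1 x^{n-1}\Li_{\bfk}(x)\,dx$ — is the one place where care is needed. Once these analytic points are dispatched, equating the two evaluations of $I$ and dividing by $(-1)^k k!$ gives \eqref{eq-mtss-mhs-mhss-cb-aphla}.
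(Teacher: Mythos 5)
Your proposal is correct and follows essentially the same route as the paper: evaluate $\int_0^1 \Li_{\bfk}(x)\log^k(1-x)\,x^{-1}(1-x)^{-\alpha}\,dx$ once by \eqref{EQ-MPL-LOG-A} and once by expanding $\log^k(1-x)(1-x)^{-\alpha}$ via Theorem \ref{thm-More-Gen-Fun-1-x-a} and integrating term by term against \eqref{Eq-x-n-MPL-IN}. The only clarification worth making is that the $\delta_{0,k}$ term has a simpler origin than the limiting process you describe: for $k=0$ one uses \eqref{eq-Gen-Fun-1-x-a}, whose constant term $1$ contributes $\int_0^1 \Li_{\bfk}(x)\,x^{-1}dx=\ze(k_1+1,k_2,\ldots,k_r)$ outside the sum over $n\ge 1$, whereas for $k\ge 1$ the expansion \eqref{eq-More-Gen-Fun-1-x-a} has no constant term.
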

\begin{proof} From \eqref{eq-More-Gen-Fun-1-x-a} and \eqref{eq-Gen-Fun-1-x-a},
we can find that if $k=0$ then
\[
\int_0^1 {\frac{{\rm Li}_{k_1,k_2,\ldots,k_r}(x)}{x(1-x)^{\alpha}}dx}
 =\ze(k_1+1,k_2,\ldots,k_r)+\sum\limits_{n=1}^{\infty}\binom{n+\alpha-1}{n}\int_0^1 {x^{n-1}{\rm Li}_{k_1,k_2,\ldots,k_r}(x)dx},
\]
and if $k\in \N$ then
\begin{align*}
&\int_0^1 {\frac{{\rm Li}_{k_1,k_2,\ldots,k_r}(x)\log^{k}(1-x)}{x(1-x)^{\alpha}}dx}\\
&\quad=(-1)^{k}k!\sum\limits_{n=1}^{\infty}\binom{n+\alpha-1}{n}\zeta_{n}(\{1\}_{k};\alpha)\int_0^1 {x^{n-1}{\rm Li}_{k_1,k_2,\ldots,k_r}(x)dx}.
\end{align*}
The theorem now follows from \eqref{Eq-x-n-MPL-IN} easily.
\end{proof}

Clearly, letting $\alpha=1/2$ in Theorems \ref{thm-mtss-mhs-cb-aphla} and \ref{thm-mtss-mhs-mhss-cb-aphla}, we obtain \cite[Thm. 8.3]{XuZhao2021c}.

\begin{thm}\label{thm-binom-para-diff} For $k\in \N,m\in\N_0$ and $\alpha\in \mathbb{C}\backslash \Z$, we have
\begin{align}
&\sum_{n=1}^\infty \frac{\binom{n+\alpha-1}{n}}{n^{m+1}}=\alpha\sum_{n=1}^\infty \frac{\ze_{n-1}(\{1\}_m)}{n(n-\alpha)},\label{eq-not-mhs-1}\\
&\sum_{n=1}^\infty \frac{\binom{n+\alpha-1}{n}\ze_{n}(\{1\}_k;\alpha)}{n^{m+1}}=\sum_{n=1}^\infty \frac{\ze_{n-1}(\{1\}_m)}{(n-\alpha)^{k+1}}.\label{eq-have-mhs-1}
\end{align}
\end{thm}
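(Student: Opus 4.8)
The plan is to prove \eqref{eq-not-mhs-1} by writing both sides as one and the same definite integral, the two representations being linked by the substitution $x\mapsto 1-x$, and then to deduce \eqref{eq-have-mhs-1} from \eqref{eq-not-mhs-1} by differentiating $k$ times in $\alpha$ (alternatively, by repeating the integral argument with Theorem~\ref{thm-More-Gen-Fun-1-x-a} in place of \eqref{eq-Gen-Fun-1-x-a}). For \eqref{eq-not-mhs-1}, I would insert $\dfrac{1}{n^{m+1}}=\dfrac{(-1)^m}{m!}\int_0^1 x^{n-1}\log^m x\,dx$ into the left side and sum against the expansion \eqref{eq-Gen-Fun-1-x-a}; after interchanging $\sum$ and $\int$ this gives
\[
\sum_{n=1}^\infty\frac{\binom{n+\alpha-1}{n}}{n^{m+1}}
=\frac{(-1)^m}{m!}\int_0^1\frac{\log^m x}{x}\Bigl(\frac{1}{(1-x)^\alpha}-1\Bigr)dx .
\]
For the right side, I would write $\dfrac{\alpha}{n(n-\alpha)}=\dfrac{1}{n-\alpha}-\dfrac1n=\int_0^1 x^{n-1}(x^{-\alpha}-1)\,dx$ and use the generating identity $\sum_{n\ge1}\ze_{n-1}(\{1\}_m)\,x^n=\dfrac{(-1)^m}{m!}\dfrac{x\log^m(1-x)}{1-x}$, which follows from \eqref{Eq-GF-Log-k} by a Cauchy product with $\tfrac1{1-x}=\sum_{n\ge0}x^n$ together with the identity $\sum_{j=1}^n\ze_{j-1}(\{1\}_{m-1})/j=\ze_n(\{1\}_m)$ (the case $m=0$ being the geometric series); this yields
\[
\alpha\sum_{n=1}^\infty\frac{\ze_{n-1}(\{1\}_m)}{n(n-\alpha)}
=\frac{(-1)^m}{m!}\int_0^1\frac{(x^{-\alpha}-1)\log^m(1-x)}{1-x}\,dx .
\]
Since $x\mapsto 1-x$ carries the second integral into the first, \eqref{eq-not-mhs-1} holds for $\Re(\alpha)<1$, and the general case $\alpha\in\mathbb C\setminus\Z$ follows by analytic continuation in $\alpha$. (Expanding both sides of \eqref{eq-not-mhs-1} in powers of $\alpha$ shows it is equivalent to the duality relation $\ze(m+2,\{1\}_{k-1})=\ze(k+1,\{1\}_m)$.)

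To obtain \eqref{eq-have-mhs-1}, I would apply $\dfrac1{k!}\dfrac{d^k}{d\alpha^k}$ to \eqref{eq-not-mhs-1}. Differentiating the left side termwise and using $\dfrac{d^k}{d\alpha^k}\binom{n+\alpha-1}{n}=k!\binom{n+\alpha-1}{n}\ze_n(\{1\}_k;\alpha)$ --- which was established in the proof of Theorem~\ref{thm-More-Gen-Fun-1-x-a}, since $\binom{n+\alpha-1}{n}=(\alpha)_n/n!$ --- reproduces the left side of \eqref{eq-have-mhs-1}. On the right, Leibniz's rule gives
\[
\frac1{k!}\frac{d^k}{d\alpha^k}\!\Bigl(\alpha\sum_{n\ge1}\frac{\ze_{n-1}(\{1\}_m)}{n(n-\alpha)}\Bigr)
=\sum_{n\ge1}\frac{\ze_{n-1}(\{1\}_m)}{n}\Bigl(\frac{\alpha}{(n-\alpha)^{k+1}}+\frac{1}{(n-\alpha)^{k}}\Bigr)
=\sum_{n\ge1}\frac{\ze_{n-1}(\{1\}_m)}{(n-\alpha)^{k+1}},
\]
where the last equality puts the two terms over $(n-\alpha)^{k+1}$ and uses $\alpha+(n-\alpha)=n$; this is exactly the right side of \eqref{eq-have-mhs-1}.

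The algebra above is routine; the part requiring care is the analytic bookkeeping. I would need to justify interchanging $\sum$ and $\int$ (by Fubini--Tonelli: for real $\alpha>0$ the integrands keep a fixed sign, and complex $\alpha$ is controlled by its real part), verify that $\int_0^1\frac{\log^m x}{x}\bigl((1-x)^{-\alpha}-1\bigr)dx$ converges at both endpoints --- near $x=1$ the integrand behaves like $(1-x)^{m-\alpha}$ and near $x=0$ like $\log^m x$, which is where the restriction on $\Re(\alpha)$ enters --- justify termwise differentiation under the summation sign (local uniform convergence of the differentiated series on compact subsets away from $\N$), and finally carry out the analytic continuation to reach all $\alpha\in\mathbb C\setminus\Z$. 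I expect this bundle of convergence and continuation checks, rather than any individual computation, to be the main obstacle.
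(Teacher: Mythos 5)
Your proof is correct and follows essentially the same route as the paper: both sides of \eqref{eq-not-mhs-1} are realized as the single integral $\frac{(-1)^m}{m!}\int_0^1\frac{\log^m x}{x}\bigl((1-x)^{-\alpha}-1\bigr)dx$ via the substitution $x\mapsto 1-x$, and \eqref{eq-have-mhs-1} is then obtained by differentiating in $\alpha$ using \eqref{eq-diff-pcb}. Your count of $k$ differentiations (divided by $k!$) is the correct one --- the paper's ``differentiating $k-1$ times'' is an off-by-one slip --- and your added attention to convergence and analytic continuation only strengthens the argument.
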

\begin{proof}
Consider the integral
\begin{align*}
&\int_0^1 \frac{\log^m(x)}{x}\left(\frac1{(1-x)^\alpha}-1 \right)=\sum_{n=1}^\infty \binom{n+\alpha-1}{n}\int_0^1 x^{n-1}\log^m(x)dx\\
&\quad=(-1)^mm!\sum_{n=1}^\infty \frac{\binom{n+\alpha-1}{n}}{n^{m+1}}
    =\int_0^1 \frac{\log^m(1-x)}{1-x}\left(\frac1{x^\alpha}-1 \right)dx\\
&\quad=(-1)^mm!\sum_{n=1}^\infty \ze_{n-1}(\{1\}_m) \int_0^1 x^{n-1}\left(\frac1{x^\alpha}-1 \right)dx
    =(-1)^mm!\alpha\sum_{n=1}^\infty \frac{\ze_{n-1}(\{1\}_m)}{n(n-\alpha)}.
\end{align*}
This immediately leads to \eqref{eq-not-mhs-1}. Noting that from \eqref{eq-More-Gen-Fun-1-x-a} we have
\begin{align}\label{eq-diff-pcb}
\frac{d^k}{d\alpha^k}\binom{n+\alpha-1}{n}=k!\binom{n+\alpha-1}{n}\ze_n(\{1\}_k;\alpha).
\end{align}
Thus,  differentiating \eqref{eq-not-mhs-1} $k-1$ times with respect to $\alpha$ yields \eqref{eq-have-mhs-1} easily.
\end{proof}

Some explicit evaluations of the series on the right-hand side of \eqref{eq-not-mhs-1} and \eqref{eq-have-mhs-1} can be found in \cite{Xu2017-JMAA}. For example,
from \cite[Thm. 2.6]{Xu2017-JMAA}, we deduce
\begin{align}
\sum\limits_{n = 1}^\infty  {\frac{{{H_n}}}
{{\left( {n + a} \right)\left( {n + b} \right)}}}  &= \frac{1}
{{b-a}}\mathop {\lim }\limits_{x \to 1} \left\{ {\sum\limits_{n = 1}^\infty  {\left( {\frac{{{x^n}}}
{{n + a}}-\frac{{{x^n}}}
{{n + b}}} \right)\left( {\sum\limits_{j = 1}^n {\frac{{{x^{n-j}}}}
{j}} } \right)} } \right\}\nonumber\\
&= \sum\limits_{n = 1}^\infty  {\frac{1}
{{n\left( {n + a} \right)\left( {n + b} \right)}}}  + \frac{1}
{{2\left( {b-a} \right)}}\sum\limits_{n = 1}^\infty  {\left\{ {\frac{1}
{{{{\left( {n + a} \right)}^2}}}-\frac{1}
{{{{\left( {n + b} \right)}^2}}}} \right\}}\nonumber \\
& \quad+ \frac{1}
{2}\left( {\sum\limits_{n = 1}^\infty  {\frac{1}
{{\left( {n + a} \right)\left( {n + b} \right)}}} } \right)\left( {a\sum\limits_{n = 1}^\infty  {\frac{1}
{{n\left( {n + a} \right)}} + b\sum\limits_{n = 1}^\infty  {\frac{1}
{{n\left( {n + b} \right)}}} } } \right),
\end{align}
where $a,b\in \mathbb{C}\backslash \N^-$ with $a\neq b$, and $H_n=\ze_n(1)$ are the classical harmonic numbers.
Letting $a=\alpha$ and $b=0$ yields
\begin{align}\label{case-Harmonic-N}
\sum_{n=1}^\infty \frac{\ze_{n-1}(1)}{n(n+\alpha)}=\frac{\ze(2)-\ze(2;1+\alpha)}{2\alpha}+\frac{(\psi(1+\alpha)+\gamma)^2}{2\alpha}.
\end{align}
Moreover, from \eqref{case-Harmonic-N} and \cite[Thm. 3.1 and Eq. (2.24)]{Xu2017-JMAA}, we further find that for $k\in \N$ and $\alpha\in \mathbb{C}\backslash \N^-$, the parametric Euler sums
\[\sum_{n=1}^\infty \frac{\ze_{n-1}(1)}{(n+\alpha)^{k+1}}\quad \text{and}\quad \sum_{n=1}^\infty \frac{\ze_{n-1}(1,1)}{(n+\alpha)^{k+1}}\]
can be expressed in terms of combinations of products of Hurwitz zeta values and digamma function. In fact, we have the following general result.

\begin{thm} For any positive integers $k,m$ and complex $\Re(\alpha)>0$, the parametric Euler sums
\[\sum_{n=1}^\infty \frac{\ze_{n-1}(\{1\}_m)}{n(n+\alpha)}\quad \text{and}\quad \sum_{n=1}^\infty \frac{\ze_{n-1}(\{1\}_m)}{(n+\alpha)^{k+1}}\]
can be evaluated by Hurwitz zeta values and digamma function.
\end{thm}
\begin{proof} Changing $k$ by $m+1$ in \eqref{Eq-GF-Log-k} and differentiating, one obtains
\begin{align*}
\sum_{n=1}^\infty \ze_n(\{1\}_m)x^n = \frac{(-1)^m}{m!} \frac{\log^m(1-x)}{1-x}.
\end{align*}
Multiplying it by $(1-x^\alpha)$ and integrating over $(0,1)$ yields
\begin{align*}
&\alpha\sum_{n=1}^\infty \frac{\ze_n(\{1\}_m)}{(n+1)(n+\alpha+1)}=\alpha\sum_{n=1}^\infty \frac{\ze_{n-1}(\{1\}_m)}{n(n+\alpha)}\\
&= \frac{(-1)^m}{m!} \int_0^1 \frac{\log^m(1-x)}{1-x}(1-x^\alpha)dx= \frac{(-1)^{m+1}}{(m+1)!} \int_0^1 (1-x^\alpha)d\log^{m+1}(1-x)\\
&=\frac{(-1)^{m+1}}{(m+1)!} \alpha \int_0^1 x^{\alpha-1}\log^{m+1}(1-x)dx.
\end{align*}
Hence, we have
\begin{align*}
\sum_{n=1}^\infty \frac{\ze_{n-1}(\{1\}_m)}{n(n+\alpha)}=\frac{(-1)^{m+1}}{(m+1)!}\int_0^1 x^{\alpha-1}\log^{m+1}(1-x)dx.
\end{align*}
Noting the fact that
\begin{align*}
\alpha\sum_{n=1}^\infty \frac{\ze_{n-1}(\{1\}_m)}{n(n+\alpha)}=\sum_{n=1}^\infty \ze_{n-1}(\{1\}_m)\left(\frac1{n}-\frac1{n+\alpha}\right)
\end{align*}
and differentiating this equality $k$ times, we can deduce that
\begin{align*}
\sum_{n=1}^\infty \frac{\ze_{n-1}(\{1\}_m)}{(n+\alpha)^{k+1}}=\frac{(-1)^{m+k}}{(m+1)!k!} \frac{d^k}{d\alpha^k} \left\{\alpha \int_0^1 x^{\alpha-1}\log^{m+1}(1-x)dx\right\}.
\end{align*}
On the other hand, from \cite[Thm. 2.1]{Xu2017-2}, we know the result that the integral on the right hand of above
\[\int_0^1 x^{\alpha-1}\log^{m+1}(1-x)dx\]
can be expressed in terms of Hurwitz zeta values and digamma function. Thus, combining related results, we obtain the statement of this theorem.
\end{proof}

From Theorems \ref{thm-mtss-mhs-mhss-cb-aphla} and \ref{thm-binom-para-diff}
we can calculate the following cases with the help of \eqref{case-Harmonic-N}:
\begin{align*}
&\sum_{n=1}^\infty \frac{\binom{n+\alpha-1}{n}}{n}=-(\psi(1-\alpha)+\gamma),\\
&\sum\limits_{n=1}^{\infty}\frac{\binom{n+\alpha-1}{n}\zeta_{n}(\{1\}_k;\alpha)}{n}=\zeta(k+1;1-\alpha),\quad (k\geq 1)\\
&\sum_{n=1}^\infty \frac{\binom{n+\alpha-1}{n}}{n^2}=\frac 1{2}(\ze(2;1-\alpha)-\ze(2))-\frac 1{2}(\psi(1-\alpha)+\gamma)^2,\\
&\sum\limits_{n=1}^{\infty}\frac{\binom{n+\alpha-1}{n}\zeta_{n}^{\star}(\{1\}_{k})}{n^{2}}
 =\zeta(k+1,1)-\zeta(k+1,1;1-\alpha)-\zeta(k+1)(\psi(1-\alpha)+\gamma)\quad (k\geq 1),\\
&\sum\limits_{n=1}^{\infty}\frac{\binom{n+\alpha-1}{n}\zeta_{n}(1;\alpha)\zeta_{n}^{\star}(1)}{n^{2}}
=\zeta(2)\zeta(2;1-\alpha)-2\zeta(3,1;1-\alpha)-\zeta(2,2;1-\alpha),\\
&\sum\limits_{n=1}^{\infty}\frac{\binom{n+\alpha-1}{n}\zeta_{n}(1;\alpha)\zeta_{n}^{\star}(1,1)}{n^{2}}
=\ze(3)\ze(2;1-\alpha)-\ze(3,2;1-\alpha)-3\ze(4,1;1-\alpha),\\
&\sum\limits_{n=1}^{\infty}\frac{\binom{n+\alpha-1}{n}\zeta_{n}(1;\alpha)\zeta_{n}^{\star}(2,1)}{n^{2}}=2\zeta(3,2,1;1-\alpha)+2\zeta(2,3,1;1-\alpha)+\zeta(2,2,2;1-\alpha)\\
&\quad\quad\quad\quad\quad\quad\quad\quad\quad\quad\quad\quad\quad+\frac{7}{4}\zeta(4)\zeta(2;1-\alpha)-2\zeta(2)\zeta(3,1;1-\alpha)-\zeta(2)\zeta(2,2;1-\alpha).
\end{align*}

\begin{re} It is obvious that if $\alpha\in \mathbb{C}\backslash \N_0^-$ and $\Re(\alpha)<1$, then the Ap\'ery-type series on the left-hand side of \eqref{eq-mtss-mhs-cb-aphla} and the linear combination of Ap\'ery-type series on the left-hand side of \eqref{eq-mtss-mhs-mhss-cb-aphla} are equal. In particular, by considering the integral
\[\int_0^1 \frac{\log^{r+k}(1-x)}{x(1-x)^\alpha}dx,\quad\text{for } r,k\in \N,\]
we can deduce the following duality formula
\begin{align}
\sum_{n=1}^\infty \binom{n+\alpha-1}{n}\frac{\ze_n(\{1\}_k;\alpha)\ze_n^\star(\{1\}_r)}{n}&=\sum_{n=1}^\infty \frac{\ze_{n-1}(\{1\}_{r-1})\ze^\star_n(\{1\}_k;1-\alpha)}{n^2\binom{n-\alpha}{n}}\nonumber\\
&=\binom{k+r}{k}\ze(k+r+1;1-\alpha).
\end{align}
\end{re}

\subsection{Several general results}

In \eqref{EQ-MPL-LOG-A}, letting $\Big( \overleftarrow{\bfk}^\vee\Big)_+=(m_1,\ldots,m_p)$, we obtain $(k_1,\ldots,k_r)=(m_p,\ldots,m_2,m_1-1)^\vee$, where $m_1>1,m_j\geq 1$ for $j=2,3,\ldots,p$. Then, \eqref{EQ-MPL-LOG-A} can be written in the following form
\begin{align}\label{EQ-MPL-LOG-A-change}
&\int_0^1 \frac{\Li_{(m_p,\ldots,m_2,m_1-1)^\vee}(x)\log^k(1-x)}{x(1-x)^\alpha}dx=(-1)^k\frac{d^k}{d\alpha^k}\ze(m_1,m_2,\ldots,m_p;1-\alpha)\nonumber\\
&=(-1)^kk!\sum_{i_1+i_2+\cdots+i_p=k\atop i_j\geq 0,\ \forall j}\left\{\prod\limits_{j=1}^p\binom{m_j+i_j-1}{i_j}\right\}\ze(m_1+i_1,m_2+i_2,\ldots,m_p+i_p;1-\alpha).
\end{align}
In particular, if $m_1=\cdots=m_p=m\geq 2$ then $(\{m\}_{p-1},m-1)^\vee=(\{1\}_{m-1},\{2,\{1\}_{m-2}\}_{p-1})$.

\begin{lem}(cf. \cite[Thm. 3.5]{X2020})\label{lem-General-HTMZV-HZV} For any $k,m,p\in\N$ and $a\in \mathbb{C}\backslash \N^-$, we have
\begin{align}\label{eq-differ-k-HMZVs-HZV}
&\frac{(-1)^k}{k!}\frac{\partial^k}{\partial a^k} \left\{\zeta(\{m+1\}_{p};a+1)\right\} =\sum\limits_{c_1+2c_2+\cdots+pc_p=p\atop c_1,c_2,\ldots,c_p \geq 0} \left\{\prod\limits_{j=1}^p \frac{(-1)^{(j-1)c_j}}{c_j!j^{c_j}} \right\}\nonumber\\
& \quad \quad \quad \quad \quad \times \sum\limits_{k_1+k_2+\cdots+k_{{|\bf c|}_p}=k\atop k_1,k_2,\ldots,k_{{|\bf c|}_p}\geq 0} \prod_{i=1}^p \prod\limits_{j_i={|\bf c|}_{i-1}+1}^{{|\bf c|}_i} \binom{im+i-1+k_{j_i}}{k_{j_i}} \zeta(im+i+k_{j_i};a+1),
\end{align}
where $\prod_{j=1}^0(\cdot):=1$, $|{\bf c}|_0:=0$ and $|{\bf c}|_i:=c_1+c_2+\cdots+c_i$ for all $i\ge 1$.
\end{lem}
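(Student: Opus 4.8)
The plan is to reduce everything to two classical ingredients: the Newton--Girard (exponential) formula expressing an elementary symmetric function through power sums, and the general Leibniz rule. First I would observe that, writing $x_n:=(n+a)^{-(m+1)}$ for $n\ge1$, the definition \eqref{HTMZVs+HTMZSVs-r} gives
\[
\zeta(\{m+1\}_p;a+1)=\sum_{n_1>\cdots>n_p>0}x_{n_1}\cdots x_{n_p}=e_p(x_1,x_2,\dots),
\]
the $p$-th elementary symmetric function of the $x_n$. Since $m\ge1$, all series appearing below converge absolutely and uniformly on compact subsets of $\mathbb{C}\backslash\N^-$ in the variable $a$, so term-by-term differentiation is legitimate throughout.

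Next I would record the one-variable rule obtained by differentiating $\zeta(s;a+1)=\sum_{n\ge1}(n+a)^{-s}$ term by term,
\[
\frac{(-1)^k}{k!}\frac{\partial^k}{\partial a^k}\zeta(s;a+1)=\binom{s+k-1}{k}\zeta(s+k;a+1),
\]
together with the Newton--Girard identity, read off from $\sum_{p\ge0}e_p t^p=\prod_{n\ge1}(1+x_n t)=\exp\big(\sum_{j\ge1}\tfrac{(-1)^{j-1}}{j}p_j t^j\big)$, where $p_j=\sum_{n\ge1}x_n^j=\zeta(j(m+1);a+1)$, namely
\[
\zeta(\{m+1\}_p;a+1)=\sum_{\substack{c_1+2c_2+\cdots+pc_p=p\\ c_1,\dots,c_p\ge0}}\ \prod_{j=1}^p\frac{(-1)^{(j-1)c_j}}{c_j!\,j^{c_j}}\,\zeta\big(j(m+1);a+1\big)^{c_j}.
\]
Then I would apply $\tfrac{(-1)^k}{k!}\partial_a^k$ to this identity. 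For a fixed admissible tuple $(c_1,\dots,c_p)$ the summand is a product of $|{\bf c}|_p=c_1+\cdots+c_p$ factors, of which $c_i$ equal $\zeta(im+i;a+1)$; distributing the $k$ derivatives over these factors by the general Leibniz rule (as used in the proof of Theorem~\ref{thm-MPL-MAL-Para}), and noting that the global $1/k!$ and the sign $(-1)^k$ split factor-wise because the $k_{j_i}$ sum to $k$, each factor contributes, via the one-variable rule with $s=im+i$, a term $\binom{im+i-1+k_{j_i}}{k_{j_i}}\zeta(im+i+k_{j_i};a+1)$. Collecting these over all factors yields precisely the double product over $i$ and $j_i$ in \eqref{eq-differ-k-HMZVs-HZV}.

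The main thing to be careful about is the combinatorial bookkeeping: one must fix a consistent labelling $j_i\in\{|{\bf c}|_{i-1}+1,\dots,|{\bf c}|_i\}$ of the Leibniz factors within each block, and then verify that the scalar weights $1/(c_j!\,j^{c_j})$ from Newton--Girard, the multinomial factor $k!/\prod k_{j_i}!$ from Leibniz, the overall $1/k!$, and the signs all recombine into the stated form; no single step is deep, but the indices must be tracked precisely. (Alternatively one may simply cite \cite[Thm.~3.5]{X2020}; the argument above makes the statement self-contained.)
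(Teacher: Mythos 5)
The paper does not actually prove this lemma: it is imported verbatim with the citation ``(cf.\ \cite[Thm.\ 3.5]{X2020})'', so there is no in-paper proof to compare against. Your argument is correct and self-contained. The three ingredients all check out: (i) by \eqref{HTMHSs-r}--\eqref{HTMZVs+HTMZSVs-r}, $\zeta(\{m+1\}_p;a+1)$ is indeed $e_p$ of $x_n=(n+a)^{-(m+1)}$, and since $m+1\ge 2$ everything converges locally uniformly in $a$ away from $\N^-$, justifying termwise differentiation; (ii) the exponential form of Newton--Girard produces exactly the outer sum over $(c_1,\dots,c_p)$ with weights $(-1)^{(j-1)c_j}/(c_j!\,j^{c_j})$ and $p_j=\zeta(jm+j;a+1)$; (iii) applying $\tfrac{(-1)^k}{k!}\partial_a^k$ to the product of the $|{\bf c}|_p$ zeta factors via Leibniz, the multinomial coefficient $k!/\prod k_{j_i}!$ cancels against the factors $k_{j_i}!$ coming from $\partial_a^{k_{j_i}}\zeta(s;a+1)=(-1)^{k_{j_i}}k_{j_i}!\binom{s+k_{j_i}-1}{k_{j_i}}\zeta(s+k_{j_i};a+1)$, the signs cancel since $\sum k_{j_i}=k$, and one is left with precisely the inner sum in \eqref{eq-differ-k-HMZVs-HZV}. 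This is the natural route to the identity (and very much in the spirit of the Newton--Girard-type Lemmas \ref{lem-S-1} and \ref{lem-S-2} that the paper does use elsewhere), so it serves as a legitimate replacement for the external citation.
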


\begin{thm}\label{thm-MHSs-MSHSs-para-HZVs}
For any $k\in \N_0,p\in \N$ and $2\leq m\in \N$, and $\alpha\in \mathbb{C}$ with $\Re(\alpha)<1$, we have
\begin{align}\label{EQ-MHSs-MSHSs-para-HZVs}
&\sum_{n=1}^\infty \frac{\ze_{n-1}(\{1\}_{m-2},\{2,\{1\}_{m-2}\}_{p-1})\ze_n^\star(\{1\}_k;1-\alpha)}{n^2\binom{n-\alpha}{n}}\nonumber\\
&\quad=\sum\limits_{c_1+2c_2+\cdots+pc_p=p\atop c_1,c_2,\ldots,c_p \geq 0} \left\{\prod\limits_{j=1}^p \frac{(-1)^{(j-1)c_j}}{c_j!j^{c_j}} \right\}\nonumber\\
&\quad\quad\times\sum\limits_{k_1+k_2+\cdots+k_{{|\bf c|}_p}=k\atop k_1,k_2,\ldots,k_{{|\bf c|}_p}\geq 0} \prod_{i=1}^p \prod\limits_{j_i={|\bf c|}_{i-1}+1}^{{|\bf c|}_i} \binom{im-1+k_{j_i}}{k_{j_i}} \zeta(im+k_{j_i};1-\alpha).
\end{align}
\end{thm}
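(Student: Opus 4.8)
The plan is to obtain \eqref{EQ-MHSs-MSHSs-para-HZVs} as a specialization of Theorem~\ref{thm-mtss-mhs-cb-aphla}, and then to evaluate the resulting linear combination of Hurwitz-type multiple zeta values by means of Lemma~\ref{lem-General-HTMZV-HZV}. First I would apply \eqref{eq-mtss-mhs-cb-aphla} to the composition
\[\bfk=(\{1\}_{m-1},\{2,\{1\}_{m-2}\}_{p-1}),\]
which, by the identity recorded just before Lemma~\ref{lem-General-HTMZV-HZV}, equals $(\{m\}_{p-1},m-1)^\vee$, so that $\big(\overleftarrow{\bfk}^\vee\big)_+=(\{m\}_p)$. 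For this $\bfk$ one has $k_1=1$ (hence $n^{k_1+1}=n^2$) and $(k_2,\dots,k_r)=(\{1\}_{m-2},\{2,\{1\}_{m-2}\}_{p-1})$; moreover $|\bfk|=pm-1$ and $\dep(\bfk)=p(m-1)$, so $|\bfk|+1-\dep(\bfk)=p$. Hence the left-hand side of \eqref{EQ-MHSs-MSHSs-para-HZVs} is precisely the left-hand side of \eqref{eq-mtss-mhs-cb-aphla}, and Theorem~\ref{thm-mtss-mhs-cb-aphla} rewrites it as
\[\sum_{|\bfj|=k,\ \dep(\bfj)=p} B\big((\{m\}_p);\bfj\big)\,\ze\big((\{m\}_p)+\bfj;1-\alpha\big)
=\sum_{j_1+\cdots+j_p=k}\Big\{\prod_{i=1}^{p}\binom{m+j_i-1}{j_i}\Big\}\ze(m+j_1,\dots,m+j_p;1-\alpha).\]

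Next I would recognize this sum as a $k$-th derivative. By \eqref{EQ-MPL-LOG-A-change} with $m_1=\cdots=m_p=m$ it equals
\[\frac{1}{k!}\frac{d^{k}}{d\alpha^{k}}\ze(\{m\}_p;1-\alpha)=\frac{(-1)^{k}}{k!}\frac{\partial^{k}}{\partial a^{k}}\ze(\{m\}_p;a+1)\Big|_{a=-\alpha}.\]
Then I would apply Lemma~\ref{lem-General-HTMZV-HZV} with its parameter ``$m$'' replaced by $m-1$ and with $a=-\alpha$ (admissible, since $\Re(\alpha)<1$ forces $-\alpha\notin\N^-$). Substituting $i(m-1)+i=im$, $i(m-1)+i-1=im-1$, and $a+1=1-\alpha$ into \eqref{eq-differ-k-HMZVs-HZV}, the right-hand side of that lemma becomes verbatim the right-hand side of \eqref{EQ-MHSs-MSHSs-para-HZVs}, which finishes the argument.

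The case $k=0$, excluded from the hypotheses of Lemma~\ref{lem-General-HTMZV-HZV}, can be handled directly: the left side of \eqref{EQ-MHSs-MSHSs-para-HZVs} reduces to $\ze(\{m\}_p;1-\alpha)$ by Theorem~\ref{thm-mtss-mhs-cb-aphla}, while on the right side only the all-zero tuple $(k_1,\dots,k_{|{\bf c}|_p})$ survives, so it collapses to $\sum_{c_1+2c_2+\cdots+pc_p=p}\prod_{j}\frac{(-1)^{(j-1)c_j}}{c_j!\,j^{c_j}}\,\ze(jm;1-\alpha)^{c_j}$; this is exactly the Newton--Girard expansion of the $p$-th elementary symmetric function of $\{1/(n-\alpha)^m\}_{n\geq1}$, namely $\ze(\{m\}_p;1-\alpha)$.

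No new estimate or integral is needed; the only point demanding care is the combinatorial bookkeeping in the first step---verifying that $\bfk=(\{1\}_{m-1},\{2,\{1\}_{m-2}\}_{p-1})$ is the composition dual to the diagonal index $(\{m\}_p)$, i.e.\ that $\big(\overleftarrow{\bfk}^\vee\big)_+=(\{m\}_p)$, that $k_1=1$, and that $|\bfk|+1-\dep(\bfk)=p$, so that the sum produced by Theorem~\ref{thm-mtss-mhs-cb-aphla} is genuinely a $k$-th derivative of the depth-$p$ diagonal HTMZV appearing in \eqref{eq-differ-k-HMZVs-HZV}. Since these facts are already established in the discussion preceding Lemma~\ref{lem-General-HTMZV-HZV}, the proof reduces to a clean concatenation of Theorem~\ref{thm-mtss-mhs-cb-aphla}, \eqref{EQ-MPL-LOG-A-change}, and Lemma~\ref{lem-General-HTMZV-HZV}.
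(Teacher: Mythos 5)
Your proposal is correct and takes essentially the same route as the paper, which likewise deduces the theorem by concatenating Theorem~\ref{thm-mtss-mhs-cb-aphla}, the derivative identity \eqref{EQ-MPL-LOG-A-change}, and Lemma~\ref{lem-General-HTMZV-HZV} with $m+1$ replaced by $m$. Your write-up is in fact slightly more careful than the paper's one-line proof: you correctly specialize $a=-\alpha$ (so that $a+1=1-\alpha$) where the paper loosely writes ``$a=1-\alpha$'', and you separately handle the case $k=0$, which is allowed in the theorem but excluded from the hypotheses of the lemma.
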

\begin{proof}
This theorem follows immediately from \eqref{eq-mtss-mhs-cb-aphla}, \eqref{EQ-MPL-LOG-A-change} and \eqref{eq-differ-k-HMZVs-HZV} with $a=1-\alpha$ and $m+1$ replaced by $m$.
\end{proof}

Setting $p=1,2$ in \eqref{EQ-MHSs-MSHSs-para-HZVs}, we obtain
\begin{align*}
&\sum_{n=1}^\infty \frac{\ze_{n-1}(\{1\}_{m-2})\ze^\star_n(\{1\}_k;1-\alpha)}{n^2\binom{n-\alpha}{n}}=\binom{m+k-1}{k}\ze(m+k;1-\alpha),\\
&\sum_{n=1}^\infty \frac{\ze_{n-1}(\{1\}_{m-2},2,\{1\}_{m-2})\ze^\star_n(\{1\}_k;1-\alpha)}{n^2\binom{n-\alpha}{n}}
=-\frac{1}{2}\binom{2m+k-1}{k}\zeta(2m+k;1-\alpha)\\
&\qquad +\frac{1}{2}\sum\limits_{k_1+k_2=k\atop k_1,k_2\geq 0}\binom{m+k_1-1}{k_1}\binom{m+k_2-1}{k_2} \zeta(m+k_1;1-\alpha)\zeta(m+k_2;1-\alpha) .
\end{align*}

\begin{thm}\label{thm-mtss-mhs-mhss-cb-aphla-ex} For $k\in \N_0,p\in \N$ and $2\leq m\in \N$ with $\alpha\in \mathbb{C}\backslash \Z$, we have
\begin{align}\label{eq-mtss-mhs-mhss-cb-aphla-ex}
&\delta_{0,k}\ze(\{m\}_p)+\sum_{l=1}^p (-1)^{l-1}\ze(\{m\}_{p-l})\sum_{n=1}^\infty \binom{n+\alpha-1}{n}\frac{\ze_n(\{1\}_k;\alpha)\ze_n^\star(\{\{1\}_{m-2},2\}_{l-1},\{1\}_{m-1})}{n}\nonumber\\
&\quad=\sum\limits_{c_1+2c_2+\cdots+pc_p=l\atop c_1,c_2,\ldots,c_p \geq 0} \left\{\prod\limits_{j=1}^p \frac{(-1)^{(j-1)c_j}}{c_j!j^{c_j}} \right\}\nonumber\\
&\quad\quad\times\sum\limits_{k_1+k_2+\cdots+k_{{|\bf c|}_p}=k\atop k_1,k_2,\ldots,k_{{|\bf c|}_p}\geq 0} \prod_{i=1}^p \prod\limits_{j_i={|\bf c|}_{i-1}+1}^{{|\bf c|}_i} \binom{im-1+k_{j_i}}{k_{j_i}} \zeta(im+k_{j_i};1-\alpha).
\end{align}
\end{thm}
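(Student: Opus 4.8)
The plan is to deduce this identity from Theorem~\ref{thm-mtss-mhs-mhss-cb-aphla} in exactly the way Theorem~\ref{thm-MHSs-MSHSs-para-HZVs} was deduced from Theorem~\ref{thm-mtss-mhs-cb-aphla}. I would apply \eqref{eq-mtss-mhs-mhss-cb-aphla} to the single composition
\[
\bfk=\big(\{1\}_{m-1},\{2,\{1\}_{m-2}\}_{p-1}\big)=(\{m\}_{p-1},m-1)^\vee ,
\]
the Hoffman dual of $(\{m\}_{p-1},m-1)$, which is precisely the index already used in Theorem~\ref{thm-MHSs-MSHSs-para-HZVs}. For this $\bfk$ one records $k_1=1$, $\dep(\bfk)=p(m-1)$, $|\bfk|+1-\dep(\bfk)=p$, and, since Hoffman-dualization commutes with reversal and $(\ola\bfk^\vee)_+$ is the ordinary dual index of $\bfk_+$, also $(\ola\bfk^\vee)_+=(\{m\}_p)$.

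The right-hand side then needs nothing new. Indeed the right-hand side of \eqref{eq-mtss-mhs-mhss-cb-aphla} depends on $\bfk$ only through $(\ola\bfk^\vee)_+$ and is symbol-for-symbol the right-hand side of \eqref{eq-mtss-mhs-cb-aphla}, so for our $\bfk$ it is the very expression already evaluated in the proof of Theorem~\ref{thm-MHSs-MSHSs-para-HZVs}: rewriting it via \eqref{EQ-MPL-LOG-A-change} with $(m_1,\dots,m_p)=(\{m\}_p)$ as $\tfrac1{k!}\tfrac{d^k}{d\alpha^k}\ze(\{m\}_p;1-\alpha)$, and then applying Lemma~\ref{lem-General-HTMZV-HZV} with $a=-\alpha$ and with $m$ there replaced by $m-1$, turns it into exactly the double sum over $(c_1,\dots,c_p)$ and $(k_{j_i})$ on the right of \eqref{eq-mtss-mhs-mhss-cb-aphla-ex} (with the constraint $c_1+2c_2+\cdots+pc_p=p$).

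The substance is in simplifying the left-hand side of \eqref{eq-mtss-mhs-mhss-cb-aphla}. Since $k_1=1$, the middle sum (over $j=1,\dots,k_1-1$) is empty. In the final sum, the inner sum over $j$ is nonempty only when $k_{l+1}\ge 2$; the $2$'s of $\bfk$ sit exactly in the positions $i(m-1)+1$ for $1\le i\le p-1$, and with the convention $k_{r+1}:=2$ this restricts the outer index to $l=i(m-1)$, $i=1,\dots,p$, each leaving only the term $j=1$. For $l=i(m-1)$ I would verify the three ingredients: the sign exponent is $\ora{\mid\bfk_l\mid}-l=i-1$; the coefficient is
\[
\ze\big(k_{l+1},k_{l+2},\dots,k_r\big)=\ze\big(2,\{1\}_{m-2},\{2,\{1\}_{m-2}\}_{p-1-i}\big)=\ze(\{m\}_{p-i})
\]
by $\MZV$ duality applied to $\bfk'_+$ with $\bfk'=(\{1\}_{m-1},\{2,\{1\}_{m-2}\}_{p-1-i})$ (and $=\ze(\emptyset)=1=\ze(\{m\}_{0})$ for $i=p$, via the $\emptyset$-convention of \eqref{Eq-x-n-MPL-IN}); and the star argument regroups,
\[
\ze^\star_n\big(k_2,\dots,k_{i(m-1)},1\big)=\ze^\star_n\big(\{\{1\}_{m-2},2\}_{i-1},\{1\}_{m-1}\big).
\]
Together with $\delta_{0,k}\ze(k_1+1,k_2,\dots,k_r)=\delta_{0,k}\ze(\bfk_+)=\delta_{0,k}\ze(\{m\}_p)$ (duality once more), the left-hand side of \eqref{eq-mtss-mhs-mhss-cb-aphla} becomes precisely the left-hand side of \eqref{eq-mtss-mhs-mhss-cb-aphla-ex} after renaming $i$ as $l$, and the theorem follows. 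I expect the only real obstacle to be this combinatorial bookkeeping --- pinning down which indices $l$ survive, computing $\ora{\mid\bfk_l\mid}-l$, and matching the arguments of $\ze^\star_n$ and of the coefficient $\ze(\{m\}_{p-i})$ --- together with checking that every composition to which $\MZV$ duality is applied, always of the shape $(2,\{1\}_{m-2},\{2,\{1\}_{m-2}\}_q)$, is admissible, which it is since it begins with $2$.
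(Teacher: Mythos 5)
Your proposal is correct and follows essentially the same route as the paper: specialize Theorem \ref{thm-mtss-mhs-mhss-cb-aphla} to $\bfk=(\{1\}_{m-1},\{2,\{1\}_{m-2}\}_{p-1})$, identify the right-hand side as $\frac{1}{k!}\frac{d^k}{d\alpha^k}\ze(\{m\}_p;1-\alpha)$ via \eqref{EQ-MPL-LOG-A-change} and the duality $\ze(\{2,\{1\}_{m-2}\}_p)=\ze(\{m\}_p)$, then expand with Lemma \ref{lem-General-HTMZV-HZV}. The only difference is that you spell out the combinatorial bookkeeping (which indices $l$ survive, the sign $i-1$, the regrouping of the star argument) that the paper leaves implicit, and your checks are accurate.
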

\begin{proof}
By the proof of Theorem \ref{thm-mtss-mhs-mhss-cb-aphla}, we know that
\begin{align*}
 &\int_0^1 {\frac{{\rm Li}_{k_1,k_2,\ldots,k_r}(x)\log^{k}(1-x)}{x(1-x)^{\alpha}}dx}=(-1)^{k}k!\delta_{0,k}\ze(k_1+1,k_2,\ldots,k_r)\\&\quad\quad\quad\quad\quad\quad\quad\quad+(-1)^{k}k!\sum\limits_{n=1}^{\infty}\binom{n+\alpha-1}{n}\zeta_{n}(\{1\}_{k};\alpha)\int_0^1 {x^{n-1}{\rm Li}_{k_1,k_2,\ldots,k_r}(x)dx}.
\end{align*}
Setting $(k_1,\ldots,k_r)=(\{1\}_{m-1},\{2,\{1\}_{m-2}\}_{p-1})$ in \eqref{eq-mtss-mhs-mhss-cb-aphla} and applying \eqref{EQ-MPL-LOG-A-change} yield
\begin{align*}
\delta_{0,k}\ze(\{m\}_p)&+\sum_{l=1}^p (-1)^{l-1}\ze(\{m\}_{p-l})\sum_{n=1}^\infty \binom{n+\alpha-1}{n}\frac{\ze_n(\{1\}_k;\alpha)\ze_n^\star(\{\{1\}_{m-2},2\}_{l-1},\{1\}_{m-1})}{n}\\
&=\frac1{k!} \frac{d^k}{d\alpha^k}\ze(\{m\}_p;1-\alpha),
\end{align*}
where we have used the duality relation
\[\ze(\{2,\{1\}_{m-2}\}_p)=\ze(\{m\}_p).\]
Finally, applying \eqref{eq-differ-k-HMZVs-HZV} yields the desired evaluation.
\end{proof}

Setting $p=1$ and $2$ in \eqref{eq-mtss-mhs-mhss-cb-aphla-ex}, we obtain
\begin{align}
&\sum_{n=1}^\infty \binom{n+\alpha-1}{n}\frac{\ze_n(\{1\}_k;\alpha)\ze_n^\star(\{1\}_{m-1})}{n}=\binom{m+k-1}{k}\ze(m+k;1-\alpha)-\delta_{0,k}\ze(m),\label{for-case1}\\
&\sum_{n=1}^\infty \binom{n+\alpha-1}{n}\frac{\ze_n(\{1\}_k;\alpha)\ze_n^\star(\{1\}_{m-2},2,\{1\}_{m-1})}{n}\nonumber\\
&=-\frac{1}{2}\sum\limits_{k_1+k_2=k\atop k_1,k_2\geq 0}\binom{m+k_1-1}{k_1}\binom{m+k_2-1}{k_2} \zeta(m+k_1;1-\alpha)\zeta(m+k_2;1-\alpha)+\delta_{0,k}\ze(m,m)\nonumber\\&\quad+\frac{1}{2}\binom{2m+k-1}{k}\zeta(2m+k;1-\alpha)+\binom{m+k-1}{k}\ze(m)\ze(m+k;1-\alpha).\label{for-case2}
\end{align}
In particular, letting $\alpha\rightarrow 0$ in \eqref{for-case2} and applying \eqref{Limit-mhns-para} we recover \cite[Eq. (3.24)]{X2020}.

Clearly, from Theorem \ref{thm-mtss-mhs-mhss-cb-aphla-ex} we can conclude that the parametric Ap\'ery-type series
\[\sum_{n=1}^\infty \binom{n+\alpha-1}{n}\frac{\ze_n(\{1\}_k;\alpha)\ze_n^\star(\{\{1\}_{m-2},2\}_{p-1},\{1\}_{m-1})}{n}\]
are expressible in terms of products of Hurwitz zeta values.

In fact, we can prove the following more general theorem.
\begin{thm}\label{thm-mzvs-cb-mhs-mhss-hmzv}
For any $k\in \N_0,\alpha\in \mathbb{C}\backslash \Z$ and $\bfm=(m_1,m_2,\ldots,m_p)\in\N^p$ with $m_1,m_p\geq 2$,
\begin{align}\label{eq-mzvs-cb-mhs-mhss-hmzv}
&\sum_{j=1}^p (-1)^{j-1} \ze(m_p,\ldots,m_{j+1})\sum_{n=1}^\infty \binom{n+\alpha-1}{n}\frac{\ze_n(\{1\}_k;\alpha)\ze_n^\star((m_1-1,m_2,\ldots,m_j)^\vee)}{n}\nonumber\\
&=\frac1{k!}\frac{d^k}{d\alpha^k}\ze(m_p,\ldots,m_1;1-\alpha)-\delta_{0,k}\ze(m_p,\ldots,m_1)\nonumber\\
&=\sum_{i_1+\cdots+i_p=k\atop i_j\geq 0,\ \forall j}\left\{\prod\limits_{j=1}^p\binom{m_j+i_j-1}{i_j}\right\}\ze(m_p+i_p,\ldots,m_1+i_1;1-\alpha)-\delta_{0,k}\ze(m_p,\ldots,m_1).
\end{align}
\end{thm}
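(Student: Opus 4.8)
The plan is to evaluate the single integral
\[
I:=\int_0^1 \frac{\Li_{\bfk}(x)\log^k(1-x)}{x(1-x)^\alpha}\,dx,\qquad \bfk:=(m_1,m_2,\ldots,m_{p-1},m_p-1)^\vee,
\]
in two different ways and to equate the results. Here $\bfk$ is a genuine composition because $m_p\ge 2$, and its first part $k_1$ equals $1$ because the first part of $(m_1,\ldots,m_{p-1},m_p-1)$ is $m_1\ge 2$; this is exactly what makes every power $n^{k_1}$ occurring below collapse to $n$. Comparing the two evaluations of $I$ will produce \eqref{eq-mzvs-cb-mhs-mhss-hmzv}.

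\emph{First evaluation (via Theorem~\ref{thm-MPL-MAL-Para}).} Using the identity $\bigl(\overleftarrow{\bfk}^\vee\bigr)_+=(1,k_r,\ldots,k_1)^\vee$ recorded just before Theorem~\ref{thm-MPL-MAL-Para}, together with the fact that Hoffman duality commutes with reversal, one checks $\bigl(\overleftarrow{\bfk}^\vee\bigr)_+=(m_p,m_{p-1},\ldots,m_1)$. Hence \eqref{EQ-MPL-LOG-A}, written in the expanded form \eqref{EQ-MPL-LOG-A-change}, gives
\begin{align*}
I&=(-1)^k\,\frac{d^k}{d\alpha^k}\,\ze(m_p,\ldots,m_1;1-\alpha)\\
&=(-1)^kk!\sum_{i_1+\cdots+i_p=k,\ i_j\ge 0}\Bigl\{\prod_{j=1}^p\binom{m_j+i_j-1}{i_j}\Bigr\}\ze(m_p+i_p,\ldots,m_1+i_1;1-\alpha).
\end{align*}
Moreover $\bfk_+=(k_1+1,k_2,\ldots,k_r)$ is precisely the ordinary dual index of $\bigl(\overleftarrow{\bfk}^\vee\bigr)_+=(m_p,\ldots,m_1)$, so $\ze(k_1+1,k_2,\ldots,k_r)=\ze(m_p,\ldots,m_1)$ by MZV duality.

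\emph{Second evaluation (via Theorem~\ref{thm-mtss-mhs-mhss-cb-aphla}).} Apply \eqref{eq-mtss-mhs-mhss-cb-aphla} to the same $\bfk$. Since $k_1=1$, the sum $\sum_{j=1}^{k_1-1}$ in \eqref{eq-mtss-mhs-mhss-cb-aphla} is empty and each $n^{k_1}$ equals $n$; transposing $\delta_{0,k}\ze(k_1+1,\ldots,k_r)=\delta_{0,k}\ze(m_p,\ldots,m_1)$ to the right, \eqref{eq-mtss-mhs-mhss-cb-aphla} reduces to the claim that, for every $n$,
\begin{align*}
&\sum_{l=1}^{r}(-1)^{\overrightarrow{|\bfk_{l}|}-l}\sum_{j=1}^{k_{l+1}-1}(-1)^{j-1}\ze(k_{l+1}+1-j,\ora\bfk_{l+2,r})\,\ze_n^\star(\ora\bfk_{2,l},j)\\
&\qquad=\sum_{j'=1}^p(-1)^{j'-1}\ze(m_p,\ldots,m_{j'+1})\,\ze_n^\star\bigl((m_1-1,m_2,\ldots,m_{j'})^\vee\bigr);
\end{align*}
equivalently, that the right-hand side here equals $n\int_0^1 x^{n-1}\Li_{\bfk}(x)\,dx$ as expanded by the Lemma \eqref{Eq-x-n-MPL-IN} (one then multiplies by $\binom{n+\alpha-1}{n}\ze_n(\{1\}_k;\alpha)/n$ and sums over $n$, exactly as in the proof of Theorem~\ref{thm-mtss-mhs-mhss-cb-aphla}). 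I would establish this by matching the two sides term by term under Hoffman duality. Writing $\bfc:=\bfk^\vee=(m_1,\ldots,m_{p-1},m_p-1)$, cutting $\bfc$ after its $j'$-th part ($1\le j'\le p-1$) factors it as $\bfc=\bfa\cdot\bfb$ with $\bfa=(m_1,\ldots,m_{j'})$, and $\bfk=\bfc^\vee$ is then obtained from $\bfa^\vee$ and $\bfb^\vee$ by concatenating them and merging the last part of $\bfa^\vee$ into the first part of $\bfb^\vee$; this merge is precisely the combinatorics encoded in \eqref{Eq-x-n-MPL-IN} by a break point $l$ and a mass transfer $k_{l+1}\mapsto k_{l+1}+1-j$, with $l=(m_1+\cdots+m_{j'})-j'$ and $j$ the last part of $\bfa^\vee$ (and the case $j'=p$ corresponds to the term $l=r$ produced by the appended $k_{r+1}=2$). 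Tracking this correspondence, one checks for each $j'$ that $\ze_n^\star(\ora\bfk_{2,l},j)=\ze_n^\star\bigl((m_1-1,m_2,\ldots,m_{j'})^\vee\bigr)$ — the shift $m_1\mapsto m_1-1$ reflecting that $(m_1-1,m_2,\ldots,m_{j'})^\vee$ is $(m_1,\ldots,m_{j'})^\vee=\bfa^\vee$ with its single leading $1$ deleted (a leading $1$ is present exactly because $m_1\ge 2$) — that $\ze(k_{l+1}+1-j,\ora\bfk_{l+2,r})=\ze\bigl((\bfb^\vee)_+\bigr)=\ze(m_p,\ldots,m_{j'+1})$ — the last equality being again the ``$\bigl(\overleftarrow{(\cdot)}^\vee\bigr)_+$ equals the ordinary dual index'' fact, now applied to $\overleftarrow{\bfb}=(m_p-1,m_{p-1},\ldots,m_{j'+1})$ — and that $\overrightarrow{|\bfk_l|}-l=j'-j$, so the composite sign $(-1)^{\overrightarrow{|\bfk_l|}-l}(-1)^{j-1}$ equals $(-1)^{j'-1}$. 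Substituting these identities into \eqref{eq-mtss-mhs-mhss-cb-aphla} for our $\bfk$ and comparing with the first evaluation of $I$ yields \eqref{eq-mzvs-cb-mhs-mhss-hmzv}.

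The main obstacle is exactly the bookkeeping in this last step: matching, term by term and with the correct signs, the ``$(l,j)$-cut'' expansion \eqref{Eq-x-n-MPL-IN} of $\int_0^1 x^{n-1}\Li_{\bfk}(x)\,dx$ with the ``cut of $\bfm$'' expansion in the statement, and in particular reconciling the shift $m_1\mapsto m_1-1$ with dualization. I expect the cleanest route is an induction on $p$ — peeling off the first block of $\bfc$, equivalently the leading $1$ of $\bfk$, reduces the merge structure to depth $p-1$, while the base case $p=1$ is \eqref{eq-not-mhs-1}--\eqref{for-case1} — or, alternatively, a direct combinatorial argument carried out on the ``$1+\cdots+1$'' representation \eqref{eq:HdualDef} of $\bfc$, where cutting at a comma, taking $\vee$, and the mass transfer all become transparent. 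Everything else is routine: the two evaluations of $I$ are immediate from the quoted results, and the interchange of the series of Theorem~\ref{thm-More-Gen-Fun-1-x-a} with the integral is justified exactly as in the proof of Theorem~\ref{thm-mtss-mhs-mhss-cb-aphla} (near $x=1$ the integrand is $O\bigl((1-x)^{-\Re(\alpha)}|\log(1-x)|^{k+1}\bigr)$, integrable when $\Re(\alpha)<1$, and near $x=0$ it is $O(x^{\dep(\bfk)+k-1})$).
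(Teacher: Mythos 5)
Your strategy is genuinely different from the paper's and, as far as I can check, it is workable: your choice $\bfk=(m_1,\ldots,m_{p-1},m_p-1)^\vee$ is the right one (it does give $k_1=1$ and $\bigl(\overleftarrow{\bfk}^\vee\bigr)_+=(m_p,\ldots,m_1)$), the first evaluation of $I$ via \eqref{EQ-MPL-LOG-A-change} is immediate, and the $(l,j)\leftrightarrow j'$ correspondence you describe (with $l=m_1+\cdots+m_{j'}-j'$, $j$ the last part of $(m_1,\ldots,m_{j'})^\vee$, the case $j'=p$ going to $l=r$, and the middle factor matching only up to MZV duality) checks out on examples. The paper takes a much shorter route that bypasses all of this: it quotes the iterated-integral identity \cite[Eq.~(2.8)]{XuZhao2021a}, which directly expresses the finite sum $\sum_{j'=1}^p(-1)^{j'-1}\ze(m_p,\ldots,m_{j'+1})\,\ze_n^\star\bigl((m_1-1,\ldots,m_{j'})^\vee\bigr)$ as $n$ times a single iterated integral ending in $t^{n-1}dt$; multiplying by $\binom{n+\alpha-1}{n}/n$, summing with \eqref{eq-Gen-Fun-1-x-a}, and reversing the path $t\to 1-t$ then yields $\ze(m_p,\ldots,m_1;1-\alpha)-\ze(m_p,\ldots,m_1)$ in one stroke, after which one differentiates $k$ times using \eqref{eq-diff-pcb}. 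In effect, the combinatorial matching you must perform is exactly the content of that cited identity composed with Lemma~\eqref{Eq-x-n-MPL-IN}, so your proof amounts to re-deriving it; what the paper's route buys is that one never has to open up the $(l,j)$-expansion at all.

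The one real issue is that this matching is the entire substance of the theorem beyond Theorems~\ref{thm-MPL-MAL-Para} and \ref{thm-mtss-mhs-mhss-cb-aphla}, and you do not prove it: you state the three required identities (on the star-sum factor, the zeta factor, and the sign), verify them in spirit, and then defer the general case to an unexecuted induction on $p$ or an unspecified direct combinatorial argument. As written, the proposal therefore has a gap precisely at its central step. The gap is fillable — the bijection you describe is correct, and the cleanest completion is probably the word-level argument on the representation \eqref{eq:HdualDef}, where cutting $\bfk^\vee$ at the comma after its $j'$-th part, dualizing each piece, and transferring one unit of mass across the cut visibly reproduces the $(l,j)$-term of \eqref{Eq-x-n-MPL-IN} with $\ze_n^\star(\ora\bfk_{2,l},j)=\ze_n^\star\bigl((m_1-1,\ldots,m_{j'})^\vee\bigr)$ and $\ze(k_{l+1}+1-j,\ora\bfk_{l+2,r})=\ze(m_p,\ldots,m_{j'+1})$ after one application of MZV duality — but until that is written out the argument is not complete. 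If you want to keep your two-evaluations framework, the fastest fix is to replace the hand-made matching by the citation the paper uses, i.e.\ \cite[Eq.~(2.8)]{XuZhao2021a} specialized at $x=0$ with $m_1$ replaced by $m_1-1$.
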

\begin{proof}
Replacing $m_1$ by $m_1-1$ in \cite[Eq. (2.8)]{XuZhao2021a} and setting $x=0$, we get the iterated integral expression
\begin{align}\label{IMP-2}
&\sum\limits_{j=1}^p (-1)^{j-1}\ze(m_p,\ldots,m_{j+1})\zeta^\star_n((m_1-1,m_2,\ldots,m_j)^\vee)\nonumber\\
&\quad=n\int_0^1
\left(\frac{dt}{1-t}\right)^{m_p-1}\frac{dt}{t} \cdots \left(\frac{dt}{1-t}\right)^{m_2-1}\frac{dt}{t}\left(\frac{dt}{1-t}\right)^{m_1-1}t^{n-1}dt.
\end{align}
Multiplying \eqref{IMP-2} by $\binom{n+\alpha-1}{n}/n$, summing up, and then applying \eqref{eq-Gen-Fun-1-x-a}, we obtain
the following relation by the definition of Hurwitz-type MZVs
\begin{align*}
&\sum_{j=1}^p (-1)^{j-1} \ze(m_p,\ldots,m_{j+1})\sum_{n=1}^\infty \binom{n+\alpha-1}{n}\frac{\ze_n^\star((m_1-1,m_2,\ldots,m_j)^\vee)}{n}\\
&\quad=\int_0^1
\left(\frac{dt}{1-t}\right)^{m_p-1}\frac{dt}{t} \cdots \left(\frac{dt}{1-t}\right)^{m_2-1}\frac{dt}{t}\left(\frac{dt}{1-t}\right)^{m_1-1}\left(\frac{dt}{t(1-t)^\alpha}-\frac{dt}{t}\right)dt\\
&\quad=\int_0^1 \left(\frac{t^{-\alpha}dt}{1-t}-\frac{dt}{1-t}\right)\left(\frac{dt}{t}\right)^{m_1-1}\frac{dt}{1-t}\left(\frac{dt}{t}\right)^{m_2-1}\cdots\frac{dt}{1-t}\left(\frac{dt}{t}\right)^{m_p-1}\\
&\quad=\ze(m_p,\ldots,m_1;1-\alpha)-\ze(m_p,\ldots,m_1).
\end{align*}
Now, differentiating the above formula $k$ times with respect to $\alpha$ and using \eqref{eq-diff-pcb},
we can finally deduce the desired result.
\end{proof}

Clearly, letting $m_1=\cdots=m_p=m\geq 2$ in Theorem \ref{thm-mzvs-cb-mhs-mhss-hmzv} yields Theorem \ref{thm-mtss-mhs-mhss-cb-aphla-ex}. In particular, from \eqref{eq-More-Gen-Fun-1-x-a} and \eqref{Eq-GF-Log-k}, we have
\begin{align}\label{Limit-mhns-para}
\lim_{\alpha\rightarrow 0}\binom{n+\alpha-1}{n}\ze_n(\{1\}_k;\alpha)=\frac{\ze_{n-1}(\{1\}_{k-1})}{n}\quad \text{for }k,n\in\N.
\end{align}
Hence, taking $\alpha\rightarrow 0$ in \eqref{eq-mzvs-cb-mhs-mhss-hmzv} yields
\begin{align}\label{eq-mzvs-cb-mhs-mhss-hmzv-=0}
&\sum_{j=1}^p (-1)^{j-1} \ze(m_p,\ldots,m_{j+1})\sum_{n=1}^\infty \frac{\ze_{n-1}(\{1\}_{k-1})\ze_n^\star((m_1-1,m_2,\ldots,m_j)^\vee)}{n^2}\nonumber\\
&\quad=\sum_{i_1+\cdots+i_p=k\atop i_j\geq 0,\ \forall j}\left\{\prod\limits_{j=1}^p\binom{m_j+i_j-1}{i_j}\right\}\ze(m_p+i_p,\ldots,m_1+i_1).
\end{align}

\section{Ap\'ery-Type series with two parametric binomial coefficients}

In this section, we will use the iterated integrals to evaluate some explicit relations of Ap\'ery-type series involving two parametric binomial coefficients and Hurwitz-type multiple harmonic (star) sums in terms of Hurwitz-type MZVs. To prove the main theorems, we need the following lemma.

\begin{lem}\label{lem-dual-itearted-in} \emph{(cf. \cite[(1.6.1-2)]{KTChen1971})}
If $f_i\ (i=1,\dotsc,m)$ are integrable real functions, then the following identity holds:
\begin{equation*}
 g\left( {{f_1},\dotsc,{f_m}} \right) + {\left( {-1} \right)^m}g\left( {{f_m},\dotsc,{f_1}} \right)
 = \sum\limits_{i = 1}^{m-1} {{{\left( {-1} \right)}^{i-1}}g\left( {{f_i},{f_{i-1}}, \cdots ,{f_1}} \right)} g\left( {{f_{i + 1}},{f_{i + 2}} \cdots ,{f_m}} \right),
\end{equation*}
where $g\left( {{f_1}, \dotsc,{f_m}} \right)$ is defined by
\[g\left( {{f_1},\dotsc,{f_m}} \right): = \int_{0 < {t_m} <  \cdots < {t_1} < 1}
f_m(t_m)\cdots f_1(t_1)dt_1\cdots dt_m.\]
\end{lem}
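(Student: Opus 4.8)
The plan is to prove the identity by induction on $m$, viewing it as the iterated-integral form of the antipode axiom in the shuffle Hopf algebra. For $m=1$ the right-hand side is the empty sum and the left-hand side is $g(f_1)+(-1)g(f_1)=0$. For $m=2$ the assertion is $g(f_1,f_2)+g(f_2,f_1)=g(f_1)\,g(f_2)$, which is Fubini's theorem for the square $(0,1)^2$ split along the diagonal $\{t_1=t_2\}$.

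For the inductive step I would first record the \emph{insertion formula}: for integrable functions $h,h_1,\dots,h_\ell$,
\[
 g(h)\,g(h_1,\dots,h_\ell)=\sum_{j=0}^{\ell}g(h_1,\dots,h_j,\,h,\,h_{j+1},\dots,h_\ell),
\]
obtained by decomposing the product domain $\{0<t_\ell<\cdots<t_1<1\}\times(0,1)$ according to the location of the auxiliary variable among $t_1,\dots,t_\ell$ (a single use of Fubini). It is convenient to put the target identity in the balanced form $\sum_{i=0}^m(-1)^i P_i Q_i=0$, with $P_i:=g(f_i,f_{i-1},\dots,f_1)$, $Q_i:=g(f_{i+1},\dots,f_m)$ and $P_0=Q_m:=1$. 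Then I would apply the statement in depth $m-1$ to the truncated block $(f_2,\dots,f_m)$, multiply through by the single integral $g(f_1)$, expand every resulting product $g(f_1)\,g(\,\cdot\,)$ by the insertion formula, and sort the terms: the contributions in which $f_1$ lands in the innermost slot reconstitute the reversed prefixes $g(f_i,\dots,f_1)=P_i$, while every other insertion term appears a second time with the opposite sign and cancels in consecutive index pairs. What remains is exactly $\sum_{i=0}^m(-1)^i P_i Q_i$, which is therefore $0$.

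There is also a conceptual one-line explanation bypassing the induction. By Chen's shuffle relation, the linear map sending a word $f_{j_1}\cdots f_{j_n}$ to $g(f_{j_n},\dots,f_{j_1})$ is an algebra homomorphism from the shuffle algebra to $\mathbb{R}$. The displayed identity is the image under this homomorphism of the antipode axiom
\[
 \sum_{w=uv}u\shuffle S(v)=\varepsilon(w)\qquad(|w|=m\ge 1)
\]
for the deconcatenation coproduct with antipode $S(x_1\cdots x_n)=(-1)^{n}\,x_n\cdots x_1$; equivalently, it expresses that the Chen (holonomy) series of the segment $[0,1]$ and of its reversal are mutually inverse.

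The step I expect to be the main obstacle is the combinatorial bookkeeping in the inductive argument: after the insertion-formula expansion one has a large alternating sum, and one must isolate the reversed-prefix terms with the correct signs $(-1)^{i-1}$ and verify that everything else telescopes away. No analytic subtlety is involved once the insertion formula and a fixed indexing convention (which function is innermost, how reversal interacts with deconcatenation) are pinned down; the cancellation is then purely formal.
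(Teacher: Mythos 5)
You should first be aware that the paper does not prove this lemma at all: it is quoted directly from Chen's 1971 paper (formulas (1.6.1)--(1.6.2)), where it amounts to the statement that the Chen series of a path and of its reversal are mutually inverse. So any complete argument is necessarily ``a different route'' from the paper's. Your identification of the identity as the antipode axiom for the shuffle Hopf algebra with deconcatenation coproduct, transported by the homomorphism $f_{j_1}\cdots f_{j_n}\mapsto g(f_{j_n},\dots,f_{j_1})$, is exactly right, and that second argument is a legitimate proof modulo two standard facts (Chen's shuffle relation and the formula $S(x_1\cdots x_n)=(-1)^nx_n\cdots x_1$ for the antipode). Note, however, that the antipode formula is precisely the word-level version of the identity being proved, so quoting it comes close to assuming the conclusion unless you are content to cite it.

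The inductive argument, which is the part you actually work out, has a gap in the cancellation step. Put the target in your balanced form $\sum_{i=0}^m(-1)^iP_iQ_i=0$. Multiplying the depth-$(m-1)$ identity for $(f_2,\dots,f_m)$ by $g(f_1)$ gives $\sum_{j=0}^{m-1}(-1)^j\bigl[g(f_1)\,g(f_{j+1},\dots,f_2)\bigr]Q_{j+1}=0$, and the innermost insertions indeed reconstitute $(-1)^jP_{j+1}Q_{j+1}$. But the remaining insertion terms do \emph{not} ``appear a second time with the opposite sign and cancel in consecutive index pairs'': each is still multiplied by the nontrivial suffix integral $Q_{j+1}$, so distinct $j$'s produce genuinely different products, and what must actually be shown is that these leftovers sum to $-P_0Q_0=-g(f_1,\dots,f_m)$. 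Already for $m=3$ the leftover is $-g(f_1,f_2)\,g(f_3)+g(f_3,f_1,f_2)+g(f_1,f_3,f_2)$, and seeing that this equals $-g(f_1,f_2,f_3)$ requires a \emph{further} application of the insertion formula (to $g(f_1,f_2)\,g(f_3)$), not a pairwise cancellation; in general the bookkeeping does not close as described. A fix that avoids all of this: decompose each product $P_iQ_i$ according to which of its two outermost variables --- the one carrying $f_i$ in $P_i$ or the one carrying $f_{i+1}$ in $Q_i$ --- is the global maximum. Writing $U_i$ for the contribution in which the $f_i$-variable dominates everything (so that below it sit the two independent chains for $f_{i-1},\dots,f_1$ and for $f_{i+1},\dots,f_m$), Fubini gives $P_iQ_i=U_i+U_{i+1}$ with the conventions $U_0=U_{m+1}=0$, and the alternating sum telescopes to $U_0+(-1)^mU_{m+1}=0$. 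This is elementary, uses nothing beyond your $m=2$ observation, and is the integral form of the one-line word identity $(x_i\cdots x_1)\shuffle(x_{i+1}\cdots x_n)=x_i\bigl[(x_{i-1}\cdots x_1)\shuffle(x_{i+1}\cdots x_n)\bigr]+x_{i+1}\bigl[(x_i\cdots x_1)\shuffle(x_{i+2}\cdots x_n)\bigr]$.
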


\begin{thm}\label{thm-duality-para-apery}
For any integer $m\geq -1$ and complex numbers $\alpha,\beta\in \mathbb{C}\backslash \N_0^-$ with $\Re(\alpha),\Re(\beta)<1$, we have
\begin{align}\label{for-duality-para-apery}
&\sum\limits_{n=1}^{\infty}\frac{\binom{n+\alpha-1}{n}}{n^{m+2}\binom{n-\beta}{n}}
+(-1)^{m}\sum\limits_{n=1}^{\infty}\frac{\binom{n+\beta-1}{n}}{n^{m+2}\binom{n-\alpha}{n}}
-\sum\limits_{n=1}^{\infty}\frac{\binom{n+\alpha-1}{n}}{n^{m+2}}
-(-1)^{m}\sum\limits_{n=1}^{\infty}\frac{\binom{n+\beta-1}{n}}{n^{m+2}}\nonumber\\
&\quad=\sum\limits_{i=1}^{m+1}(-1)^{i-1}
\sum\limits_{n=1}^{\infty}\frac{\binom{n+\beta-1}{n}}{n^{i}}
\sum\limits_{n=1}^{\infty}\frac{\binom{n+\alpha-1}{n}}{n^{m+2-i}}.
\end{align}
\end{thm}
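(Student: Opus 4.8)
The natural strategy is to recognize each of the Apéry-type series as an iterated integral against the measure $\frac{dt}{t}$ repeated $m+1$ times, sandwiched by factors of the form $\frac{dt}{1-t}$, $\frac{t^{-\alpha}\,dt}{1-t}$, or $\frac{t^{-\beta}\,dt}{1-t}$, and then to apply Chen's shuffle-type identity (Lemma \ref{lem-dual-itearted-in}) to the reversed word. Concretely, I would first use the generating-function identity \eqref{eq-Gen-Fun-1-x-a} together with $\int_0^1 x^{n-1}\log^{m}(x)\,dx=(-1)^m m!/n^{m+1}$ to write, for the basic building blocks,
\begin{align*}
\sum_{n=1}^\infty \frac{\binom{n+\alpha-1}{n}}{n^{m+2}}=\frac{(-1)^{m+1}}{(m+1)!}\int_0^1 \frac{\log^{m+1}(x)}{x}\Bigl(\frac{1}{(1-x)^\alpha}-1\Bigr)\,dx,
\end{align*}
and similarly express $\sum_n \binom{n+\alpha-1}{n}/(n^{m+2}\binom{n-\beta}{n})$ as a single iterated integral with one extra $\frac{t^{-\beta}}{1-t}$ factor (using Theorem \ref{thm-Int-log-aphla-bc} to handle the $1/\binom{n-\beta}{n}$ via $\int_0^1 x^{n-1}(1-x)^{-\beta}\,dx=B(n,1-\beta)$, and then expanding the remaining $\log$-free $\frac{1}{(1-x)^\alpha}$ through \eqref{eq-Gen-Fun-1-x-a}).

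**Key steps.** After this dictionary is set up, I would choose $m+1$ functions $f_1,\dots,f_{m+1}$ for Chen's lemma: take $f_1=\frac{t^{-\alpha}-1}{1-t}$ (or a variant producing the $\binom{n+\alpha-1}{n}$ weight), $f_2=\cdots=f_{m+1}=\frac1t$ — no, more carefully, I would split into two blocks of $\frac1t$'s separated by a single $\frac{1}{1-t}$-type factor so that the right side of Lemma \ref{lem-dual-itearted-in} naturally factors as a product of two lower-weight integrals, each of which is one of the single series $\sum_n \binom{n+\beta-1}{n}/n^i$ and $\sum_n \binom{n+\alpha-1}{n}/n^{m+2-i}$. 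The term $g(f_1,\dots,f_{m+1})$ and $(-1)^{m+1}g(f_{m+1},\dots,f_1)$ on the left of Chen's identity should, after the change of variables $t\mapsto 1-t$ in one of them, reproduce exactly the four series on the left-hand side of \eqref{for-duality-para-apery} — the two "mixed" series $\binom{n+\alpha-1}{n}/(n^{m+2}\binom{n-\beta}{n})$ and $\binom{n+\beta-1}{n}/(n^{m+2}\binom{n-\alpha}{n})$, minus the two "pure" series $\binom{n+\alpha-1}{n}/n^{m+2}$ and $\binom{n+\beta-1}{n}/n^{m+2}$, with the sign $(-1)^m$ coming from the reversal of $m+1$ one-forms and an additional change-of-variables sign.

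**Main obstacle.** I expect the principal difficulty to be bookkeeping: getting the one-forms and the substitutions $t\mapsto 1-t$ (and possibly $t\mapsto t$ composed with the $\alpha\leftrightarrow\beta$ asymmetry) arranged so that the $-1$'s subtracted inside each "$(1-x)^{-\alpha}-1$" factor land correctly, and so that the boundary/convergence issues at $t=0$ and $t=1$ (where $t^{-\alpha}$ or $t^{-\beta}$ could be singular for $\Re(\alpha),\Re(\beta)>0$) are controlled — this is precisely why the hypothesis $\Re(\alpha),\Re(\beta)<1$ appears, and one should check that each individual iterated integral converges under it. The case $m=-1$ is degenerate (the product on the right is empty) and should be verified directly: it reduces to the elementary identity $\sum_n \binom{n+\alpha-1}{n}/(n\binom{n-\beta}{n})-\sum_n\binom{n+\beta-1}{n}/(n\binom{n-\alpha}{n})=\sum_n\binom{n+\alpha-1}{n}/n-\sum_n\binom{n+\beta-1}{n}/n$, i.e. $-(\psi(1-\beta)-\psi(1-\alpha))$ on both sides after using \eqref{eq-not-mhs-1}-type evaluations, so it serves as a useful consistency check on all the signs. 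Once the integral identities are lined up, the rest is a direct application of Lemma \ref{lem-dual-itearted-in} and re-expansion of each resulting integral back into its series via \eqref{eq-Gen-Fun-1-x-a} and Theorem \ref{thm-Int-log-aphla-bc}.
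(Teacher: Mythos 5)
Your proposal follows essentially the same route as the paper: both express the series via the iterated integral with the forms $\frac{1}{t}\bigl(\frac{1}{(1-t)^{\gamma}}-1\bigr)dt$ at the two ends, $m$ copies of $\frac{dt}{t}$ in between, and the beta-function evaluation $\int_0^1 t^{n-1}(1-t)^{-\beta}dt=\frac{1}{n\binom{n-\beta}{n}}$, and then apply Chen's reversal identity (Lemma \ref{lem-dual-itearted-in}) so that the left side produces the two mixed and two pure series and the right side factors into the claimed products. Apart from minor slips (the description of the word fed to Chen's lemma is momentarily muddled, and the sign in your $m=-1$ digamma consistency check is off, though that case is indeed the trivially symmetric one), this matches the paper's proof.
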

\begin{proof} For $m\geq 0$ consider the iterated integral
\begin{align}\label{para-iter-int}
I_m(\alpha,\beta):=\int_0^1{\left[\frac{1}{t}\left(\frac{1}{(1-t)^{\alpha}}-1\right)dt\right]}\cdot\underbrace{\frac{dt}{t}\cdots \frac{dt}{t}}_{m}\cdot
 \left[\frac{1}{t}\cdot\left(\frac{1}{(1-t)^{\beta}}-1\right)dt\right].
\end{align}
For $m=-1$, we set
\begin{align}\label{para-iter-int-1}
I_{-1}(\alpha,\beta):=\int_0^1 \frac{1}{t}\left(\frac{1}{(1-t)^{\alpha}}-1\right)\left(\frac{1}{(1-t)^{\beta}}-1\right)dt.
\end{align}
Note that
\begin{align}\label{beta-fun-binom}
\int_0^1 \frac{t^{n-1}}{(1-t)^\beta}dt=B(n,1-\beta)=\frac{1}{n\binom{n-\beta}{n}}\quad \text{for }\Re(\beta)<1,
\end{align}
where $B(a,b)$ is the beta function defined by \eqref{equ:betaFunction}.
Hence, applying \eqref{eq-Gen-Fun-1-x-a} and \eqref{beta-fun-binom} to \eqref{para-iter-int}, we see that for $m\geq 0$,
\begin{align}\label{exp-form-int-I}
I_m(\alpha,\beta)&=\sum\limits_{n=1}^{\infty}\binom{n+\alpha-1}{n}\int_0^1{t^{n-1}dt}\cdot\underbrace{\frac{dt}{t}\cdots \frac{dt}{t}}_{m}\cdot\left[\frac{1}{t}\cdot\left(\frac{1}{(1-t)^{\beta}}-1\right)dt\right]\nonumber\\
&=\sum\limits_{n=1}^{\infty}\frac{\binom{n+\alpha-1}{n}}{n^{m+1}}\int_0^1 {t^{n-1}\left(\frac{1}{(1-t)^{\beta}}-1\right)dt}\nonumber\\
&=\sum\limits_{n=1}^{\infty}\frac{\binom{n+\alpha-1}{n}}{n^{m+2}\binom{n-\beta}{n}}-\sum\limits_{n=1}^{\infty}\frac{\binom{n+\alpha-1}{n}}{n^{m+2}}.
\end{align}
Further,  the equation
\begin{align*}
 \int_0^1{\left[\frac{1}{t}\left(\frac{1}{(1-t)^{\alpha}}-1\right)dt\right]}\cdot\underbrace{\frac{dt}{t}\cdots \frac{dt}{t}}_{m}
=\sum\limits_{n=1}^{\infty}\frac{\binom{n+\alpha-1}{n}}{n^{m+1}}
\end{align*}
and Lemma \ref{lem-dual-itearted-in} together imply \eqref{for-duality-para-apery} for $m\geq 0$ easily.
Applying \eqref{para-iter-int-1}
we can also deduce \eqref{for-duality-para-apery} for $m=-1$  by a similar argument as in the proof above.
\end{proof}

If setting $\alpha=\beta=\frac{1}{2}$ in \eqref{for-duality-para-apery}, we get the following corollary.
\begin{cor} For any integer $m\geq -1$, we have
$$\left(1+(-1)^{m}\right)\zeta(m+2)=\sum\limits_{i=1}^{m+1}(-1)^{i-1}\sum\limits_{n=1}^{\infty}
\frac{\binom{2n}{n}}{n^{i}4^{n}}
\sum\limits_{n=1}^{\infty}\frac{\binom{2n}{n}}{n^{m+2-i}4^{n}}
+\left(1+(-1)^{m}\right)\sum\limits_{n=1}^{\infty}\frac{\binom{2n}{n}}{n^{m+2}4^{n}}.$$
\end{cor}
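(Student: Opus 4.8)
The plan is to derive the corollary directly from Theorem~\ref{thm-duality-para-apery} by taking $\alpha=\beta=\tfrac12$, using the elementary identity $\binom{n-1/2}{n}=\tfrac{1}{4^n}\binom{2n}{n}$ recorded in the introduction.

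First I would note that for $\alpha=\beta=\tfrac12$ one has $\binom{n+\alpha-1}{n}=\binom{n-\tfrac12}{n}=\tfrac{1}{4^n}\binom{2n}{n}$ and, in exactly the same way, $\binom{n-\beta}{n}=\binom{n-\tfrac12}{n}=\tfrac{1}{4^n}\binom{2n}{n}$; in particular the ratios $\binom{n+\alpha-1}{n}\big/\binom{n-\beta}{n}$ and $\binom{n+\beta-1}{n}\big/\binom{n-\alpha}{n}$ are both identically $1$. Substituting these into \eqref{for-duality-para-apery}: the first two sums on the left collapse to $\sum_{n\ge1}n^{-(m+2)}=\zeta(m+2)$ and $(-1)^m\zeta(m+2)$; the third and fourth sums combine into $-\bigl(1+(-1)^m\bigr)\sum_{n\ge1}\binom{2n}{n}\big/(n^{m+2}4^n)$; and on the right each series $\sum_{n\ge1}\binom{n+\beta-1}{n}/n^i$ becomes $\sum_{n\ge1}\binom{2n}{n}/(n^{i}4^n)$ and $\sum_{n\ge1}\binom{n+\alpha-1}{n}/n^{m+2-i}$ becomes $\sum_{n\ge1}\binom{2n}{n}/(n^{m+2-i}4^n)$. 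Since $\Re(\alpha)=\Re(\beta)=\tfrac12<1$, Theorem~\ref{thm-duality-para-apery} applies and every series in sight converges.

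After these substitutions, \eqref{for-duality-para-apery} reads
\[
\bigl(1+(-1)^m\bigr)\zeta(m+2)-\bigl(1+(-1)^m\bigr)\sum_{n=1}^{\infty}\frac{\binom{2n}{n}}{n^{m+2}4^n}
=\sum_{i=1}^{m+1}(-1)^{i-1}\sum_{n=1}^{\infty}\frac{\binom{2n}{n}}{n^{i}4^n}\sum_{n=1}^{\infty}\frac{\binom{2n}{n}}{n^{m+2-i}4^n},
\]
and transposing the second term on the left-hand side to the right-hand side gives exactly the asserted identity. There is no genuine obstacle here: the whole argument is a routine specialization, the only things to keep an eye on being the bookkeeping of signs and exponents and the harmless remark that for odd $m$ the factor $1+(-1)^m$ vanishes, in which case the corollary reduces to the elementary observation that the double sum on the right pairs off into mutually cancelling terms under $i\leftrightarrow m+2-i$.
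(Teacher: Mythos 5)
Your proposal is correct and is exactly the paper's argument: the paper derives this corollary by setting $\alpha=\beta=\tfrac12$ in Theorem \ref{thm-duality-para-apery} and using $\binom{n-1/2}{n}=\binom{2n}{n}/4^n$, just as you do. Your closing remark about the odd-$m$ case (cancellation under $i\leftrightarrow m+2-i$) is a harmless and accurate addition.
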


\begin{re} In \cite[Thm. 3.4]{WX2021}, Wang and Xu proved that for $m\in \N$,
\[\sum_{n=1}^\infty \frac{\binom{2n}{n}}{n^m 4^n}\in \Q[\log(2),\ze(2),\ze(3),\ze(4),\ldots].\]
\end{re}

\begin{thm}\label{thm-pmhs-duality-para-apery}
For any integers $m\geq -1,k,p\geq 0$ and complex numbers $\alpha,\beta\in \mathbb{C}\backslash \N_0^-$ with $\Re(\alpha),\Re(\beta)<1$, we have
\begin{align}
&\sum\limits_{n=1}^{\infty}\frac{\zeta_{n}(\{1\}_{k};\alpha)\zeta_{n}^{\star}(\{1\}_{p};1-\beta)}{n^{m+2}}
\frac{\binom{n+\alpha-1}{n}}{\binom{n-\beta}{n}}\nonumber\\
&+(-1)^{m}\sum\limits_{n=1}^{\infty}\frac{\zeta_{n}(\{1\}_{p};\beta)\zeta_{n}^{\star}(\{1\}_{k};1-\alpha)}{n^{m+2}}
\frac{\binom{n+\beta-1}{n}}{\binom{n-\alpha}{n}}\nonumber\\
&-\delta_{0,p}\sum\limits_{n=1}^{\infty}\frac{\zeta_{n}(\{1\}_{k};\alpha)}{n^{m+2}}
\binom{n+\alpha-1}{n}-(-1)^{m}\delta_{0,k}\sum\limits_{n=1}^{\infty}\frac{\zeta_{n}(\{1\}_{p};\beta)}{n^{m+2}}
\binom{n+\beta-1}{n}\nonumber\\
&=\sum\limits_{i=1}^{m+1}(-1)^{i-1}\left\{\sum\limits_{n=1}^{\infty}
\frac{\zeta_{n}(\{1\}_{p};\beta)}{n^{i}}
\binom{n+\beta-1}{n}\right\}
\left\{\sum\limits_{n=1}^{\infty}
\frac{\zeta_{n}(\{1\}_{k};\alpha)}{n^{m+2-i}}
\binom{n+\alpha-1}{n}\right\}.
\end{align}
\end{thm}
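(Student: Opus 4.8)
The plan is to run, one level up, the argument behind Theorem~\ref{thm-duality-para-apery}, encoding the extra harmonic data $\ze_n(\{1\}_k;\alpha)$ and $\ze_n^\star(\{1\}_p;1-\beta)$ inside the iterated integral via the generating function \eqref{eq-More-Gen-Fun-1-x-a} and the log-moment formula \eqref{eq-Int-log-aphla-bc}. First I would set
\begin{align*}
\phi_k^{(\alpha)}(t):=\frac1t\left(\frac{(-1)^k}{k!}\frac{\log^k(1-t)}{(1-t)^\alpha}-\delta_{0,k}\right)=\sum_{n=1}^\infty \binom{n+\alpha-1}{n}\ze_n(\{1\}_k;\alpha)\,t^{n-1},
\end{align*}
the power-series expansion being \eqref{eq-More-Gen-Fun-1-x-a} (the term $\delta_{0,k}$ removing the constant contribution when $k=0$, via \eqref{eq-Gen-Fun-1-x-a}). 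Then, for $m\ge 0$, I would introduce
\begin{align*}
I_{m,k,p}(\alpha,\beta):=\int_0^1 \phi_k^{(\alpha)}(t)\,dt\,\underbrace{\frac{dt}{t}\cdots\frac{dt}{t}}_{m}\,\phi_p^{(\beta)}(t)\,dt,
\end{align*}
and for $m=-1$ the integral $I_{-1,k,p}(\alpha,\beta):=\int_0^1 t\,\phi_k^{(\alpha)}(t)\,\phi_p^{(\beta)}(t)\,dt$, which is the evident analogue of the $I_{-1}(\alpha,\beta)$ used in the proof of Theorem~\ref{thm-duality-para-apery}. The hypotheses $\Re(\alpha),\Re(\beta)<1$ make all the integrals and series that appear convergent, as in the remark following \eqref{Para-Apery-MHS-III} (the case $m=-1$ being treated on its region of convergence, just as in Theorem~\ref{thm-duality-para-apery}).

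The first step is to evaluate $I_{m,k,p}(\alpha,\beta)$ directly: expand the outermost (small-variable) form $\phi_k^{(\alpha)}$ in its series, integrate term by term against the $m$ intermediate forms $dt/t$ (each producing a factor $1/n$), and then integrate $t^{n-1}$ against the last form $\phi_p^{(\beta)}$, using \eqref{eq-Int-log-aphla-bc} with $(\alpha,k)$ replaced by $(\beta,p)$ together with the beta integral \eqref{beta-fun-binom}. This yields, uniformly for $m\ge -1$,
\begin{align*}
I_{m,k,p}(\alpha,\beta)=\sum_{n=1}^\infty \frac{\ze_n(\{1\}_k;\alpha)\ze_n^\star(\{1\}_p;1-\beta)}{n^{m+2}}\frac{\binom{n+\alpha-1}{n}}{\binom{n-\beta}{n}}-\delta_{0,p}\sum_{n=1}^\infty \frac{\ze_n(\{1\}_k;\alpha)}{n^{m+2}}\binom{n+\alpha-1}{n},
\end{align*}
and, exchanging $(\alpha,k)\leftrightarrow(\beta,p)$, the companion formula for $I_{m,p,k}(\beta,\alpha)$. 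Thus the whole left-hand side of the theorem is precisely $I_{m,k,p}(\alpha,\beta)+(-1)^m I_{m,p,k}(\beta,\alpha)$. When $m=-1$ this equals $I_{-1,k,p}(\alpha,\beta)-I_{-1,p,k}(\beta,\alpha)$, which vanishes since the two integrands are literally the same function; as the sum $\sum_{i=1}^{0}$ on the right is empty in that case, $m=-1$ is settled.

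For $m\ge 0$ I would apply Lemma~\ref{lem-dual-itearted-in} (Chen's path-reversal identity) with the $M:=m+2$ one-forms $f_M:=\phi_k^{(\alpha)}$, $f_{M-1}=\cdots=f_2:=1/t$, $f_1:=\phi_p^{(\beta)}$, so that $g(f_1,\dots,f_M)=I_{m,k,p}(\alpha,\beta)$, $g(f_M,\dots,f_1)=I_{m,p,k}(\beta,\alpha)$ and $(-1)^M=(-1)^m$; the lemma then yields
\begin{align*}
I_{m,k,p}(\alpha,\beta)+(-1)^m I_{m,p,k}(\beta,\alpha)=\sum_{i=1}^{m+1}(-1)^{i-1}\,g(f_i,\dots,f_1)\,g(f_{i+1},\dots,f_M).
\end{align*}
It then remains only to recognise the two ``half'' iterated integrals; integrating the relevant power series term by term gives, for $1\le i\le m+1$,
\begin{align*}
g(f_i,\dots,f_1)&=\int_0^1 \phi_p^{(\beta)}(t)\,dt\underbrace{\frac{dt}{t}\cdots\frac{dt}{t}}_{i-1}=\sum_{n=1}^\infty \frac{\binom{n+\beta-1}{n}\ze_n(\{1\}_p;\beta)}{n^{i}},\\
g(f_{i+1},\dots,f_M)&=\int_0^1 \phi_k^{(\alpha)}(t)\,dt\underbrace{\frac{dt}{t}\cdots\frac{dt}{t}}_{m+1-i}=\sum_{n=1}^\infty \frac{\binom{n+\alpha-1}{n}\ze_n(\{1\}_k;\alpha)}{n^{m+2-i}}.
\end{align*}
Substituting these into the previous display reproduces exactly the right-hand side of the theorem, which finishes the proof.

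The work here is bookkeeping rather than a genuinely new obstacle: one has to pin down the orientation of the iterated integrals so that $\phi_k^{(\alpha)}$ and $\phi_p^{(\beta)}$ occupy the correct (small-variable) ends, hence play the roles of $f_M$ and $f_1$ in Lemma~\ref{lem-dual-itearted-in}; and one has to treat $k=0$ and $p=0$ uniformly, where the subtraction of $\delta_{0,k}$ (resp.\ $\delta_{0,p}$) is simultaneously what makes the form integrable at $t=0$ and what generates the two $\delta$-terms on the left-hand side. One should also double-check convergence of the boundary ``half'' series occurring at $i=1$ and $i=m+1$, e.g.\ $\sum_{n\ge1}\binom{n+\beta-1}{n}\ze_n(\{1\}_p;\beta)/n$: this follows from $\binom{n+\beta-1}{n}=O(n^{\Re(\beta)-1})$, $\ze_n(\{1\}_p;\beta)=O((\log n)^p)$ and $\Re(\beta)<1$, i.e.\ the same estimates already used in the remark after \eqref{Para-Apery-MHS-III}.
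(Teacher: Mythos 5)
Your proof is correct, but it is not the route the paper takes: the paper's entire proof of this theorem is the single observation that one may differentiate the already-established identity \eqref{for-duality-para-apery} of Theorem \ref{thm-duality-para-apery} $k$ times in $\alpha$ and $p$ times in $\beta$, using \eqref{eq-diff-pcb} and \eqref{diff-para-binom-mhss} to identify the resulting derivatives (the $\delta_{0,k}$ and $\delta_{0,p}$ terms arise because two of the series in \eqref{for-duality-para-apery} depend on only one of the two parameters, and the products on the right factor as a $\beta$-only series times an $\alpha$-only series, so no Leibniz cross terms appear). You instead rerun the Chen reversal argument directly with the ``fattened'' one-forms $\phi_k^{(\alpha)}$ and $\phi_p^{(\beta)}$; since $\phi_k^{(\alpha)}=\frac{1}{k!}\partial_\alpha^k\,\phi_0^{(\alpha)}$ by \eqref{eq-diff-pcb}, your argument amounts to performing the paper's differentiation inside the iterated integral rather than on the final identity. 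Both are sound. The paper's version is shorter given that Theorem \ref{thm-duality-para-apery} is in hand; yours is self-contained modulo \eqref{eq-More-Gen-Fun-1-x-a} and \eqref{eq-Int-log-aphla-bc}, handles all $k,p\ge 0$ uniformly without invoking term-by-term differentiation of an infinite series, and makes transparent where the $\delta$-terms come from (the constant subtraction needed to make the forms integrable at $t=0$). Your orientation bookkeeping in Lemma \ref{lem-dual-itearted-in} is consistent with the paper's convention (the last-listed form sits at the smallest variable), the $m=-1$ case correctly degenerates to $0=0$, and your convergence remarks match those the paper itself relies on, so I see no gap.
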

\begin{proof}
Differentiating \eqref{for-duality-para-apery} $k$ times with respect to $\alpha$ and $p$ times with respect to $\beta$ yields the desired result with the help of \eqref{diff-para-binom-mhss} and \eqref{eq-diff-pcb}.
\end{proof}

Setting $(\alpha,\beta)\rightarrow (0,0)$ and $(0,1/2)$, respectively, yields the following two corollaries.
\begin{cor} For any integers $m\geq -1$ and $k,p\geq 1$, we have
\begin{align}
&\sum\limits_{n=1}^{\infty}\frac{\zeta_{n-1}(\{1\}_{k-1})\zeta_{n}^{\star}(\{1\}_{p})}{n^{m+3}}
+(-1)^{m}\sum\limits_{n=1}^{\infty}\frac{\zeta_{n-1}(\{1\}_{p-1})\zeta_{n}^{\star}(\{1\}_{k})}{n^{m+3}}\nonumber\\
&\quad=\sum\limits_{i=1}^{m+1}(-1)^{i-1}\zeta(i+1;\{1\}_{p-1})\zeta(m+3-i;\{1\}_{k-1}).
\end{align}
\end{cor}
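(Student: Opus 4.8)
The plan is to obtain this corollary as the limit $(\alpha,\beta)\to(0,0)$ of Theorem~\ref{thm-pmhs-duality-para-apery}. Since we now assume $k,p\ge 1$, the two Kronecker-delta terms $\delta_{0,p}$ and $\delta_{0,k}$ on the left-hand side of that theorem vanish identically, so only the two Ap\'ery-type series on the left and the single convolution sum on the right survive, and it remains to identify their limits. Note that $\alpha=\beta=0$ is \emph{not} itself covered by Theorem~\ref{thm-pmhs-duality-para-apery} (which requires $\alpha,\beta\notin\N_0^-$), so a genuine limiting argument is needed rather than a direct substitution.

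The pointwise limits are immediate from formulas already established. By \eqref{Limit-mhns-para}, $\binom{n+\alpha-1}{n}\zeta_n(\{1\}_k;\alpha)\to\zeta_{n-1}(\{1\}_{k-1})/n$ as $\alpha\to0$, and $\binom{n+\beta-1}{n}\zeta_n(\{1\}_p;\beta)\to\zeta_{n-1}(\{1\}_{p-1})/n$ as $\beta\to0$; the remaining factors are continuous at the relevant point, since $\zeta^\star_n(\{1\}_p;1-\beta)\to\zeta^\star_n(\{1\}_p)$ and $1/\binom{n-\beta}{n}\to 1$ as $\beta\to0$ (and symmetrically in $\alpha$). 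Hence the $n$-th summand of the first left-hand series tends to $\zeta_{n-1}(\{1\}_{k-1})\zeta^\star_n(\{1\}_p)/n^{m+3}$ and that of the second to $\zeta_{n-1}(\{1\}_{p-1})\zeta^\star_n(\{1\}_k)/n^{m+3}$, while on the right the $i$-th bracket $\sum_{n\ge1}\binom{n+\beta-1}{n}\zeta_n(\{1\}_p;\beta)/n^{i}$ tends to $\sum_{n\ge1}\zeta_{n-1}(\{1\}_{p-1})/n^{i+1}=\zeta(i+1;\{1\}_{p-1})$ (an admissible multiple zeta value, since $i\ge1$), and the companion bracket to $\zeta(m+3-i;\{1\}_{k-1})$ (here $m+3-i\ge2$ on the range $1\le i\le m+1$). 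Assembling these pieces reproduces the asserted identity; for $m=-1$ the right-hand sum is empty, so the corollary reduces to the equality of the two left-hand series, which is covered by the same argument.

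The only step requiring genuine care — and the main obstacle — is justifying the interchange of the limits $\alpha,\beta\to0$ with the infinite summations, which I would settle by dominated convergence. Here I would use \eqref{eq-diff-pcb}, which rewrites $\binom{n+\alpha-1}{n}\zeta_n(\{1\}_k;\alpha)=\frac1{k!}\,\partial_\alpha^k\binom{n+\alpha-1}{n}$, together with the gamma-ratio asymptotics recorded in the Remark of the Introduction: since $\binom{n+\alpha-1}{n}=\Gamma(n+\alpha)/(\Gamma(\alpha)\Gamma(n+1))=O(n^{\Re\alpha-1})$ locally uniformly in $\alpha$, a Cauchy estimate on a small disc about $0$ bounds $\binom{n+\alpha-1}{n}\zeta_n(\{1\}_k;\alpha)$ by $C\,n^{-1+\eta}$ uniformly for $\alpha$ near $0$; likewise $1/\binom{n-\beta}{n}=O(n^{\Re\beta})=O(n^{\eta})$ and $\zeta^\star_n(\{1\}_p;1-\beta)=O((\log n)^p)$ uniformly for $\beta$ near $0$. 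The resulting majorant for the general summand of every series in play has the shape $C(\log n)^{A}\,n^{-1-\eta'}$ with $\eta'>0$ (using that all exponents of $n$ that occur — namely $m+3$, $i+1$, and $m+3-i$ — are at least $2$, which leaves room to absorb the two $O(n^{\eta})$ factors), hence is summable, and dominated convergence applies throughout. The remaining manipulations (e.g.\ absorbing the extra factor $1/n$ into $n^{m+2}$ to produce $n^{m+3}$) are routine bookkeeping.
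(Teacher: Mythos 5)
Your proposal is correct and follows exactly the paper's route: the corollary is obtained by letting $(\alpha,\beta)\to(0,0)$ in Theorem \ref{thm-pmhs-duality-para-apery}, using \eqref{Limit-mhns-para} to identify the limits of the summands (the paper states only ``Setting $(\alpha,\beta)\rightarrow(0,0)$\dots yields the following two corollaries''). Your additional dominated-convergence justification for interchanging the limit with the infinite sums is a sound piece of care that the paper leaves implicit.
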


\begin{cor} For any integers $m\geq -1,k\geq 1$ and $p\geq 0$, we have
\begin{align}
&\sum\limits_{n=1}^{\infty}\frac{\zeta_{n-1}(\{1\}_{k-1})t_{n}^{\star}(\{1\}_{p})}{n^{m+3}}
\frac{4^{n}}{\binom{2n}{n}}
+(-1)^{m}\sum\limits_{n=1}^{\infty}\frac{t_{n}(\{1\}_{p})\zeta_{n}^{\star}(\{1\}_{k})}{n^{m+2}}
\frac{\binom{2n}{n}}{4^{n}}
-\delta_{0,p}\zeta(m+3,\{1\}_{k-1})\nonumber\\
&\quad=\sum\limits_{i=1}^{m+1}(-1)^{i-1}
\left\{\sum\limits_{n=1}^{\infty}
\frac{t_{n}(\{1\}_{p})}{n^{i}}\frac{\binom{2n}{n}}{4^{n}}\right\}
\zeta(m+3-i,\{1\}_{k-1}).
\end{align}
\end{cor}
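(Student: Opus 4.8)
The plan is to obtain this corollary from Theorem~\ref{thm-pmhs-duality-para-apery} by first specializing $\beta=1/2$ and then letting $\alpha\to0$. Setting all the second parameters equal to $1/2$ in \eqref{HTMHSs-r}--\eqref{THMHSSs-r} (and in the definitions of $t_n,t_n^\star$) gives $\zeta_n(\{1\}_p;1/2)=2^p t_n(\{1\}_p)$ and $\zeta_n^\star(\{1\}_p;1-1/2)=2^p t_n^\star(\{1\}_p)$, while $\binom{n+1/2-1}{n}=\binom{n-1/2}{n}=\binom{2n}{n}/4^n$. Substituting these into the identity of Theorem~\ref{thm-pmhs-duality-para-apery} turns every $\beta$-dependent factor into a central-binomial/$t$-sum expression, and an overall factor $2^p$ appears in the first term on the left, the second term on the left, and the right-hand side; the fourth term on the left drops out because $k\ge1$ forces $\delta_{0,k}=0$, and the third term is proportional to $\delta_{0,p}$, which satisfies $\delta_{0,p}=2^p\delta_{0,p}$, so it too carries a factor $2^p$. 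Dividing the specialized identity through by $2^p$ therefore produces an identity in the single parameter $\alpha$.

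Next I would pass to the limit $\alpha\to0$ in this identity. The crucial input is \eqref{Limit-mhns-para}, which gives $\binom{n+\alpha-1}{n}\zeta_n(\{1\}_k;\alpha)\to\zeta_{n-1}(\{1\}_{k-1})/n$ for $k,n\ge1$; together with $\binom{n-\alpha}{n}\to1$ and the continuity of $\alpha\mapsto\zeta_n^\star(\{1\}_k;1-\alpha)$ at $\alpha=0$ (where it equals $\zeta_n^\star(\{1\}_k)$), one obtains, term by term: the first left-hand sum tends to $\sum_{n\ge1}\zeta_{n-1}(\{1\}_{k-1})t_n^\star(\{1\}_p)\,4^n/\bigl(n^{m+3}\binom{2n}{n}\bigr)$; the second tends to $(-1)^m\sum_{n\ge1}t_n(\{1\}_p)\zeta_n^\star(\{1\}_k)\binom{2n}{n}/\bigl(n^{m+2}4^n\bigr)$; the third tends to $-\delta_{0,p}\sum_{n\ge1}\zeta_{n-1}(\{1\}_{k-1})/n^{m+3}=-\delta_{0,p}\,\zeta(m+3,\{1\}_{k-1})$. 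On the right, the factor $\sum_{n\ge1}\zeta_n(\{1\}_k;\alpha)\binom{n+\alpha-1}{n}/n^{m+2-i}$ tends to $\sum_{n\ge1}\zeta_{n-1}(\{1\}_{k-1})/n^{m+3-i}=\zeta(m+3-i,\{1\}_{k-1})$ by the same identity, while $\sum_{n\ge1}\zeta_n(\{1\}_p;\beta)\binom{n+\beta-1}{n}/n^i$ has already become $\sum_{n\ge1}t_n(\{1\}_p)\binom{2n}{n}/(n^i4^n)$. Assembling these limits reproduces the asserted formula.

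The step that genuinely requires care---and the main obstacle---is the interchange of the limit $\alpha\to0$ with the infinite summations. For this I would work on a punctured neighbourhood $0<|\alpha|<1/2$ and show that each of the series involved is a locally uniform limit of its partial sums there, hence a continuous function of $\alpha$ near $0$. Using the estimate $\binom{n+\alpha-1}{n}=\Gamma(\alpha)^{-1}n^{\alpha-1}\bigl(1+O(1/n)\bigr)$ (cf.\ the Remark following \eqref{Para-Apery-MHS-III}), the crude uniform bounds $\alpha\,\zeta_n(\{1\}_k;\alpha)=O((\log n)^k)$, $\zeta_n^\star(\{1\}_k;1-\alpha)=O((\log n)^k)$ and $t_n(\{1\}_p),t_n^\star(\{1\}_p)=O((\log n)^p)$, and the fact that $\binom{2n}{n}/4^n$ is of order $n^{-1/2}$, one checks that every summand of every series in sight is $O(n^{-1-\varepsilon})$ uniformly in $\alpha$ for some $\varepsilon>0$; this is precisely where the hypothesis $m\ge-1$ enters. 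A dominated-convergence argument then legitimizes the termwise passage to the limit, and the bookkeeping described above finishes the proof.
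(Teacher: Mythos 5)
Your proposal is correct and is essentially the paper's own argument: the paper obtains this corollary precisely by letting $(\alpha,\beta)\rightarrow(0,1/2)$ in Theorem \ref{thm-pmhs-duality-para-apery}, using $\binom{n-1/2}{n}=\binom{2n}{n}/4^n$, $\zeta_n(\{1\}_p;1/2)=2^pt_n(\{1\}_p)$, $\zeta_n^\star(\{1\}_p;1/2)=2^pt_n^\star(\{1\}_p)$ and \eqref{Limit-mhns-para}, exactly as you do. The only difference is that you also supply the uniform estimates justifying the termwise passage to the limit $\alpha\to0$, a point the paper leaves implicit.
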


\begin{thm} For any integer $m\geq -1$ and complex numbers $\alpha\in \mathbb{C}\backslash \N_0^-,\beta\in \mathbb{C}$ with $\Re(\beta)<1$, we have
\begin{align}\label{exp-formul-t-para-binom}
\sum\limits_{n=1}^{\infty}\frac{\binom{n+\alpha-1}{n}}{n^{m+2}\binom{n-\beta}{n}}
=\alpha\sum\limits_{n=1}^{\infty}\frac{\zeta_{n-1}(\{1\}_{m+1};1-\beta)}{(n-\beta-\alpha)(n-\beta)}.
\end{align}
\end{thm}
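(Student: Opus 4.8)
Write $S$ for the left-hand side of \eqref{exp-formul-t-para-binom}. The plan is to use the iterated-integral method already employed for Theorem~\ref{thm-duality-para-apery}: first realize $S$ as an iterated integral, then transform it by $t\mapsto 1-t$, and finally read off the claimed series. \textbf{Step 1.} By \eqref{beta-fun-binom}, $\frac{1}{n\binom{n-\beta}{n}}=\int_0^1 t^{n-1}(1-t)^{-\beta}\,dt$, and by \eqref{eq-Gen-Fun-1-x-a}, $\frac{1}{t}\bigl(\frac{1}{(1-t)^{\alpha}}-1\bigr)=\sum_{n\ge 1}\binom{n+\alpha-1}{n}t^{n-1}$; inserting $m$ copies of $\frac{dt}{t}$ to supply the remaining factor $n^{-m}$ (exactly as in the derivation of \eqref{exp-form-int-I}) gives
\[
S=\int_0^1\Bigl[\frac{1}{t}\Bigl(\frac{1}{(1-t)^{\alpha}}-1\Bigr)dt\Bigr]\,\underbrace{\frac{dt}{t}\cdots\frac{dt}{t}}_{m}\,\frac{(1-t)^{-\beta}}{t}\,dt,
\]
with the obvious one-factor reading when $m=-1$.

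\textbf{Step 2.} The substitution $t\mapsto 1-t$ converts this into $S=\int_0^1\frac{t^{-\alpha}-1}{1-t}\,G_m(t)\,dt$, where
\[
G_m(t):=\int_0^t\Bigl[\frac{s^{-\beta}\,ds}{1-s}\Bigr]\,\underbrace{\frac{ds}{1-s}\cdots\frac{ds}{1-s}}_{m}\qquad(G_{-1}(t):=t^{-\beta}).
\]
A routine induction on $m$ — the key step being $\frac{1}{1-s}\sum_{n\ge 1}\frac{s^{n-\beta}}{n-\beta}\zeta_{n-1}(\{1\}_j;1-\beta)=\sum_{N\ge 1}s^{N-\beta}\zeta_N(\{1\}_{j+1};1-\beta)$, which rests on the elementary identity $\sum_{n=1}^{N}\frac{\zeta_{n-1}(\{1\}_j;1-\beta)}{n-\beta}=\zeta_N(\{1\}_{j+1};1-\beta)$ (peel off the largest summation index) — yields the explicit form $G_m(t)=\sum_{n\ge 1}\frac{t^{n-\beta}}{n-\beta}\,\zeta_{n-1}(\{1\}_m;1-\beta)$.

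\textbf{Step 3 (the crux).} Write $\frac{t^{-\alpha}-1}{1-t}=\frac{\alpha}{1-t}\int_t^1 u^{-\alpha-1}\,du$, interchange the two integrations over $\{0<t<u<1\}$, and note that $\int_0^u\frac{G_m(t)}{1-t}\,dt=G_{m+1}(u)$ (one further application of $\frac{dt}{1-t}$); this rewrites $S$ as $\alpha\int_0^1 u^{-\alpha-1}G_{m+1}(u)\,du$, and integrating the series for $G_{m+1}$ term by term with $\int_0^1 u^{n-\beta-\alpha-1}\,du=\frac{1}{n-\beta-\alpha}$ produces exactly $\alpha\sum_{n\ge 1}\frac{\zeta_{n-1}(\{1\}_{m+1};1-\beta)}{(n-\beta-\alpha)(n-\beta)}$, i.e.\ \eqref{exp-formul-t-para-binom}. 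I expect this last step to be the real obstacle: one must spot that $\frac{t^{-\alpha}-1}{1-t}$ should be resolved as $\frac{\alpha}{1-t}\int_t^1 u^{-\alpha-1}\,du$, because the naive expansion of the outermost $1$-form only yields a difference of digamma values, whereas the order-swap is precisely what promotes $G_m$ to $G_{m+1}$ and produces the factor $\frac{1}{n-\beta-\alpha}$. A minor point is convergence: the term-by-term integration requires $\Re(n-\beta-\alpha)>0$ for all contributing $n$, i.e.\ $\Re(\alpha+\beta)<m+2$ — exactly the range in which $S$ converges (cf.\ the opening Remark; for $m\ge 0$ this follows from $\Re(\alpha),\Re(\beta)<1$) — so one proves the identity there and, both sides being holomorphic in $\alpha$, extends it by analytic continuation; the boundary case $m=-1$ runs through the same three steps with $G_{-1}(t)=t^{-\beta}$ in place of an iterated integral.
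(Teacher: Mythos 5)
Your proof is correct and follows essentially the same route as the paper's: realize the left-hand side as the iterated integral \eqref{para-iter-int}, reflect by $t\mapsto 1-t$, and expand into Hurwitz-type multiple series. The differences are cosmetic — you leave the $\beta$-end unsubtracted, so you never need \eqref{eq-not-mhs-1} to cancel the two correction series, and the step you flag as the crux (resolving $t^{-\alpha}-1$ as $\alpha\int_t^1u^{-\alpha-1}\,du$ and swapping the order of integration) is handled in the paper by simply expanding the outermost form $\frac{dt}{(1-t)t^{\alpha}}$ as a power series, which yields the factor $\frac{1}{n-\beta-\alpha}$ directly; also note your stated condition $\Re(\alpha+\beta)<m+2$ for the term-by-term integration should read $\Re(\alpha+\beta)<1$, though your analytic-continuation remark covers the remaining range in any case.
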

\begin{proof} We only prove that \eqref{exp-formul-t-para-binom} holds for $m\geq 0$. Applying $t\rightarrow 1-t$ in \eqref{para-iter-int},
\begin{align*}
&I_m(\alpha,\beta)=\int_0^1{\left[\frac{1}{1-t}\left(\frac{1}{t^{\beta}}-1\right)dt\right]}
\cdot\underbrace{\frac{dt}{1-t}\cdots \frac{dt}{1-t}}_{m}\cdot
 \left[\frac{1}{1-t}\cdot\left(\frac{1}{t^{\alpha}}-1\right)dt\right]\\
&=\lim_{x \to 1}\int_0^x{\left[\frac{1}{1-t}\left(\frac{1}{t^{\beta}}-1\right)dt\right]}
\cdot\underbrace{\frac{dt}{1-t}\cdots \frac{dt}{1-t}}_{m}\cdot
 \left[\frac{1}{1-t}\cdot\left(\frac{1}{t^{\alpha}}-1\right)dt\right]\\
&=\lim_{x \to 1}\left\{ \begin{array}{l}
\int_0^x{\frac{dt}{(1-t)t^{\beta}}}\cdot\underbrace{\frac{dt}{1-t}\cdots \frac{dt}{1-t}}_{m}\cdot
\frac{dt}{(1-t)t^{\alpha}}-\int_0^x{\frac{dt}{(1-t)t^{\beta}}}\cdot\underbrace{\frac{dt}{1-t}\cdots\frac{dt}{1-t}}_{m}\cdot\frac{dt}{1-t}\\
-\int_0^x{\frac{dt}{1-t}}\underbrace{\frac{dt}{1-t}\cdots\frac{dt}{1-t}}_{m}\cdot\frac{dt}{(1-t)t^{\alpha}}
+\int_0^x{\frac{dt}{1-t}}\underbrace{\frac{dt}{1-t}\cdots\frac{dt}{1-t}}_{m}\cdot\frac{dt}{1-t}
 \end{array} \right\}\\
&=\lim_{x \to 1}\left\{ \begin{array}{l}
\sum\limits_{n_{1}>\cdots>n_{m+2}>0}\frac{x^{n_{1}-\beta-\alpha}}{(n_{1}-\beta-\alpha)\prod_{j=2}^{m+2}(n_{j}-\beta)}-\sum\limits_{n_{1}>\cdots>n_{m+2}>0}\frac{x^{n_{1}-\beta}}{\prod_{j=1}^{m+2}
(n_{j}-\beta)}\\
-\sum\limits_{n_{1}>\cdots>n_{m+2}>0}\frac{x^{n_{1}-\alpha}}{(n_{1}-\alpha)n_2\cdots n_{m+2}}
+\sum\limits_{n_{1}>\cdots>n_{m+2}>0}\frac{x^{n_{1}}}{n_{1}\cdots n_{m+2}}
\end{array} \right\}\\
&=\sum\limits_{n=1}^{\infty}\zeta_{n-1}(\{1\}_{m+1};1-\beta)\left(\frac{1}{n-\beta-\alpha}-\frac{1}{n-\beta}\right)
-\sum\limits_{n=1}^{\infty}\zeta_{n-1}(\{1\}_{m+1})\left(\frac{1}{n-\alpha}-\frac{1}{n}\right)\\
&=\alpha\sum\limits_{n=1}^{\infty}\frac{\zeta_{n-1}(\{1\}_{m+1};1-\beta)}{(n-\beta-\alpha)(n-\beta)}
-\alpha\sum\limits_{n=1}^{\infty}\frac{\zeta_{n-1}(\{1\}_{m+1})}{n(n-\alpha)}.
\end{align*}
Hence, applying \eqref{exp-form-int-I} yields
\begin{align*}
\sum\limits_{n=1}^{\infty}\frac{\binom{n+\alpha-1}{n}}{n^{m+2}\binom{n-\beta}{n}}
-\sum\limits_{n=1}^{\infty}\frac{\binom{n+\alpha-1}{n}}{n^{m+2}}
=\alpha\sum\limits_{n=1}^{\infty}\frac{\zeta_{n-1}(\{1\}_{m+1};1-\beta)}{(n-\beta-\alpha)(n-\beta)}
-\alpha\sum\limits_{n=1}^{\infty}\frac{\zeta_{n-1}(\{1\}_{m+1})}{n(n-\alpha)}.
\end{align*}
Further, \eqref{eq-not-mhs-1} gives rise to the formula
\[\sum\limits_{n=1}^{\infty}\frac{\binom{n+\alpha-1}{n}}{n^{m+1}}
=\alpha\sum\limits_{n=1}^{\infty}\frac{\zeta_{n-1}(\{1\}_{m})}{n(n-\alpha)}\,,
\]
which leads to \eqref{exp-formul-t-para-binom} immediately.
\end{proof}

\begin{thm}\label{thm-t-para-binom-t-mhss}
For any integers $m\geq -1,k,p\geq 0$ and complex numbers $\alpha\in \mathbb{C}\backslash \N_0^-,\beta\in \mathbb{C}$ with $\Re(\beta)<1$, we have
\begin{align}\label{for-t-para-binom-t-mhss}
&\sum\limits_{n=1}^{\infty}\frac{\zeta_{n}(\{1\}_{k};\alpha)\zeta_{n}^{\star}(\{1\}_{p};1-\beta)}{n^{m+2}}
\frac{\binom{n+\alpha-1}{n}}{\binom{n-\beta}{n}}\nonumber\\
&=\sum\limits_{i_{1}+\cdots+i_{m+2}=p\atop i_{1}, \ldots ,i_{m+2}\geq0}
\binom{i_{1}+k}{k}\left\{\zeta(i_1+k+1,i_2+1,\ldots,i_{m+2}+1;1-\alpha-\beta,\{1-\beta\}_{m+1})\atop -\delta_{0,k}\zeta(i_1+k+1,i_2+1,\ldots,i_{m+2}+1;\{1-\beta\}_{m+2}) \right\},
\end{align}
where if $i_1=k=0$ then
\begin{align}
&\zeta(1,i_2+1,\ldots,i_{m+2}+1;1-\alpha-\beta,\{1-\beta\}_{m+1})-\zeta(1,i_2+1,\ldots,i_{m+2}+1;\{1-\beta\}_{m+2})\nonumber\\
&=\alpha\sum_{n=1}^\infty \frac{\zeta_{n-1}(i_{2}+1,\ldots,i_{m+2}+1;1-\beta)}{(n-\beta)(n-\alpha-\beta)}.
\end{align}
\end{thm}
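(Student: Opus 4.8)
The strategy is to differentiate the iterated-integral identity for $I_m(\alpha,\beta)$ from \eqref{para-iter-int} (or \eqref{para-iter-int-1} if $m=-1$) a total of $k$ times in $\alpha$ and $p$ times in $\beta$, comparing the outcome computed from the ``series side'' \eqref{exp-form-int-I} against the one computed from the ``integral side'' produced by the change of variable $t\mapsto1-t$ in the proof of the preceding theorem. On the series side, \eqref{eq-diff-pcb} and \eqref{diff-para-binom-mhss} yield
\[
\frac{\partial^{k+p}}{\partial\alpha^k\partial\beta^p}I_m(\alpha,\beta)=k!\,p!\sum_{n\geq1}\frac{\zeta_n(\{1\}_k;\alpha)\zeta_n^\star(\{1\}_p;1-\beta)}{n^{m+2}}\frac{\binom{n+\alpha-1}{n}}{\binom{n-\beta}{n}}-\delta_{0,p}\,k!\sum_{n\geq1}\frac{\zeta_n(\{1\}_k;\alpha)}{n^{m+2}}\binom{n+\alpha-1}{n},
\]
so everything reduces to differentiating the four-term integral $I_m=T_1-T_2-T_3+T_4$, where, via the iterated-integral representation of HTMZVs, $T_1=\zeta(\{1\}_{m+2};1-\alpha-\beta,\{1-\beta\}_{m+1})$, $T_2=\zeta(\{1\}_{m+2};\{1-\beta\}_{m+2})$, $T_3=\sum_{n_1>\cdots>n_{m+2}>0}\big[(n_1-\alpha)n_2\cdots n_{m+2}\big]^{-1}$, and $T_4=\zeta(\{1\}_{m+2})$, each understood through the convergent combination $I_m$.

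The heart of the computation is a Leibniz expansion of $\partial_\alpha^k\partial_\beta^p T_1$. Writing $T_1=\sum_{n_1>\cdots>n_{m+2}>0}\big[(n_1-\alpha-\beta)\prod_{j=2}^{m+2}(n_j-\beta)\big]^{-1}$ and noting that $\alpha$ occurs only in the single factor $(n_1-\alpha-\beta)^{-1}$, the $k$ derivatives in $\alpha$ act on that factor alone while the $p$ derivatives in $\beta$ split over all $m+2$ factors; doing the multinomial bookkeeping and summing over $n_1>\cdots>n_{m+2}$ gives
\[
\frac{\partial^{k+p}}{\partial\alpha^k\partial\beta^p}T_1=k!\,p!\sum_{i_1+\cdots+i_{m+2}=p}\binom{i_1+k}{k}\zeta(i_1+k+1,i_2+1,\ldots,i_{m+2}+1;1-\alpha-\beta,\{1-\beta\}_{m+1}).
\]
In the same way $\partial_\beta^p T_2=p!\sum_{i_1+\cdots+i_{m+2}=p}\zeta(i_1+1,\ldots,i_{m+2}+1;\{1-\beta\}_{m+2})$. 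Since $T_2,T_4$ carry no $\alpha$ and $T_3,T_4$ carry no $\beta$, for $(k,p)\neq(0,0)$ the $T_4$-contribution vanishes; moreover when $p=0$ and $k\geq1$ one has $\partial_\alpha^k T_3=k!\sum_{n}\zeta_{n-1}(\{1\}_{m+1})/(n-\alpha)^{k+1}=k!\sum_n\binom{n+\alpha-1}{n}\zeta_n(\{1\}_k;\alpha)/n^{m+2}$ by \eqref{eq-have-mhs-1}.

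Assembling these pieces and equating the two expressions for $\partial_\alpha^k\partial_\beta^p I_m$, the term $\delta_{0,p}k!\sum_n\binom{n+\alpha-1}{n}\zeta_n(\{1\}_k;\alpha)/n^{m+2}$ cancels on both sides; dividing through by $k!\,p!$ leaves exactly the claimed identity \eqref{for-t-para-binom-t-mhss}, the summands $-\delta_{0,k}\zeta(\ldots;\{1-\beta\}_{m+2})$ on the right coming precisely from $-\partial_\beta^p T_2$ (present only when $k=0$). For the exceptional summand with $i_1=k=0$ the two values $\zeta(1,i_2+1,\ldots;1-\alpha-\beta,\{1-\beta\}_{m+1})$ and $\zeta(1,i_2+1,\ldots;\{1-\beta\}_{m+2})$ diverge individually, but their difference telescopes via $\tfrac1{n-\alpha-\beta}-\tfrac1{n-\beta}=\tfrac{\alpha}{(n-\alpha-\beta)(n-\beta)}$, producing the stated convergent series $\alpha\sum_n\zeta_{n-1}(i_2+1,\ldots,i_{m+2}+1;1-\beta)/\big((n-\beta)(n-\alpha-\beta)\big)$; the base case $(k,p)=(0,0)$ is the preceding theorem, and $m=-1$ is treated identically starting from \eqref{para-iter-int-1}.

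The main obstacle is the divergence bookkeeping: $T_1,\dots,T_4$, and some of their low-order derivatives, are individually divergent, so one must differentiate only the genuinely convergent combinations — $T_1-T_2$ when $k=0$ (because the $i_1=0$ summands diverge separately) and the full $I_m$ when $k=p=0$ — and keep careful track of which $\delta$-corrections cancel against which; the multi-index Leibniz expansion of $\partial_\alpha^k\partial_\beta^p T_1$, while routine, must also be executed with some care.
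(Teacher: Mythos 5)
Your proposal is correct and follows essentially the same route as the paper: the paper's proof is the one-line instruction to differentiate the preceding identity \eqref{exp-formul-t-para-binom} $k$ times in $\alpha$ and $p$ times in $\beta$ using \eqref{diff-para-binom-mhss} and \eqref{eq-diff-pcb}, and you carry out exactly that computation, merely organizing the right-hand side via the four-term decomposition $T_1-T_2-T_3+T_4$ from the preceding theorem's proof rather than the already-telescoped form. Your Leibniz bookkeeping, the cancellation of the $\delta_{0,p}$ terms via \eqref{eq-have-mhs-1}, and the handling of the divergent $i_1=k=0$ summands all match what the paper leaves implicit.
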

\begin{proof}
Differentiating \eqref{exp-formul-t-para-binom} $k$ times with respect to $\alpha$ and $p$ times with respect to $\beta$ yields the desired result with the help of \eqref{diff-para-binom-mhss} and \eqref{eq-diff-pcb}.
\end{proof}

If $(\alpha,\beta)\rightarrow (0,0)$ in \eqref{for-t-para-binom-t-mhss}, we obtain the well-known result of Arakawa--Kaneko zeta values \cite[Thm. 9(i)]{AM1999}.
Letting $(\alpha,\beta)\rightarrow (0,1/2), (1/2,1/2)$ and $(1/2,0)$, we get the following corollaries.
\begin{cor} For any integers $m\geq -1, k\geq 1$ and $p\geq 0$, we have
\begin{align}
&\sum\limits_{n=1}^{\infty}\frac{\zeta_{n-1}(\{1\}_{k-1})t_{n}^{\star}(\{1\}_{p})}{n^{m+3}}
\frac{4^{n}}{\binom{2n}{n}}\nonumber\\
&=2^{k+m+2}\sum\limits_{i_{1}+\cdots+i_{m+2}=p\atop i_{1}, \ldots, i_{m+2}\geq0}
\binom{i_{1}+k}{k} t(i_{1}+k+1,i_{2}+1,\ldots,i_{m+2}+1).
\end{align}
\end{cor}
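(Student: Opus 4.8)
The plan is to deduce this corollary from Theorem~\ref{thm-t-para-binom-t-mhss} by specializing $(\alpha,\beta)\to(0,1/2)$ in \eqref{for-t-para-binom-t-mhss}, in the same fashion as the earlier limiting corollaries of this type. The one genuinely delicate feature of this limit is that, on the left-hand side of \eqref{for-t-para-binom-t-mhss}, the factor $\binom{n+\alpha-1}{n}$ tends to $\binom{n-1}{n}=0$ as $\alpha\to0$ while $\zeta_n(\{1\}_k;\alpha)$ has a pole at $\alpha=0$, so one must pass to the limit in the product $\binom{n+\alpha-1}{n}\zeta_n(\{1\}_k;\alpha)$ as a whole; this is exactly what \eqref{Limit-mhns-para} provides, and it applies here because $k\ge1$.

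First I would set $\beta=1/2$ in \eqref{for-t-para-binom-t-mhss}. Then $\binom{n-\beta}{n}=\binom{n-1/2}{n}=\binom{2n}{n}/4^{n}$, so that $1/\binom{n-\beta}{n}=4^{n}/\binom{2n}{n}$, while from the definition of the Hurwitz-type multiple harmonic star sum one reads off $\zeta_n^\star(\{1\}_p;1-\beta)=\zeta_n^\star(\{1\}_p;1/2)=2^{p}\,t_n^\star(\{1\}_p)$, since each factor $1/(n_j-1/2)$ equals $2/(2n_j-1)$. Letting $\alpha\to0$ and invoking \eqref{Limit-mhns-para}, the $n$th summand of the left-hand side tends to $2^{p}\,\zeta_{n-1}(\{1\}_{k-1})\,t_n^\star(\{1\}_p)\,n^{-(m+3)}\,4^{n}/\binom{2n}{n}$, so the left-hand side converges to $2^{p}$ times the series appearing in the corollary.

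Next I would treat the right-hand side. Since $k\ge1$ we have $\delta_{0,k}=0$, so only the first Hurwitz-type multiple zeta value survives in \eqref{for-t-para-binom-t-mhss} and the exceptional case $i_1=k=0$ never arises. As $(\alpha,\beta)\to(0,1/2)$ both $1-\alpha-\beta$ and $1-\beta$ tend to $1/2$, and each value $\zeta(i_1+k+1,i_2+1,\dots,i_{m+2}+1;1-\alpha-\beta,\{1-\beta\}_{m+1})$ is continuous there (the tuple of arguments stays bounded away from $\N_0^-$), with limit $\zeta(i_1+k+1,i_2+1,\dots,i_{m+2}+1;\{1/2\}_{m+2})$. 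By the definition of the Hurwitz-type multiple zeta value with every argument equal to $1/2$, this equals $2^{w}\,t(i_1+k+1,i_2+1,\dots,i_{m+2}+1)$ with $w=(i_1+k+1)+\sum_{j=2}^{m+2}(i_j+1)=p+k+m+2$. Hence the right-hand side converges to $2^{p+k+m+2}\sum_{i_1+\dots+i_{m+2}=p}\binom{i_1+k}{k}\,t(i_1+k+1,i_2+1,\dots,i_{m+2}+1)$. Equating the two limits and cancelling the common factor $2^{p}$ gives the asserted identity.

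The main obstacle is the term-by-term passage to the limit $\alpha\to0$ inside the infinite series on the left. I would handle this by noting that, by \eqref{eq-diff-pcb}, $\binom{n+\alpha-1}{n}\zeta_n(\{1\}_k;\alpha)=\frac1{k!}\frac{d^k}{d\alpha^k}\binom{n+\alpha-1}{n}$ is a \emph{polynomial} in $\alpha$, so each summand of the left-hand side depends polynomially on $\alpha$ and its value at $\alpha=0$ is well-defined, equal to the expression furnished by \eqref{Limit-mhns-para}; moreover, the series converges uniformly on a small disc about $\alpha=0$, which follows from the Cauchy estimate applied to that polynomial together with the asymptotics $\binom{2n}{n}\sim 4^{n}/\sqrt{\pi n}$ and $|\binom{n+\alpha-1}{n}|=O(n^{\Re\alpha-1})$ recalled in the introduction (the general term is then $O\bigl((\log n)^{p}\,n^{-(m+2)-\varepsilon}\bigr)$ for a suitable $\varepsilon>0$, summable for every $m\ge-1$). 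Consequently both sides of \eqref{for-t-para-binom-t-mhss} are continuous at $\alpha=0$, the identity persists at $\alpha=0$, and one may evaluate each side there term by term.
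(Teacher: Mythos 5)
Your proposal is correct and follows exactly the paper's route: the paper obtains this corollary by letting $(\alpha,\beta)\to(0,1/2)$ in Theorem~\ref{thm-t-para-binom-t-mhss}, using \eqref{Limit-mhns-para} to handle the product $\binom{n+\alpha-1}{n}\zeta_n(\{1\}_k;\alpha)$ and the identification $\zeta(\bfk;\{1/2\}_r)=2^{|\bfk|}t(\bfk)$, precisely as you do. Your additional justification of the term-by-term passage to the limit is a welcome refinement that the paper omits entirely.
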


\begin{cor} For any integers $m, k,p\geq 0$, we have
\begin{align}
&2^{k+p}\sum\limits_{n=1}^{\infty}\frac{t_{n}(\{1\}_{k})t_{n}^{\star}(\{1\}_{p})}{n^{m+2}}\nonumber\\
&=\sum\limits_{i_{1}+\cdots+i_{m+2}=p\atop i_{j}\geq0,\forall j} 2^{p-i_{1}}\binom{i_{1}+k}{k}
\left\{ {\begin{array}{*{20}{c}}
\sum\limits_{n=1}^{\infty}\frac{t_{n}(i_{2}+1,\ldots,i_{m+2}+1)}{n^{i_{1}+k+1}}
   \ \ \ \ \ \ \ \ \ \ \ \ \ \ \ \ \ \ \ \ \ \ \ \ \ \;\;\; \;\;\;\;\ \ \ \ {k\geq 1,} \\
\sum\limits_{n=1}^{\infty}t_{n}(i_{2}+1,\ldots,i_{m+2}+1)\left(\frac{1}{n^{i_{1}+1}}
  -\frac{1}{(n-1/2)^{i_{1}+1}}\right) \;\;\;\;{k=0.}  \\
\end{array} } \right.
\end{align}
\end{cor}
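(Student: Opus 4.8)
The plan is to deduce the corollary by specializing Theorem~\ref{thm-t-para-binom-t-mhss} (formula~\eqref{for-t-para-binom-t-mhss}) to $\alpha=\beta=\tfrac12$, at which point every Hurwitz-type object collapses to a multiple $t$-type object. First I would simplify the left-hand side of~\eqref{for-t-para-binom-t-mhss}: for $\alpha=\beta=\tfrac12$ the two binomial coefficients coincide, $\binom{n+\alpha-1}{n}=\binom{n-\beta}{n}=\binom{n-1/2}{n}=\binom{2n}{n}/4^n$, so their quotient is $1$, and directly from the definitions of the Hurwitz-type harmonic (star) sums one has $\zeta_n(\{1\}_k;\tfrac12)=2^k\,t_n(\{1\}_k)$ and $\zeta_n^\star(\{1\}_p;\tfrac12)=2^p\,t_n^\star(\{1\}_p)$. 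Hence the left-hand side becomes $2^{k+p}\sum_{n\ge 1} t_n(\{1\}_k)\,t_n^\star(\{1\}_p)/n^{m+2}$, exactly the left-hand side of the corollary.

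Next I would reduce the right-hand side of~\eqref{for-t-para-binom-t-mhss} summand by summand. At $\alpha=\beta=\tfrac12$ the leading Hurwitz parameter becomes $1-\alpha-\beta=0$ while the others are $1-\beta=\tfrac12$. Since $m\ge 0$, the depth $m+2$ is at least $2$, so in $\zeta(i_1+k+1,i_2+1,\dots,i_{m+2}+1;0,\{1/2\}_{m+1})$ the index range $n_1>n_2>\dots>n_{m+2}>0$ forces $n_1\ge 2$, and the parameter $0$ in the leading slot is harmless. Writing each of the remaining factors as $n_j-\tfrac12=\tfrac12(2n_j-1)$ and then shifting $n_1\mapsto n_1-1$ turns the inner summation into a truncated multiple $t$-sum $t_{n_1-1}(i_2+1,\dots,i_{m+2}+1)$ times a power of $2$; relabelling then identifies $\zeta(i_1+k+1,i_2+1,\dots;0,\{1/2\}_{m+1})$ with a constant multiple of $\sum_{n\ge1} t_n(i_2+1,\dots,i_{m+2}+1)/n^{i_1+k+1}$. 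Inserting this identification, together with the binomial $\binom{i_1+k}{k}$ and the power of $2$, and summing over $i_1+\dots+i_{m+2}=p$, gives the $k\ge1$ branch of the corollary.

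For $k=0$ the term $-\delta_{0,k}\,\zeta(i_1+1,\dots,i_{m+2}+1;\{1/2\}_{m+2})$ in~\eqref{for-t-para-binom-t-mhss} is active. Applying the same $n_j-\tfrac12=\tfrac12(2n_j-1)$ reduction to this second Hurwitz-type value (now in all $m+2$ slots, with no shift in the leading one) and subtracting, the bracket $\zeta(\cdots;0,\{1/2\}_{m+1})-\zeta(\cdots;\{1/2\}_{m+2})$ collapses, after a reindexing, to a combination of $t$-harmonic sums carrying the extra factor $n^{-(i_1+1)}-(n-\tfrac12)^{-(i_1+1)}$, i.e.\ the $k=0$ branch of the statement; in the borderline subcase $i_1=k=0$ one invokes instead the closed form already recorded in the statement of Theorem~\ref{thm-t-para-binom-t-mhss}. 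The step that needs the most care, and the main obstacle, is precisely this $k=0$ reduction: one must align the leading slot carrying the parameter $0$ (which after the shift runs over $n_1-1\ge n_2$) with the slot carrying $\tfrac12$ (which runs over $n_1>n_2$), so the two expansions merge with a common truncation level, and keep the bookkeeping of powers of $2$ consistent. The hypothesis $m\ge0$ (rather than $m\ge-1$ as in Theorem~\ref{thm-t-para-binom-t-mhss}) is precisely what guarantees $n_1\ge2$ and hence that the parameter $0$ is admissible in these Hurwitz-type values.
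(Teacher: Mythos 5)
Your overall route is exactly the paper's: the authors obtain this corollary with no more argument than the remark that one sets $(\alpha,\beta)=(1/2,1/2)$ in Theorem~\ref{thm-t-para-binom-t-mhss}, and your reduction of the left-hand side is correct --- the two binomial coefficients cancel, and $\zeta_n(\{1\}_k;\tfrac12)=2^k t_n(\{1\}_k)$, $\zeta_n^\star(\{1\}_p;\tfrac12)=2^p t_n^\star(\{1\}_p)$ give $2^{k+p}\sum_{n\ge1} t_n(\{1\}_k)t_n^\star(\{1\}_p)/n^{m+2}$.

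The gap is in the step you pass over as ``the power of $2$''. On the right-hand side of \eqref{for-t-para-binom-t-mhss} there are $m+1$ slots carrying the parameter $1-\beta=\tfrac12$, and each contributes a factor $2^{i_j+1}$ when you write $n_j-\tfrac12=\tfrac12(2n_j-1)$; hence $\zeta(i_1+k+1,i_2+1,\ldots,i_{m+2}+1;0,\{1/2\}_{m+1})=2^{(p-i_1)+(m+1)}\sum_{n\ge1}t_n(i_2+1,\ldots,i_{m+2}+1)/n^{i_1+k+1}$, with exponent $(p-i_1)+(m+1)$ rather than the $p-i_1$ in the statement you are asked to prove. Concretely, at $m=0$, $k=1$, $p=0$ the printed identity reads $2\sum_n t_n(1)/n^2=\sum_n t_n(1)/n^2$, which is false, so the assertion ``this gives the $k\ge1$ branch'' cannot be carried out as written: the identity your (correct) method produces differs from the printed one by a factor $2^{m+1}$. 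The $k=0$ branch has the analogous problem plus two more that your sketch glosses over: the subtracted value $\zeta(i_1+1,\ldots;\{1/2\}_{m+2})$ unpacks to $2^{(p-i_1)+(m+1)}\sum_{n\ge1}t_{n-1}(i_2+1,\ldots,i_{m+2}+1)/(n-\tfrac12)^{i_1+1}$ with truncation $t_{n-1}$, not $t_n$, so the bracket does not collapse to a single $t_n$ multiplied by $n^{-(i_1+1)}-(n-\tfrac12)^{-(i_1+1)}$; and the $i_1=k=0$ closed form of the theorem specializes to $\tfrac12\sum_{n}\zeta_{n-1}(i_2+1,\ldots;\tfrac12)/\bigl((n-\tfrac12)(n-1)\bigr)$, which after reindexing produces $n^{-1}-(n+\tfrac12)^{-1}$, not $n^{-1}-(n-\tfrac12)^{-1}$. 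So the method is the right one (and the paper's), but completing the proof requires actually tracking these constants, and doing so yields an identity that does not coincide with the statement as printed.
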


\begin{cor} For any integers $m\geq -1, k\geq 1$ and $p\geq 0$, we have
\begin{align}
 &2^{k}\sum\limits_{n=1}^{\infty}\frac{t_{n}(\{1\}_{k})\zeta_{n}^{\star}(\{1\}_{p})}{n^{m+2}}
\frac{\binom{2n}{n}}{4^{n}} \nonumber\\
&=\sum\limits_{i_{1}+\cdots+i_{m+2}=p\atop i_{j}\geq0,\forall j}\binom{i_{1}+k}{k}
\left\{ {\begin{array}{*{20}{c}}
\sum\limits_{n=1}^{\infty}\frac{\zeta_{n-1}(i_{2}+1,\ldots,i_{m+2}+1)}{(n-1/2)^{i_{1}+k+1}}
   \ \ \ \ \ \ \ \ \ \ \ \ \ \ \ \ \ \ \ \ \ \ \ \ \ \;\;\; \;\;\;\;\ \ \ \ {k\geq 1,} \\
\sum\limits_{n=1}^{\infty}\zeta_{n-1}(i_{2}+1,\ldots,i_{m+2}+1)\left(\frac{1}{(n-1/2)^{i_{1}+1}}
-\frac{1}{n^{i_{1}+1}}\right) \;\;\;\;{k=0.}
\end{array} } \right.
\end{align}
\end{cor}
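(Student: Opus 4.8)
The plan is to derive this corollary by specializing Theorem~\ref{thm-t-para-binom-t-mhss} at $(\alpha,\beta)=(1/2,0)$; unlike the $\alpha\to0$ corollaries above, no limiting argument is needed here, since $1/2\notin\mathbb{N}_{0}^{-}$ and $\Re(0)<1$, so \eqref{for-t-para-binom-t-mhss} applies verbatim. First I would record the elementary substitutions that govern the left-hand side: $\binom{n-1/2}{n}=\binom{2n}{n}/4^{n}$, $\binom{n}{n}=1$, and, directly from the definition \eqref{HTMHSs},
\[
\zeta_{n}(\{1\}_{k};1/2)=\sum_{n\ge n_{1}>\cdots>n_{k}>0}\ \prod_{j=1}^{k}\frac{1}{n_{j}-1/2}=2^{k}\,t_{n}(\{1\}_{k}),\qquad \zeta_{n}^{\star}(\{1\}_{p};1)=\zeta_{n}^{\star}(\{1\}_{p}).
\]
Feeding these into the left-hand side of \eqref{for-t-para-binom-t-mhss} turns it into $2^{k}\sum_{n\ge1}t_{n}(\{1\}_{k})\zeta_{n}^{\star}(\{1\}_{p})\binom{2n}{n}/(n^{m+2}4^{n})$, which is precisely the series on the left of the claimed identity. (Convergence of the specialized series is guaranteed since $\Re(\beta)=0<1$.)

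Next I would rewrite the right-hand side. With $\alpha=1/2,\ \beta=0$ the parameter vectors become $(1-\alpha-\beta,\{1-\beta\}_{m+1})=(1/2,\{1\}_{m+1})$ and $\{1-\beta\}_{m+2}=\{1\}_{m+2}$. Peeling off the outermost summation index in the nested-sum definition \eqref{HTMZVs+HTMZSVs-r} of a Hurwitz-type multiple zeta value with $r$ variables gives, for each multi-index $(i_{1},\dots,i_{m+2})$ with $i_{1}+\cdots+i_{m+2}=p$,
\[
\zeta\bigl(i_{1}+k+1,i_{2}+1,\dots,i_{m+2}+1;\tfrac12,\{1\}_{m+1}\bigr)=\sum_{n=1}^{\infty}\frac{\zeta_{n-1}(i_{2}+1,\dots,i_{m+2}+1)}{(n-1/2)^{i_{1}+k+1}},
\]
and likewise $\zeta(i_{1}+k+1,i_{2}+1,\dots,i_{m+2}+1)=\sum_{n\ge1}\zeta_{n-1}(i_{2}+1,\dots,i_{m+2}+1)/n^{i_{1}+k+1}$. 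For $k\ge1$ the term $\delta_{0,k}$ vanishes and one reads off the $k\ge1$ branch of the corollary immediately. For $k=0$ the second value must be subtracted from the first before either is evaluated in isolation (they diverge individually exactly when $i_{1}=k=0$), and Theorem~\ref{thm-t-para-binom-t-mhss} already supplies the regularized difference as $\alpha\sum_{n\ge1}\zeta_{n-1}(i_{2}+1,\dots,i_{m+2}+1)/((n-\beta)(n-\alpha-\beta))$; at $(\alpha,\beta)=(1/2,0)$ this kernel equals $\tfrac12/(n(n-1/2))=\tfrac1{n-1/2}-\tfrac1n$, which yields the $k=0$ branch.

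The one delicate point is the $k=0$, $i_{1}=0$ summand, where the two Hurwitz-type zeta values are individually divergent and only their difference is meaningful; I would not re-derive that difference but simply invoke the convergent expression already recorded in Theorem~\ref{thm-t-para-binom-t-mhss}. Everything else is a routine bookkeeping rewrite of nested sums, and assembling the two branches completes the proof.
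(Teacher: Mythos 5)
Your proposal is correct and is exactly the paper's own route: the paper obtains this corollary by specializing Theorem \ref{thm-t-para-binom-t-mhss} at $(\alpha,\beta)=(1/2,0)$, and your substitutions ($\binom{n-1/2}{n}=\binom{2n}{n}/4^n$, $\zeta_n(\{1\}_k;1/2)=2^k t_n(\{1\}_k)$, and the unraveling of the Hurwitz-type values with parameter vector $(1/2,\{1\}_{m+1})$, including the regularized $i_1=k=0$ difference) are precisely the bookkeeping the paper leaves implicit. The only remark worth adding is that the stated hypothesis $k\ge 1$ in the corollary sits awkwardly beside its $k=0$ branch, but that is an inconsistency in the paper's statement rather than in your argument.
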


\medskip\noindent
{\bf Question.} Let $r,p$ and $m$ be nonnegative integers. Is it true that for any two compositions of positive integers $\bfk=(k_1,\ldots,k_r)\in \N^r$ and $\bfl=(l_1,\ldots,l_p)\in\N^p$, the following parametric Ap\'{e}ry-type series
\begin{align}
\sum\limits_{n=1}^{\infty}\frac{\zeta_{n}(k_1,\ldots,k_r;\alpha)\zeta_{n}^{\star}(l_1,\ldots,l_p;1-\beta)}{n^{m+2}}
\frac{\binom{n+\alpha-1}{n}}{\binom{n-\beta}{n}}
\end{align}
can be expressed by a linear combination of Hurwitz-type multiple zeta values?

\medskip
{\bf Acknowledgments.}  Masanobu Kaneko is supported by the JSPS KAKENHI Grant Numbers JP16H06336, JP21H04430. Weiping Wang is supported by the Zhejiang Provincial Natural Science Foundation of China (Grant No. LY22A010018). The corresponding author Ce Xu is supported by the National Natural Science Foundation of China (Grant No. 12101008), the Natural Science Foundation of Anhui Province (Grant No. 2108085QA01) and the University Natural Science Research Project of Anhui Province (Grant No. KJ2020A0057). Jianqiang Zhao is supported by the Jacobs Prize from The Bishop's School.

\medskip

{\bf Data Availability Statement.} There is no data associated with the paper.

\medskip
{\bf Declarations of competing interest}: none.

\end{document}